\let\epsilon\varepsilon
\numberwithin{equation}{section}
\theoremstyle{definition}
\newtheorem{assumption}{Assumption}[section]
\theoremstyle{plain}
\newtheorem{theorem}{Theorem}[section]
\newtheorem{lemma}[theorem]{Lemma}
\newtheorem{cor}[theorem]{Corollary}
\newcommand{\bbR} {\mathbb{R}}
\newcommand{\bbP} {\mathbb{P}}
\newcommand{\R}{\mathbb{R}}
\newcommand{\N}{\mathbb{N}}
\newcommand{\calP}{\mathcal{P}}
\newcommand{\QOT}{\texttt{QOT}}
\newcommand{\OT}{\texttt{OT}}
\newcommand{\rmd}{\mathrm{d}}
\DeclareMathOperator{\spt}{spt}
\DeclareMathOperator{\interior}{int}
\DeclareMathOperator{\Diam}{diam}
\DeclareMathOperator{\dist}{dist}
\DeclareMathOperator{\gr}{gr}
\begin{document}
\title{Sparsity of Quadratically Regularized Optimal Transport: Bounds on concentration and bias}
\author{Johannes Wiesel\footnote{Department of Mathematics, Carnegie Mellon University, e-mail:\,wiesel@cmu.edu. JW acknowledges support by NSF Grant DMS-2345556. JW would like to thank the Sydney Mathematical Research Institute for their hospitality.} \and Xingyu Xu\footnote{Department of Electrical and Computer Engineering, Carnegie Mellon University, e-mail:\,xingyuxu@andrew.cmu.edu. XX acknowledges support by NSF Grant ECCS-2126634.} }

\maketitle

\begin{abstract}
We study the quadratically regularized optimal transport (QOT) problem for quadratic cost and compactly supported marginals $\mu$ and $\nu$. It has been empirically observed that the optimal coupling $\pi_\epsilon$ for the QOT problem has sparse support for small regularization parameter $\epsilon>0.$ In this article we provide the first quantitative description of this phenomenon in general dimension: we derive bounds on the size and on the location of the support of $\pi_\epsilon$ compared to the Monge coupling. Our analysis is based on pointwise bounds on the density of $\pi_\epsilon$ together with Minty's trick, which provides a quadratic detachment from the optimal transport duality gap. In the self-transport setting $\mu=\nu$ we obtain optimal rates of order $\epsilon^{\frac{1}{2+d}}.$
\end{abstract}

\noindent\textit{Keywords and phrases.} Optimal Transport; Quadratic Regularization, Sparsity, Minty's trick.\\
\textit{AMS 2020 Subject Classification} 49N10; 49N05; 90C25

\section{Introduction and main results}

The optimal transport problem with quadratic cost is 
defined as
\begin{equation*}
C(\mu,\nu):= \inf_{\pi \in \Pi(\mu, \nu)}\int \frac{1}{2} \|x-y\|^2 \,\pi(\rmd x, \rmd y).\quad 
\eqno{(\OT)}
\end{equation*}
Here  $\mu,\nu$ are probability measures on $\R^d$ and $\Pi(\mu,\nu)$ denotes the set of couplings of $\mu$ and $\nu$. In this article we study a regularized version of $C(\mu,\nu)$ given by 
\begin{equation*}
  \inf_{\pi \in \Pi(\mu, \nu)}\int \frac{1}{2}\|x-y\|^2 \, \rmd\pi + \frac{\varepsilon}{2} \left\| \frac{\rmd \pi}{\rmd \bbP} \right\|^2_{L^2(\bbP)},  \quad \eqno{(\QOT)}
\end{equation*}
where $\bbP:=\mu \otimes \nu$, the product measure of $\mu$ and $\nu$, and $\epsilon>0$. $(\QOT)$ is called a \emph{quadratically regularized optimal transport problem}. If  $(\QOT)$ is finite, then the unique optimizer $\pi_\epsilon$ has a density with respect to $\bbP$, and it is well known (see e.g.,\,\cite{nutz2024quadratically}), that 
\begin{align}\label{eq:optimal_density}
 \frac{\rmd \pi_{\varepsilon}}{\rmd \bbP}(x,y) := \frac{1}{\varepsilon} [ f_{\varepsilon}(x)+ g_{\varepsilon}(y) - c(x,y)]_+
\end{align}
holds for some continuous functions $f_\epsilon, g_\epsilon:\R^d\to \R$ and $c(x,y):=\|x-y\|^2/2$. We denote the support of $\pi_\epsilon$ by
\begin{equation}
\label{eqn: def spt}
\spt \pi_{\varepsilon} := \overline{\left\{ (x, y) \in \spt \bbP: f_{\varepsilon}(x) + g_{\varepsilon}(y) > c(x,y) \right\}}.
\end{equation}
On the other hand, if we assume that $\mu$ is absolutely continuous with respect to the Lebesgue measure, then Brenier's theorem yields existence of a unique optimizer $\pi_\star$ for $(\OT)$, and $$\spt \pi_\star \subseteq \{ (x, \nabla \varphi(x)): x\in \R^d\}$$ for the convex Brenier potential $\varphi:\R^d \to \R$.

It is natural to expect that $\pi_\epsilon\approx \pi_\star$ for small $\epsilon$; in particular $\spt \pi_\epsilon$ is sparse. Indeed, \cite[Theorem 4.1]{nutz2024quadratically} recently established Hausdorff convergence of $\spt \pi_\epsilon$ to $\spt \pi_\star$ for $\epsilon\to 0.$ The main goal of this article is to quantify this convergence.  More precisely we answer the following question:

\begin{tcolorbox}
How fast does $\spt \pi_{\varepsilon}$ concentrate around $\spt \pi_{\star} $, or in other words: can we find upper bounds on the size of $\spt \pi_{\varepsilon}$ and the distance of $\spt \pi_{\varepsilon}$ to $\spt \pi_{\star}$?
\end{tcolorbox}

As anticipated, our two main results comprise a pointwise bound on the concentration of $\spt \pi_\epsilon$ and a pointwise bound on its bias (i.e. its distance to $\spt \pi_{\star}$). These are the first quantitative bounds on the support of $ \pi_\epsilon$ in general dimension. They can be informally summarized as follows: define the \emph{$\epsilon$--spread of $\mu$} by
\begin{align*}
\delta(\varepsilon) \coloneqq \inf\{r>0 : r \cdot \rho(r) > \varepsilon\}, \quad 
  \rho(r) \coloneqq \inf_{x \in \spt\mu} \mu(B(x, r)).
\end{align*}
Intuitively, $\delta(\epsilon)$ captures how uniformly the probability mass of $\mu$ is spread over its support. In particular, for $\mu=\text{Unif}(B_{1}(0))$ we have $\delta(\epsilon)\asymp \epsilon^{\frac{1}{1+d}}.$ With this definition at hand, Theorem \ref{thm: concentration} in Section \ref{sec:general} states the \emph{concentration bound}
\begin{align}\label{eq:intro1}
\sup_{(x,y)\in \spt\pi_\epsilon} \inf_{x'\in \R^d} \| (x,y)-(x', \nabla\psi_\epsilon(x'))\|\le C\sqrt{\delta(\epsilon)}.
\end{align}
Here $\psi_\epsilon$ is a convex continuously differentiable function, which is explicitly constructed in Section \ref{sec:prep} below. Paraphrasing \eqref{eq:intro1}, $\spt \pi_\epsilon$ is concentrated around the graph of the gradient of a convex function. Under the assumption that $\spt \mu$ is star-shaped, this result can be improved: Theorem \ref{thm: bdry ub} states the \emph{bias bound}
\begin{align}\label{eq:intro2}
   \sup_{(x,y)\in \spt\pi_\epsilon} \| y - \nabla \varphi(x) \| \le C (L+1)^{3/2} 
\sqrt[4]{\delta\Big(\frac{\delta(\varepsilon)}{L+1}\Big)}, 
\end{align}
where $L$ is the Lipschitz-constant of $\nabla \varphi$.
These rates hold under mild assumptions on $\mu, \nu$ and explicitly describe the interplay between $\mu,\nu$ and the concentration of $\spt\pi_\epsilon$ through the $\epsilon$--spread $\delta(\epsilon)$ and the Lipschitz constant $L$. As noted above, we typically expect $\delta(\epsilon)\asymp \epsilon^{\frac{1}{1+d}}$ if $\mu$ is comparable to $\text{Unif}(B_{1}(0))$.

In the self-transport setting $\mu=\nu$ our results can be significantly improved. In fact, Theorems \ref{thm: sym ub} and \ref{thm: sym lb} in Section \ref{sec:self-transport} state the sharp upper and lower bounds
$$ \sup_{(x, y) \in \spt\pi_\epsilon} \| y - \nabla \varphi(x) \|
\asymp \delta_{\mathsf{ST}}^{1/2}(\epsilon),$$
where
\[
\delta_{\mathsf{ST}}(\varepsilon) \coloneqq \inf\{r>0 : r \cdot \rho(\sqrt r) > \varepsilon\}, 
\quad 
\rho(r) \coloneqq \inf_{x \in \spt\mu} \mu(B(x, r))
\]
is the \emph{improved $\epsilon$--spread}. Taking again $\mu$ comparable to $\text{Unif}(B_{1}(0))$ yields \emph{optimal rates} of order $\epsilon^{\frac{1}{2+d}}$ (see Corollary \ref{rem:rate} for an exact statement).

\definecolor{azure}{rgb}{0.0, 0.5, 1.0}
\definecolor{darkpastelgreen}{rgb}{0.01, 0.75, 0.24}

\begin{figure}
\centering
\begin{tikzpicture}
    \draw[->] (0,0) -- (6.7,0) node[right] {$x$};
    
    \draw[->] (0,0) -- (0, 5.2) node[above] {$y$};
    
    \draw[color=azure, line width=0.25mm] (0,0) .. controls (1.0, 1.75) and (4, 1.85) .. (5.5, 3) node[right]{$x\mapsto \nabla\psi_\epsilon(x)$};

    \draw[color=azure, dashed, line width=0.25mm] (0,0+0.3) .. controls (1.0, 1.75+0.3) and (4, 1.85+0.3) .. (5.5, 3+0.3);

    \draw[color=azure, dashed, line width=0.25mm] (0.3, 0) .. controls (1.0, 1.75-0.3) and (4, 1.85-0.3) .. (5.5, 3-0.3);

    \draw[color=black, line width=0.25mm] (0,0) .. controls (1.75, 0.25) and (2.5, 3.55) .. (5.2, 4) node[right]{$x\mapsto \nabla\varphi(x)$};

    \draw[color=darkpastelgreen, dashed, line width=0.25mm] (0,0+2) .. controls (1.25, 3) and (1.75, 5) .. (4.5, 5.5);

    \draw[color=darkpastelgreen, dashed, line width=0.25mm] (0+2.5,0) .. controls (1.75+2.25, 0.25) and (2.5+1.5, 2.15) .. (4.75+1, 2.55);

    \newcommand{\ubpath}{(0,0+0.15) .. controls (1.0, 1.75+0.15) and (4, 1.85+0.15) .. (5.5, 3+0.15)}
    \newcommand{\lbpath}{(0.15,0) ..  controls (1.0, 1.75-0.15)  and (4, 1.85-0.15)  ..  (5.5, 3-0.15)}

    \begin{scope}
        \clip \ubpath -- (6.5, 3+0.15) -- (6.5, 0) -- (0, 0) -- cycle;
        \fill[azure, opacity=0.4] (0,0) -- \lbpath -- (5.5,4) -- (0, 4) -- cycle;
    \end{scope}
\end{tikzpicture}
\caption{Plot of $\spt \pi_\star=\{(x,\nabla\varphi(x))\}$ (black), $\spt \pi_\epsilon$ (blue) and $x\mapsto \nabla \psi_\epsilon(x)$ (blue) with bounds of order $\sqrt{\delta(\epsilon)}$ (dashed blue) and  $\delta(\delta(\epsilon)/(L+1))$ (dashed green).}
\label{fig:1}
\end{figure}
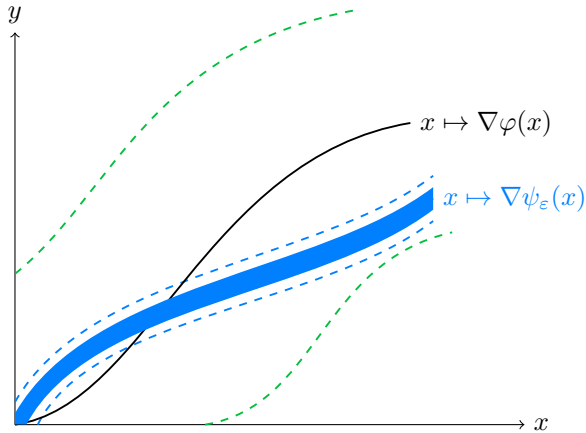

Figure \ref{fig:1} gives a pictorial description of our results: while the inclusion $\spt \pi_\star\subseteq \spt\pi_\epsilon$ remains an open research problem (see \cite{gonzalez2024monotonicity} for a recent counterexample in a discrete setting), our results give pointwise upper bounds on the distance of $\spt \pi_\epsilon$ and $\spt \pi_\star,$ which naturally inform the choice of $\epsilon$ for a specific pair of measures $(\mu,\nu)$.

\subsection{Related literature}

Quadratic regularization of optimal transport problems goes back at least to \cite{blondel2018smooth, essid2018quadratically} for finitely supported probability measures. For continuous measures $\mu,\nu$, first results are obtained in \cite{lorenz2022orlicz, lorenz2021quadratically}. QOT is empirically known to yield optimal couplings $\pi_\epsilon$ with sparse support. This is in stark contrast to the entropically regularized optimal transport (EOT) problem (see e.g.,\,\cite{cuturi2013sinkhorn, genevay2019sample, mena2019statistical, ghosal2022stability, bayraktar2022stability, nutz2021introduction, peyre2019computational} and the references therein), for which optimal couplings always have full support, even for very small $\epsilon>0.$ In some instances, such a full support property can be undesirable, and can lead to numerical instabilities, see e.g.,\,\cite{blondel2018smooth} for a discussion in the context of image processing. As \eqref{eq:optimal_density} is of linear growth, it often allows for significantly smaller regularization parameters $\epsilon>0$ compared to EOT (for which the optimal density grows exponentially fast). Recent applications of optimal transport with quadratic regularization include manifold learning \cite{zhang2023manifold} and learning of Wasserstein barycenters \cite{li2020continuous}.

The aim of this paper is to quantify the convergence of $(\QOT)$ to the $(\OT)$ problem for $\epsilon\to 0$. One of the first papers addressing this issue is \cite{lorenz2022orlicz}, who establish a Gamma convergence result. More recently, 
\cite{eckstein2024convergence} give quantitative rates for the optimal transport costs: they show that 
\begin{align*}
    C^{-1} \epsilon^{\frac{2}{2+d}} \le C_\epsilon(\mu,\nu)-C(\mu,\nu)\le C\epsilon^{\frac{2}{2+d}},
\end{align*}
while \cite{garriz2024infinitesimal} derive exact first-order asymptotics for $[C_\epsilon(\mu,\nu)-C(\mu,\nu)]/\epsilon^{\frac{2}{2+d}}.$ On the other hand, \cite{nutz2024quadratically} proves  convergence of $f_\epsilon$, $g_\epsilon$ to the Kantorovich potentials $f_\star, g_\star$.
Taking these results into account, it is natural to conjecture that optimal rates for $\|y-\nabla \varphi(x)\|$ are of order $\epsilon^{\frac{1}{d+2}}$ for all $(x,y)\in \spt\pi_\epsilon.$ However, to the best of the authors' knowledge, no quantitative results are available in the literature, and this paper is the first to offer an approach that fills this important gap.

We prove our main results by first approximating $\|\cdot\|^2/2-f_\epsilon, \|\cdot\|^2/2 - g_\epsilon$ by the convex function $\psi_\epsilon$ and its convex conjugate $\psi_\epsilon^*$, and then use Minty's trick \cite{minty1962monotone} to derive a quadratic detachment of the duality gap $\psi_\epsilon(x)+\psi_\epsilon^*(y)-\langle x,y\rangle$ from the set $\{(x,y)\in \R^d\times \R^d: \psi_\epsilon(x)+\psi_\epsilon^*(y)-\langle x,y\rangle=0\}.$ In short, Minty's trick says that the graph of a monotone operator is a $1$-Lipschitz graph in a rotated chart (see also Section \ref{sec:prep} for a more detailed discussion). Applying it in the context of optimal transport with quadratic cost is not new, and goes back at least to \cite{alberti1999geometrical}, see also \cite{mccann2012rectifiability, carlier2023convergence}.

While completing the first version of this paper, we learned about concurrent and ongoing research by Alberto Gonz\'alez-Sanz and Marcel Nutz who kindly shared their results. Their work has the same goal of bounding the size of the support of $\pi_\epsilon$ and the distance to the Monge map $\nabla \varphi $ for small $\epsilon$, but focuses on the one-dimensional setting $d=1$. Assuming that $\mu,\nu $ have continuous densities bounded from above and below, they prove that the thickness of the image of $x$ under (\QOT{}) is of order $\epsilon^{1/3}$, uniformly in $x\in \spt\mu$. Furthermore, they show that the Hausdorff distance between the images of $x$ under (\OT{}) and (\QOT{}) is of the same order $\epsilon^{1/3}$ (in an $L^2$ sense). Their analysis is based on the study of the shape of the potentials $f_\epsilon, g_\epsilon$. In particular they show that for small $\epsilon$, $\|\cdot\|^2/2-f_\epsilon$ is strongly convex, uniformly in $\epsilon.$ Such a uniform regularity result is naturally much harder to obtain in the multivariate case, and crucially depends on the assumption that $\mu,\nu$ have densities bounded from above and below. In that sense, our results are more general than theirs.
 
\subsection{Notation}

We work on $(\R^d, \|\cdot\|)$, where $d\in \N$. We write $\langle \cdot, \cdot \rangle$ for the scalar product on $\R^d.$ For a set $A\subseteq \R^d$ we use $\partial A$ for the topological boundary of $A$ and write $\mathrm{int} A $ for the interior of $A$. We also define $B(x, \epsilon)$ as the open ball of radius $\epsilon$ around a point $x$ and set $d(x,A):= \inf_{y\in A} \|x-y\|$ as well as $\Diam(A)=\sup_{x,y\in A} \|x-y\|.$ We denote by $\mathbf{1}_A$ the indicator function of the set $A$ and set $x_+:= \max(0,x)$ for $x\in \R.$ The notation $\text{Leb}$ is reserved for the Lebesgue measure on $\R^d$, while we write $f_\sharp\mu$ for the push-forward measure of a Borel probability measure $\mu\in \mathcal{P}(\R^d)$ under a measurable function $f:\R^d\to \R.$
Throughout this note we denote constants by $C$, which might change from line to line. The notation $x\asymp y$ means that there exist constants $c,C>0$ such that $cx\le y\le Cx$ holds. We sometimes write $x\lesssim y$ (resp. $x\gtrsim y$), meaning that there exists a constant $C>0$ such that $x\le Cy$ (resp. a constant $c>0$ such that $x\ge cy$).

To shorten notation we define $c(x,y):= \|x-y\|^2/2.$ If a function $f:\R^d\to \R$ is convex, then we denote by $\partial f(x):= \{z\in \R^d: f(y)-f(x)\ge \langle z, y-x\rangle \, \forall x,y \in \R^d\}$ the subdifferential of $f$.  We use the same notation for the superdifferential of $f$ if $f$ is concave and write $$\partial_x f(x,y):= \{z\in \R^d: f(x',y)-f(x,y)\ge \langle z, x'-x\rangle \quad  \forall x,x' \in \R^d\}$$ for the $x$-sub/superdifferential of a convex/concave function $f:\R^d\times \R^d\to \R.$ If a function $f:\R^d \to \R$ is differentiable in a point $x\in \R^d$, then we write $\nabla f(x)$ for its derivative in $x$. The convex conjugate of a function $f:\R^d\to \R$ is given by $f^*(y):=\sup_{x\in \R^d} [\langle x,y\rangle -f(x)].$ The graph of a function $f:\R^d\to \R^d$ is denoted by $\gr f\coloneqq \{(x, f(x)): x \in \bbR^d\}$.

We quantify sparsity of $\spt\pi_\epsilon$ by its distance to $\spt \pi_\star$. For this it is useful to define the \emph{asymmetric Hausdorff distance} between two sets $A, B \subseteq \bbR^d\times \bbR^d$ as
\begin{align} \label{eq:hausdorff}
\dist(A; B) \coloneqq \inf\{\epsilon > 0: A \subseteq B + B(0, \epsilon)\} = \sup_{x \in A} \inf_{y \in B} \|x - y\|,
\end{align} 
where $+$ denotes Minkowski sum of sets and $\|\cdot\|$ is the product norm (by a slight abuse of notation). If $\dist(\spt\pi_\epsilon; \spt\pi_\star)$ is small, this indicates that $\spt\pi_\epsilon$ is contained in a ``sausage" around $\spt\pi_\star$; in other words $\spt\pi_\epsilon$ is sparse.

\subsection{Overview}
The remainder of this article is organized as follows. We give an overview of the $(\QOT)$  problem and some preliminary results in Section \ref{sec:preliminary}. We then derive matching upper and lower bounds for $\dist(\spt\pi_\epsilon; \spt\pi_\star)$ in the self-transport case $\mu=\nu$ in Section \ref{sec:self-transport}. Sections \ref{sec:prep} and  \ref{sec:general}
are devoted to the general case $\mu\neq \nu$. We first show that dual potentials are approximately conjugate in Section \ref{sec:prep} and then use Minty's trick to turn bounds on the dual gap into bounds on the support of $\spt \pi_\epsilon.$ 

\section{Setting and preliminary results}\label{sec:preliminary}

\subsection{Assumptions on $\mu,\nu$}
We work with compactly supported probability measures $\mu$ and $\nu$ on $\R^d$. More specifically we make the following standing assumption on $\mu$.
\begin{assumption}
The probability measure $\mu \in \calP(\bbR^d)$ satisfies $\mu \ll \text{Leb}$, $\spt\mu \subseteq B(0, 1)$ and $\interior\spt\mu$ is connected.
\end{assumption}
This assumption is mostly needed to guarantee existence of the Brenier potential $\varphi: \bbR^d \to \bbR$, i.e. a convex function satisfying
\[\nu = (\nabla \varphi)_\sharp \mu.\] 
We call $\nabla \varphi$ the Monge map for the quadratic cost
\[c(x, y) = \frac12 \|x-y\|^2.\]
To simplify notation, we also make the following standing assumption on $\nu.$
\begin{assumption}\label{ass:nu}
The probability measure $\nu \in \calP(\bbR^d)$ satisfies $\spt\nu \subseteq B(0,1)$.
\end{assumption}
Note that Assumption \ref{ass:nu} can equivalently be stated as $\|\nabla \varphi\| \le 1$ almost surely in $\mu$.

\subsection{Preliminary results}

Recall that the optimal transference plan for $(\OT)$ is given by $\pi_\star = (\operatorname{id}, \nabla\varphi)_\sharp \mu$, and the Kantorovich potentials are
\[
f_\star(x) := \frac12 \|x\|^2 - \varphi(x),
\quad g_\star(y) := \frac12 \|y\|^2 - \varphi^*(y).
\]
It is well-known (see e.g.,\,\cite{nutz2024quadratically}), that the dual problem of (\QOT) is
\begin{align}\label{eq:dual}
  \sup_{f_{\varepsilon} \in L^1(\mu), g_{\varepsilon} \in L^1(\nu)} \int f_{\varepsilon} \,\rmd \mu + \int g_{\varepsilon} \,\rmd \nu - \frac{1}{2\varepsilon} \int [f_{\varepsilon} \oplus g_{\varepsilon} - c ]_+^2 \,\rmd \bbP.
\end{align}
The continuous functions $f_\epsilon,g_\epsilon$ from \eqref{eq:optimal_density} in the Introduction are in fact solutions of \eqref{eq:dual}; we thus call them the \emph{optimal dual potentials} throughout this note. Up to constant shifts, they are uniquely defined via the set of equations
\begin{align}
  \int \big[f_{\varepsilon}(x) + g_{\varepsilon}(y) - c(x,y)\big]_+ \,\mu(\rmd x) &= \varepsilon 
  \qquad \forall y\in\R^d, \label{eq:dual1}\\
  \int \big[f_{\varepsilon}(x) + g_{\varepsilon}(y) - c(x,y)\big]_+ \,\nu(\rmd y) &= \varepsilon 
  \qquad \forall x\in \R^d. \label{eq:dual2}
\end{align}
A direct consequence of \eqref{eq:dual1}-\eqref{eq:dual2} is the following:
\begin{cor}
We have
\begin{align}
    \sup_{x \in \spt \mu} \big[ f_{\varepsilon}(x) + g_{\varepsilon}(y) - c(x, y) \big] \ge \varepsilon 
    \qquad & \forall y \in \R^d,
    \label{eqn: density lower bound x}
    \\
    \sup_{y \in \spt \nu} \big[ f_{\varepsilon}(x) + g_{\varepsilon}(y) - c(x, y) \big] \ge \varepsilon 
    \qquad & \forall x \in \R^d.
    \label{eqn: density lower bound y}
  \end{align}
\end{cor}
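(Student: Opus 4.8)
The statement is an immediate averaging consequence of \eqref{eq:dual1}--\eqref{eq:dual2}, so the plan is short. Fix $y \in \R^d$ and set
\[
h(x) := \big[f_\epsilon(x) + g_\epsilon(y) - c(x,y)\big]_+,
\]
which is a nonnegative continuous function of $x$ since $f_\epsilon$, $g_\epsilon$ and $c$ are continuous. Equation \eqref{eq:dual1} reads $\int h\,\rmd\mu = \epsilon$, and because $\mu$ is a probability measure we get $\epsilon = \int h\,\rmd\mu \le \esssup_\mu h$. Hence $\esssup_\mu h \ge \epsilon$.

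Next I would upgrade the essential supremum to a genuine supremum over $\spt\mu$. This is the only point requiring a (standard) argument: for any continuous $h \ge 0$ one has $\esssup_\mu h = \sup_{x\in\spt\mu} h(x)$, since on the one hand $h \le \sup_{\spt\mu} h$ $\mu$-a.e., and on the other hand for every $x_0 \in \spt\mu$ and every $\eta>0$ continuity gives an open neighborhood $U$ of $x_0$ with $h > h(x_0)-\eta$ on $U$, and $\mu(U)>0$ by definition of the support. Applying this yields $\sup_{x\in\spt\mu} h(x) \ge \epsilon > 0$; since $\spt\mu$ is compact the supremum is in fact attained.

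Finally, because this supremum is strictly positive, at (a neighborhood of) a maximizing point $x_\star \in \spt\mu$ we have $h(x_\star) = f_\epsilon(x_\star)+g_\epsilon(y)-c(x_\star,y) > 0$, so the positive part can be dropped and
\[
\sup_{x\in\spt\mu}\big[f_\epsilon(x)+g_\epsilon(y)-c(x,y)\big] \;\ge\; f_\epsilon(x_\star)+g_\epsilon(y)-c(x_\star,y) \;=\; h(x_\star) \;\ge\; \epsilon,
\]
which is \eqref{eqn: density lower bound x}. Repeating the same argument with \eqref{eq:dual2}, integrating in $y$ against $\nu$ and using $\spt\nu$ in place of $\spt\mu$, gives \eqref{eqn: density lower bound y}. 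I do not anticipate any genuine obstacle here; the only care needed is the continuity-to-support passage in the second step, which is why the standing assumption that $f_\epsilon,g_\epsilon$ are continuous (and $\spt\mu,\spt\nu$ compact) is used.
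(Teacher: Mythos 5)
Your proof is correct, and it takes exactly the route the paper implicitly intends: the paper states the corollary without proof, calling it ``a direct consequence of \eqref{eq:dual1}--\eqref{eq:dual2},'' and the averaging argument you spell out --- $\int h\,\rmd\mu=\varepsilon$ forces $\sup_{\spt\mu}h\ge\varepsilon$, after which the strict positivity of the supremum lets you drop the $[\,\cdot\,]_+$ --- is precisely that direct consequence made explicit. The continuity-to-support passage and the removal of the positive part are the right points to be careful about, and you handle both correctly.
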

We can also derive the following concavity property from  \eqref{eq:dual1}-\eqref{eq:dual2}, which will be important for the self-transport case $\mu=\nu$ discussed in Section \ref{sec:self-transport}.
\begin{lemma}\label{lem:concave}
The functions $x\mapsto f_\epsilon(x) -\|x\|^2/2$,  $y\mapsto g_\epsilon(y) -\|y\|^2/2$ are concave.
\end{lemma}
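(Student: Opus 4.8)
The plan is to work directly with the defining integral identities \eqref{eq:dual1}--\eqref{eq:dual2}, interpreting $f_\epsilon(x)-\tfrac12\|x\|^2$ as an implicitly defined level set of a jointly convex function. First I would expand $c(x,y)=\tfrac12\|x\|^2-\langle x,y\rangle+\tfrac12\|y\|^2$ and set $\tilde f_\epsilon:=f_\epsilon-\tfrac12\|\cdot\|^2$, $\tilde g_\epsilon:=g_\epsilon-\tfrac12\|\cdot\|^2$, so that \eqref{eq:dual2} reads
\[
\int \big[\,\tilde f_\epsilon(x)+\tilde g_\epsilon(y)+\langle x,y\rangle\,\big]_+\,\nu(\rmd y)=\epsilon \qquad \forall\, x\in\R^d .
\]
The goal is to show this forces $\tilde f_\epsilon$ to be concave.

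The key device is the auxiliary function $H:\R\times\R^d\to\R$, $H(t,x):=\int [\,t+\tilde g_\epsilon(y)+\langle x,y\rangle\,]_+\,\nu(\rmd y)$, which is finite everywhere since $\spt\nu$ is compact and $g_\epsilon$ is continuous, and which satisfies $H(\tilde f_\epsilon(x),x)=\epsilon$ for all $x$. I would record three properties: (i) $H$ is \emph{jointly convex} in $(t,x)$, because the integrand is the composition of the affine map $(t,x)\mapsto t+\tilde g_\epsilon(y)+\langle x,y\rangle$ with the convex nondecreasing function $s\mapsto s_+$, and convexity survives integration against the positive measure $\nu$; (ii) $H(\cdot,x)$ is nondecreasing; (iii) $H(\cdot,x)$ is \emph{strictly} increasing on $\{t:H(t,x)>0\}$, since $H(t_0,x)>0$ implies $\nu(\{y:t_0+\tilde g_\epsilon(y)+\langle x,y\rangle>0\})>0$, on which set the integrand grows at unit rate in $t$, giving $H(t_1,x)-H(t_0,x)\ge(t_1-t_0)\,\nu(\{y:t_0+\tilde g_\epsilon(y)+\langle x,y\rangle>0\})>0$ for $t_1>t_0$.

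With this in hand, fix $x_0,x_1\in\R^d$, $\lambda\in[0,1]$, put $x_\lambda:=(1-\lambda)x_0+\lambda x_1$ and $t_\lambda:=(1-\lambda)\tilde f_\epsilon(x_0)+\lambda\tilde f_\epsilon(x_1)$. Joint convexity of $H$ gives
\[
H(t_\lambda,x_\lambda)\le(1-\lambda)H(\tilde f_\epsilon(x_0),x_0)+\lambda H(\tilde f_\epsilon(x_1),x_1)=\epsilon=H(\tilde f_\epsilon(x_\lambda),x_\lambda).
\]
If $t_\lambda>\tilde f_\epsilon(x_\lambda)$, then since $H(\tilde f_\epsilon(x_\lambda),x_\lambda)=\epsilon>0$ and $H(\cdot,x_\lambda)$ is strictly increasing where positive, we would get $H(t_\lambda,x_\lambda)>\epsilon$, a contradiction; hence $t_\lambda\le\tilde f_\epsilon(x_\lambda)$, which is concavity of $\tilde f_\epsilon$. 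Repeating the argument with $\mu$ and $\nu$ swapped and using \eqref{eq:dual1} in place of \eqref{eq:dual2} yields concavity of $\tilde g_\epsilon$.

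The only genuinely delicate step is passing from the convexity inequality $H(t_\lambda,x_\lambda)\le\epsilon$ to the pointwise inequality $t_\lambda\le\tilde f_\epsilon(x_\lambda)$: this is precisely where strict monotonicity of $t\mapsto H(t,x)$ on its positive part is needed, and this in turn relies essentially on $\epsilon>0$ (so that the relevant level is attained in the region where $H(\cdot,x)$ is strictly increasing). Everything else is routine bookkeeping.
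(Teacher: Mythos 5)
Your proof is correct and takes essentially the same route as the paper: both rewrite $c(x,y)$ so that the linear term $\langle x,y\rangle$ is isolated, use convexity of $s\mapsto s_+$ to compare the integral at a convex combination of potentials against $\epsilon$, and then invoke the constraint \eqref{eq:dual2} together with monotonicity in $t$ of $t\mapsto\int[t+\cdot]_+\,\nu(\rmd y)$ to deduce the pointwise concavity inequality. The only difference is presentational: you package this as joint convexity of an auxiliary $H(t,x)$ and spell out the strict-monotonicity step (needed because $\epsilon>0$ places the constraint level in the region where $H(\cdot,x)$ is strictly increasing), whereas the paper invokes ``monotonicity of $x\mapsto(x+C)_+$'' more tersely -- your version is, if anything, a bit more careful on that point.
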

\begin{proof}
We only show that $x \mapsto f_\epsilon(x) -\|x\|^2/2 $ is concave. For this it is enough to realize that for any $x, \tilde x\in \R^d$ and any $\lambda\in [0,1]$ we have
\begin{align*}
&\int [\lambda (f_\epsilon(x)-\|x\|^2/2)+(1-\lambda) (f_\epsilon(\tilde x)-\|\tilde x\|^2/2) +g_\epsilon(y) -\|y\|^2/2 +\langle \lambda x +(1-\lambda) \tilde x, y\rangle  ]_+\,\nu(dy)\\
&\le \lambda \int [ f_\epsilon(x)-\|x\|^2/2+g_\epsilon(y)-\|y\|^2/2 +  \langle x,y\rangle ]_+\,\nu(dy)\\
&\qquad
 + (1-\lambda) \int [ f_\epsilon(\tilde x)-\|\tilde x\|^2/2+g_\epsilon(y) -\|y\|^2/2 + \langle \tilde x, y\rangle ]_+\,\nu(dy) \\
&=\lambda \int [f_\epsilon(x)+g_\epsilon(y)-c(x,y)]_+\, \nu(dy)+ (1-\lambda) \int [f_\epsilon(\tilde x)+g_\epsilon(y)-c(\tilde x,y)]_+\, \nu(dy) = \epsilon,
\end{align*}
where we used \eqref{eq:dual2} for the last inequality.
By monotonicity of $x\mapsto (x+C)_+$ we conclude again by \eqref{eq:dual2} that we must have
\begin{align*}
 \lambda (f_\epsilon(x)-\|x\|^2/2)+(1-\lambda) (f_\epsilon(\tilde x)-\|\tilde x\|^2/2) \le f_\epsilon(\lambda x +(1-\lambda) \tilde x)-\|\lambda x +(1-\lambda) \tilde x\|^2/2.   
\end{align*}
This proves the claim.
\end{proof}

\section{Self-transport: matching upper and lower bounds}\label{sec:self-transport}

As a warm-up exercise we first consider a specific instance of $C_\epsilon(\mu,\nu)$, namely the self-transport setting $\mu=\nu$. This setting is considerably easier to handle than the general case and allows for matching upper and lower bounds of  $\dist(\spt \pi_\epsilon; \spt\pi_\star)$ in general dimension. 

To recap, $\mu=\nu$ implies $\varphi(x) = \|x\|^2/2$,  $\nabla \varphi = \operatorname{id}$, and $\spt\pi_\star = \{(x, x): x \in \spt\mu\}$. In particular $ \dist(\spt \pi_\epsilon; \spt\pi_\star)\le  \sup_{(x,y) \in \spt \pi_\epsilon} \|x-y\|.$
Throughout the analysis we will make frequent use of  \cite[Theorem 2.2.(iv)]{nutz2024quadratically}, stating that the dual potentials can be chosen to satisfy $f_{\varepsilon} = g_{\varepsilon}$ in this case.

\subsection{Main results}

We now state our main results for the self-transport setting. As mentioned in the Introduction, we find matching upper and lower bounds in this case (see Theorems \ref{thm: sym ub}, \ref{thm: sym lb}).

\begin{theorem}[Upper bound, self-transport]
  \label{thm: sym ub}
  There exists a universal constant $C_{\mathsf{ub}}>0$ such that 
  \[
  \sup_{(x, y) \in \spt\pi_\epsilon} \|x - y\| \le C_{\mathsf{ub}} \min\big( \delta_{\mathsf{ST}}^{1/2}(\varepsilon), \Diam(\spt\mu) \big),
  \]
  where the improved $\epsilon$--spread $\delta_{\mathsf{ST}}(\varepsilon)$ is defined by 
  \[
  \delta_{\mathsf{ST}}(\varepsilon) \coloneqq \inf\{r>0 : r \cdot \rho(\sqrt r) > \varepsilon\}, 
  \quad 
  \rho(r) \coloneqq \inf_{x \in \spt\mu} \mu(B(x, r)).
  \]
  Consequently we have
  \[
  \dist(\spt \pi_\epsilon; \spt\pi_\star) \le C_{\mathsf{ub}} \min\big( \delta_{\mathsf{ST}}^{1/2}(\varepsilon), \Diam(\spt\mu) \big).
  \]
\end{theorem}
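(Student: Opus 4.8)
The plan is to exploit the self-transport structure $f_\epsilon = g_\epsilon$ together with the concavity from Lemma \ref{lem:concave} and the mass-normalization equations \eqref{eq:dual1}--\eqref{eq:dual2}. Fix $(x_0,y_0) \in \spt\pi_\epsilon$ and set $s := \|x_0 - y_0\|$; we want to show $s \lesssim \delta_{\mathsf{ST}}^{1/2}(\epsilon)$ (the $\Diam$ bound is trivial since $\spt\pi_\epsilon \subseteq \spt\bbP$). By definition of the support, $f_\epsilon(x_0) + g_\epsilon(y_0) - c(x_0,y_0) \ge 0$, i.e. $h(x_0) + h(y_0) \ge \langle x_0, y_0\rangle$ where $h(x) := f_\epsilon(x) - \|x\|^2/2$ is concave (using $f_\epsilon = g_\epsilon$ and $c(x,y) = \|x\|^2/2 + \|y\|^2/2 - \langle x,y\rangle$). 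Rewriting, this says $h(x_0) + h(y_0) - 2h(\tfrac{x_0+y_0}{2}) \ge \langle x_0,y_0\rangle - 2h(\tfrac{x_0+y_0}{2}) + \dots$; more directly, the key point is that the "density exponent" $D(x,y) := f_\epsilon(x)+g_\epsilon(y)-c(x,y)$ satisfies, along the segment, a concavity-type inequality forcing it to be comparably large at the midpoint $m := (x_0+y_0)/2$. Indeed, since $h$ is concave, $h(x_0) + h(y_0) \le 2 h(m)$, and combined with $-\langle x_0, y_0\rangle = -\|m\|^2 + \|x_0 - y_0\|^2/4 = -\|m\|^2 + s^2/4$, one gets $D(m,m) = 2h(m) - \|m\|^2 \ge h(x_0)+h(y_0) - \|m\|^2 \ge \langle x_0,y_0\rangle - \|m\|^2 = s^2/4$. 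Wait—more carefully: $D(m,m) = 2f_\epsilon(m) - \|m-m\|^2/2 = 2f_\epsilon(m)$, and $f_\epsilon(m) = h(m) + \|m\|^2/2$, so $D(m,m) = 2h(m) + \|m\|^2 \ge \langle x_0,y_0\rangle + \|m\|^2 \ge \dots$; the algebra needs care but the upshot is a clean lower bound $D(m,m) \gtrsim s^2$.

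Next I would propagate this lower bound to a neighborhood. Since $D(x,y) = f_\epsilon(x) + g_\epsilon(y) - c(x,y)$ and both $f_\epsilon - \|\cdot\|^2/2$, $g_\epsilon - \|\cdot\|^2/2$ are concave with $\|\nabla\varphi\| \le 1$-type a priori bounds, the map $y \mapsto D(m,y)$ is concave and its maximum over $y$ near $m$ is at least $D(m,m) \gtrsim s^2$; moreover one controls its decay: for $y$ with $\|y - m\| \le r$, a crude bound using the structure of $c$ and the concavity gives $D(m,y) \ge D(m,m) - C r \gtrsim s^2 - Cr$, so $D(m,y) \gtrsim s^2$ whenever $r \lesssim s^2$. (This linear-in-$r$ slack is where the $\sqrt{\cdot}$ versus the non-improved spread distinction comes from: the relevant ball has radius $\asymp s^2$, hence $\rho(\sqrt{r})$ with $r \asymp s^2$ corresponds to mass over a ball of radius $\asymp s$.) Then plugging into the normalization equation \eqref{eq:dual2} at the point $x = m$:
\[
\epsilon = \int [D(m,y)]_+ \, \nu(\rmd y) \ge \int_{B(m,r)} [D(m,y)]_+ \, \nu(\rmd y) \gtrsim s^2 \cdot \nu(B(m,r)) \ge s^2 \rho(r)
\]
with $\nu = \mu$, $r \asymp s^2$. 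This yields $\epsilon \gtrsim s^2 \rho(c s^2) = s^2 \rho(\sqrt{c' s^4}) $... I need to match the definition $\delta_{\mathsf{ST}}(\epsilon) = \inf\{r : r\rho(\sqrt r) > \epsilon\}$: setting $r := c' s^4$ doesn't obviously fit, so I would instead track radii so that the ball radius is $\asymp s$ and the amplitude is $\asymp s^2$, giving $\epsilon \gtrsim s^2 \rho(s)$. Writing $t := C s^2$ we get $t \cdot \rho(\sqrt{t/C}) \lesssim \epsilon$, hence $t \le C'' \delta_{\mathsf{ST}}(\epsilon)$ up to constants (by monotonicity in the definition of $\delta_{\mathsf{ST}}$), i.e. $s^2 \lesssim \delta_{\mathsf{ST}}(\epsilon)$, which is the claim. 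The final sentence of the theorem follows since $\dist(\spt\pi_\epsilon;\spt\pi_\star) \le \sup_{(x,y)\in\spt\pi_\epsilon}\|x-y\|$, as noted in the text.

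The main obstacle I anticipate is the second step: getting the right \emph{quantitative} decay of $y \mapsto D(m,y)$ away from the diagonal with the correct exponent, so that the ball radius entering $\rho$ scales like the distance $s$ (not like $s^2$), which is exactly what makes $\delta_{\mathsf{ST}}$ (with the $\sqrt{\cdot}$) rather than $\delta$ appear. One must use both the concavity of $g_\epsilon - \|\cdot\|^2/2$ \emph{and} the explicit quadratic form of $c$, perhaps together with the a priori Lipschitz bound on $f_\epsilon, g_\epsilon$ coming from $\spt\mu, \spt\nu \subseteq B(0,1)$ and \eqref{eqn: density lower bound x}--\eqref{eqn: density lower bound y}, to show $D$ does not drop by more than $O(s^2)$ over a ball of radius $\asymp s$ centered suitably. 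A secondary technical point is handling the constants in $\delta_{\mathsf{ST}}$: since $r \mapsto r\rho(\sqrt r)$ is nondecreasing, $\delta_{\mathsf{ST}}$ is well-behaved under rescaling $\epsilon \mapsto C\epsilon$, so the universal constant $C_{\mathsf{ub}}$ can be extracted cleanly at the end.
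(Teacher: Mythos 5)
Your Step~1 is essentially correct: with $h(x):=f_\epsilon(x)-\|x\|^2/2$ concave (Lemma~\ref{lem:concave}) and $D(x,y):=f_\epsilon(x)+g_\epsilon(y)-c(x,y)=h(x)+h(y)+\langle x,y\rangle$, concavity gives $2h(m)\ge h(x_0)+h(y_0)$, hence
$D(m,m)=2h(m)+\|m\|^2 \ge -\langle x_0,y_0\rangle+\|m\|^2=\tfrac14\|x_0-y_0\|^2$
(note your sign: the bound is $-\langle x_0,y_0\rangle$, not $+\langle x_0,y_0\rangle$). Equivalently $f_\epsilon(m)\ge s^2/8$, which is a local form of the paper's Lemma~\ref{lem: sym ub by M}. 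That part is fine.

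The gap is exactly where you flag it, in Step~2, and it is not a ``tracking constants'' issue: your crude $O(1)$ slope for $y\mapsto D(m,y)$ only yields $\epsilon\gtrsim s^2\rho(s^2)$, i.e.\ $s\lesssim\delta^{1/2}(\epsilon)$ with the \emph{unimproved} spread, strictly weaker than the claim. What you are missing is the content of Lemma~\ref{lem: grad ub}: integrating the subgradient inequality for $x\mapsto x_+$ against $\nu$ and using the normalization \eqref{eq:dual2} shows that $-\bar y(x_0)$, a conditional average of $y$ given $(x_0,y)\in\spt\pi_\epsilon$, is a supergradient of $h$ at $x_0$, and $\|\bar y(x_0)-y_0\|\le 2\sup_{\spt\pi_\epsilon}\|x-y\|$. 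Thus the slope of $D$ on $\spt\pi_\epsilon$ is $\lesssim\sup\|x-y\|$, not merely $O(1)$, and this is what makes the ball radius scale like $s$ and produces $\delta_{\mathsf{ST}}$. You would also need to apply this at a point where the coupling actually puts mass, which forces you to run the argument for a near-extremal pair; for a generic $(x_0,y_0)$ the slope bound is $\sup\|x-y\|$, not $s$.

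A second, unacknowledged problem is the choice of center: $m=(x_0+y_0)/2$ need not lie in $\spt\mu$ (only $\interior\spt\mu$ connected is assumed, not convexity), so $\mu(B(m,r))\ge\rho(r)$ fails in general, and trying to recenter at $x_0$ costs you an extra $\|m-x_0\|=s/2$ of travel at slope $\asymp s$, which eats more than the $s^2/4$ head start you have at $m$. The paper sidesteps both issues by working from the maximizer $y_0\in\spt\mu$ of $f_\epsilon$: there $D(y_0,y_0)=2M$, the slope bound $4\sqrt M$ follows from combining Lemmas~\ref{lem: sym ub by M} and~\ref{lem: grad ub}, and the connectedness/compactness argument propagates the bound to $B(y_0,\sqrt M/4)\cap\spt\mu$, landing directly on $\epsilon\ge M\rho(\sqrt M/4)$. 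So the overall strategy you describe (concavity plus normalization plus local-to-global) is the right one, but the gradient estimate and the choice of anchor point are the two missing ingredients; without them the argument as written does not close.
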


We list a few properties of $\delta_{\mathsf{ST}}$, which follow directly from the definition and will be frequently used in the proofs below.

\begin{lemma}
  We have the following:
  \begin{itemize}
      \item $\epsilon\mapsto \delta_{\mathsf{ST}}(\varepsilon)$ is monotone increasing.
      \item $\delta_{\mathsf{ST}}(\varepsilon) \ge \varepsilon$ for $\varepsilon>0$.
      \item $\delta_{\mathsf{ST}}(\varepsilon) = \varepsilon$ for $\varepsilon\ge 4$ since $\rho(r) = 1$ for $r \ge 2$. 
      \item $\delta_{\mathsf{ST}}(C\varepsilon) \le \max(C, 1) \delta_{\mathsf{ST}}(\varepsilon)$.
  \end{itemize}
\end{lemma}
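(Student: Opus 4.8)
The plan is to derive all four items directly from the definition of $\delta_{\mathsf{ST}}$, the key structural observation being that $r\mapsto\rho(r)$ is nondecreasing (since $B(x,r)\subseteq B(x,r')$ whenever $r\le r'$) and bounded above by $1$. Consequently $r\mapsto r\,\rho(\sqrt r)$ is nondecreasing, so the constraint set $S(\varepsilon)\coloneqq\{r>0:r\,\rho(\sqrt r)>\varepsilon\}$ is an up-set (an interval of the form $(r^*,\infty)$ or $[r^*,\infty)$) whose left endpoint is exactly $\delta_{\mathsf{ST}}(\varepsilon)$; in particular every $r>\delta_{\mathsf{ST}}(\varepsilon)$ belongs to $S(\varepsilon)$.

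For Item 1, if $\varepsilon_1\le\varepsilon_2$ then $S(\varepsilon_2)\subseteq S(\varepsilon_1)$, and the infimum of the smaller set dominates that of the larger, giving $\delta_{\mathsf{ST}}(\varepsilon_1)\le\delta_{\mathsf{ST}}(\varepsilon_2)$. For Item 2, since $\rho(\sqrt r)\le 1$ any $r\in S(\varepsilon)$ satisfies $r\ge r\,\rho(\sqrt r)>\varepsilon$, so taking the infimum yields $\delta_{\mathsf{ST}}(\varepsilon)\ge\varepsilon$. For Item 3 one first notes that $\spt\mu\subseteq B(0,1)$ forces $\Diam(\spt\mu)\le 2$, hence $\rho(r)=1$ for $r\ge 2$ and therefore $\rho(\sqrt r)=1$ for $r\ge 4$; thus when $\varepsilon\ge 4$ every $r>\varepsilon$ (so in particular $r>4$) satisfies $r\,\rho(\sqrt r)=r>\varepsilon$, i.e. $r\in S(\varepsilon)$, giving $\delta_{\mathsf{ST}}(\varepsilon)\le\varepsilon$; combined with Item 2 this is the claimed equality. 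For Item 4, if $C\le 1$ then $C\varepsilon\le\varepsilon$ and Item 1 gives $\delta_{\mathsf{ST}}(C\varepsilon)\le\delta_{\mathsf{ST}}(\varepsilon)=\max(C,1)\,\delta_{\mathsf{ST}}(\varepsilon)$; if $C>1$, set $r^*\coloneqq\delta_{\mathsf{ST}}(\varepsilon)$ and take any $r>Cr^*$. Then $r/C>r^*$, so $(r/C)\,\rho(\sqrt{r/C})>\varepsilon$, and since $\sqrt{r/C}\le\sqrt r$ and $\rho$ is nondecreasing, $r\,\rho(\sqrt r)\ge r\,\rho(\sqrt{r/C})=C\cdot(r/C)\,\rho(\sqrt{r/C})>C\varepsilon$. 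Hence every $r>Cr^*$ lies in $S(C\varepsilon)$, so $\delta_{\mathsf{ST}}(C\varepsilon)\le Cr^*=\max(C,1)\,\delta_{\mathsf{ST}}(\varepsilon)$.

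I do not anticipate any genuine obstacle: this is a routine monotonicity/scaling argument. The only two points needing mild care are the $C\le 1$ versus $C>1$ case split in Item 4, and the fact that, because the defining inequality $r\,\rho(\sqrt r)>\varepsilon$ is strict, one should argue with values $r$ strictly above the infimum (using that $S(\varepsilon)$ is an up-set) rather than plugging in the infimum itself.
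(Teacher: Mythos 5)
Your proof is correct, and it is indeed the natural argument one would extract from the definition. The paper itself offers no proof for this lemma (it simply remarks that the properties ``follow directly from the definition''), so there is no alternative route to compare against; your structural observation that $r\mapsto r\,\rho(\sqrt r)$ is nondecreasing, hence $S(\varepsilon)$ is an up-set whose infimum every strictly larger $r$ belongs to, is exactly the right way to handle the strict inequality in the defining set, and the case split in Item~4 together with the chain $r\,\rho(\sqrt r)\ge r\,\rho(\sqrt{r/C})>C\varepsilon$ is clean.
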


If $\mu$ is comparable to the Lebsgue measure, we obtain the following corollary.

\begin{cor}[Explicit rate]\label{rem:rate}
Assume that $\mu$ has a density bounded away from zero, i.e.,\,$\mu \ge c \cdot \mathrm{Leb}|_{\spt \mu}$ holds for some constant $c>0$, and $\spt\mu$ has $L$-Lipschitz boundary. Then there exists a constant $C_{c, L}>0$ depending only on $c, L$ such that
\begin{align*}
\dist(\spt \pi_\epsilon; \spt\pi_\star) \le C_{c, L} \min\big(\epsilon^{\frac{1}{d+2}} , \Diam(\spt\mu) \big).
\end{align*}
\end{cor}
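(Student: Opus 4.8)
The plan is to deduce Corollary \ref{rem:rate} directly from Theorem \ref{thm: sym ub} by estimating the improved $\epsilon$--spread $\delta_{\mathsf{ST}}(\varepsilon)$ under the stated hypotheses on $\mu$. By Theorem \ref{thm: sym ub} it suffices to show that $\delta_{\mathsf{ST}}(\varepsilon) \le C_{c,L}^2 \, \varepsilon^{\frac{2}{d+2}}$ whenever $\varepsilon^{\frac{1}{d+2}} \le \Diam(\spt\mu)$ (when $\varepsilon^{1/(d+2)}$ exceeds $\Diam(\spt\mu)$ the $\min$ picks up the diameter term and there is nothing to prove), which in turn amounts to a lower bound on $\rho(r) = \inf_{x\in\spt\mu} \mu(B(x,r))$.

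First I would establish the geometric estimate $\rho(r) \ge c_{c,L}\, r^d$ for all $r \le r_0$, where $r_0$ and $c_{c,L}$ depend only on $c$ and $L$. Here the point is that $\spt\mu$ has $L$-Lipschitz boundary, so there is a constant $\kappa(L) > 0$ such that for every $x \in \spt\mu$ and every $r \le r_0$ one has $\mathrm{Leb}(B(x,r) \cap \spt\mu) \ge \kappa(L)\, r^d$; this is the standard interior cone / Lipschitz domain volume bound (uniform lower density of a Lipschitz domain at its own points). Combining with $\mu \ge c \cdot \mathrm{Leb}|_{\spt\mu}$ gives $\mu(B(x,r)) \ge c\,\kappa(L)\, r^d$, hence $\rho(r) \ge c\,\kappa(L)\, r^d =: c' r^d$ for $r \le r_0$.

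Next I would plug this into the definition $\delta_{\mathsf{ST}}(\varepsilon) = \inf\{r > 0 : r \cdot \rho(\sqrt r) > \varepsilon\}$. For $r \le r_0^2$ we have $r \cdot \rho(\sqrt r) \ge c' \, r \cdot r^{d/2} = c'\, r^{\frac{d+2}{2}}$, so the defining inequality $r \cdot \rho(\sqrt r) > \varepsilon$ is satisfied as soon as $c'\, r^{\frac{d+2}{2}} > \varepsilon$, i.e.\ as soon as $r > (\varepsilon/c')^{\frac{2}{d+2}}$. Therefore $\delta_{\mathsf{ST}}(\varepsilon) \le (\varepsilon/c')^{\frac{2}{d+2}}$, valid provided $(\varepsilon/c')^{\frac{2}{d+2}} \le r_0^2$, i.e.\ for $\varepsilon$ below a threshold $\varepsilon_0(c,L)$. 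For $\varepsilon \ge \varepsilon_0$ one uses that $\delta_{\mathsf{ST}}$ is increasing and bounded by a constant (indeed $\delta_{\mathsf{ST}}(\varepsilon) \le \max(4, \Diam(\spt\mu)^2)$ by the listed properties and monotonicity, recalling $\spt\mu \subseteq B(0,1)$), while $\varepsilon^{\frac{2}{d+2}} \ge \varepsilon_0^{\frac{2}{d+2}}$ is bounded below; so the bound $\delta_{\mathsf{ST}}(\varepsilon) \le C_{c,L}^2\,\varepsilon^{\frac{2}{d+2}}$ holds after enlarging the constant. Taking square roots and invoking Theorem \ref{thm: sym ub} gives $\dist(\spt\pi_\epsilon;\spt\pi_\star) \le C_{c,L}\min(\varepsilon^{\frac{1}{d+2}}, \Diam(\spt\mu))$, as claimed.

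The only genuinely nontrivial ingredient is the uniform lower volume bound $\mathrm{Leb}(B(x,r)\cap\spt\mu) \gtrsim_L r^d$ for $x \in \spt\mu$; this is the step I expect to be the main obstacle to write cleanly, since it requires unpacking what "$L$-Lipschitz boundary" means (locally $\spt\mu$ is an epigraph of an $L$-Lipschitz function in suitable coordinates) and checking that at a point $x\in\partial\spt\mu$ a fixed-aperture cone of height $\asymp_L r$ fits inside $\spt\mu \cap B(x,r)$, with the aperture and the admissible radius depending only on $L$ (plus a compactness argument to make the local charts uniform). For interior points the bound is trivial once $r$ is smaller than the distance to the boundary, and for near-boundary points one compares with the nearest boundary point. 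Everything else is bookkeeping with the two marginal assumptions and the elementary monotonicity properties of $\delta_{\mathsf{ST}}$ already recorded in the excerpt.
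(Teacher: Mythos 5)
Your proposal is correct and follows exactly the paper's route: both establish $\rho(r)\gtrsim_{c,L} r^d$ via the Lipschitz-boundary interior-cone bound combined with the density lower bound, plug this into the definition of $\delta_{\mathsf{ST}}$ to get $\delta_{\mathsf{ST}}(\varepsilon)\lesssim_{c,L}\varepsilon^{2/(d+2)}$, and then invoke Theorem~\ref{thm: sym ub}. The paper's proof is essentially a one-line version of the same argument (it states $\rho(r)^{1/d}\asymp_{c,L} r$ and concludes), so you have just supplied the bookkeeping that the paper leaves implicit, including the correct observation that only the lower bound on $\rho$ and the small-$\varepsilon$ regime need care.
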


\begin{proof}
Under the assumptions on $\mu$ it follows that $\rho(r)^{1/d} \asymp_{c, L} r$, hence $\delta_{\mathsf{ST}}(\epsilon) \asymp_{c, L} \epsilon^{\frac{2}{d+2}}$. The claim now follows from Theorem~\ref{thm: sym ub}.  
\end{proof}

Theorem~\ref{thm: sym ub} is optimal, in the sense of the following matching lower bound.
\begin{theorem}[Lower bound, self-transport]
  \label{thm: sym lb}
  There exists a constant $C_{\mathsf{lb}}>0$ such that for any $\epsilon > 0$, we have $$\dist(\spt\pi_\epsilon; \spt\pi_\star) \ge C_{\mathsf{lb}} \min\big( \delta_{\mathsf{ST}}^{1/2}(\varepsilon), \Diam(\spt\mu) \big).$$ More precisely, there exists $(x, y) \in \spt \pi_{\varepsilon}$ satisfying $$\|x - y\| \ge \sqrt{2} C_{\mathsf{lb}} \min\big( \delta_{\mathsf{ST}}^{1/2}(\varepsilon), \Diam(\spt\mu) \big).$$
\end{theorem}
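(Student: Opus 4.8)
\emph{Setup and reduction.} Use $f_\epsilon=g_\epsilon$ throughout and write $G(x,y):=f_\epsilon(x)+f_\epsilon(y)-c(x,y)$, so that $\tfrac{\rmd\pi_\epsilon}{\rmd\bbP}=\tfrac1\epsilon G_+$ and, for each $y\in\spt\mu$, the slice $F_y:=\{x\in\spt\mu:G(x,y)>0\}$ satisfies $\tfrac1\epsilon\int_{F_y}G(x,y)\,\mu(\rmd x)=1$ by \eqref{eq:dual1} and $F_y\times\{y\}\subseteq\spt\pi_\epsilon$, hence $F_y\subseteq \bar B(y,s)\cap\spt\mu$, where $s:=\sup_{(x,y)\in\spt\pi_\epsilon}\|x-y\|$ is attained since $\spt\pi_\epsilon$ is compact. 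I will prove $s\ge\tfrac13\min\!\big(\delta_{\mathsf{ST}}^{1/2}(\varepsilon),\Diam(\spt\mu)\big)$; the theorem follows because $\|x-x'\|^2+\|y-x'\|^2\ge\tfrac12\|x-y\|^2$ for every $x'$, so the pair $(x,y)\in\spt\pi_\epsilon$ realizing the supremum has $\|x-y\|=s\ge\sqrt2\,C_{\mathsf{lb}}\min(\cdots)$ and $\dist(\spt\pi_\epsilon;\spt\pi_\star)\ge s/\sqrt2\ge C_{\mathsf{lb}}\min(\cdots)$ with $C_{\mathsf{lb}}=\tfrac1{3\sqrt2}$. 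Two elementary facts drive the argument: (i) the \emph{quadratic identity} $G(x,y)=\tfrac12\big(G(x,x)+G(y,y)\big)-\tfrac12\|x-y\|^2$, obtained by expanding $c(x,y)$ and using $G(z,z)=2f_\epsilon(z)$; and (ii) the \emph{Lipschitz bound} $\|\nabla f_\epsilon(x)\|\le s$ for $x\in\spt\mu$, which follows from the barycentric formula $\nabla f_\epsilon(x)=\mu(F_x)^{-1}\int_{F_x}(x-y)\,\mu(\rmd y)$: differentiate \eqref{eq:dual2} in $x$, using that $f_\epsilon-\|\cdot\|^2/2$ is concave (Lemma~\ref{lem:concave}) to pass from the resulting a.e.\ bound $\|\nabla f_\epsilon\|\le\Diam F_x\le s$ to a genuine Lipschitz estimate on $\spt\mu$.

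\emph{The key quantity.} Set $M:=\sup_{x\in\spt\mu}G(x,x)=2\sup_{\spt\mu}f_\epsilon$, attained at some $x_0\in\spt\mu$; note $M>0$, for otherwise $G\le0$ on $\spt\mu\times\spt\mu$, all slices are empty and $\int_{F_y}G\,\rmd\mu=\epsilon$ fails. By (i), $G(x,y)\le\tfrac12\big(G(x,x)+G(y,y)\big)\le M$ on $\spt\mu\times\spt\mu$, so $\tfrac{\rmd\pi_\epsilon}{\rmd\bbP}\le M/\epsilon$; inserting this into $1=\tfrac1\epsilon\int_{F_y}G(x,y)\,\mu(\rmd x)$ and using $F_y\subseteq \bar B(y,s)\subseteq B(y,2s)$ gives $\mu(B(y,2s))\ge\epsilon/M$ for every $y\in\spt\mu$, i.e.
\[
M\,\rho(2s)\ \ge\ \epsilon.\tag{A}
\]
In the opposite direction, (ii) gives $f_\epsilon(x)\ge f_\epsilon(x_0)-s\|x-x_0\|$ for $x\in\spt\mu$, which together with (i) yields, for $x\in\spt\mu$,
\[
G(x,x_0)\ \ge\ M-s\|x-x_0\|-\tfrac12\|x-x_0\|^2,
\]
and the right-hand side is $>0$ whenever $\|x-x_0\|<\sqrt{s^2+2M}-s$. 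Hence, if $M>9s^2$, then $\sqrt{s^2+2M}-s>\tfrac13\sqrt M>s$, so every point of $\spt\mu$ in $B(x_0,\tfrac13\sqrt M)$ lies in $F_{x_0}$ and is thus within distance $s$ of $x_0$; equivalently, $\spt\mu$ misses the non-empty open annulus $B(x_0,\tfrac13\sqrt M)\setminus\bar B(x_0,s)$.

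\emph{Conclusion by dichotomy.} If $M\le 9s^2$, then (A) gives $9s^2\rho(3s)\ge 9s^2\rho(2s)\ge M\rho(2s)\ge\epsilon$; with $r:=9s^2$ this reads $r\,\rho(\sqrt r)\ge\epsilon$, and since $\rho(t)>0$ for $t>0$ (recall $\spt\mu$ is compact) the definition of $\delta_{\mathsf{ST}}$ forces $r\ge\delta_{\mathsf{ST}}(\epsilon)$, i.e.\ $s\ge\tfrac13\delta_{\mathsf{ST}}^{1/2}(\epsilon)$. If instead $M>9s^2$, then $\interior\spt\mu$ is a connected open set disjoint from the annulus $B(x_0,\tfrac13\sqrt M)\setminus\bar B(x_0,s)$, hence contained in one of the two disjoint closed sets $\bar B(x_0,s)$ and $\R^d\setminus B(x_0,\tfrac13\sqrt M)$; as $\spt\mu=\overline{\interior\spt\mu}$ accumulates at $x_0\in\spt\mu$, it must be contained in the first, so $\spt\mu\subseteq\bar B(x_0,s)$ and $\Diam(\spt\mu)\le 2s$. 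In both cases $s\ge\tfrac13\min\!\big(\delta_{\mathsf{ST}}^{1/2}(\varepsilon),\Diam(\spt\mu)\big)$, which completes the proof.

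\emph{Main obstacle.} The substantive step is fact (ii): justifying the barycentric differentiation of \eqref{eq:dual2} requires a bit of regularity of $f_\epsilon$ (differentiability on $\interior\spt\mu$ and negligibility of the kink set $\{y:G(x,y)=0\}$ for a.e.\ $x$), so that the known fine structure of the $(\QOT)$ potentials enters here; granting (ii), the rest is bookkeeping with (A) and the displayed pointwise lower bound on $G(\cdot,x_0)$. A minor secondary point is the identification $\spt\mu=\overline{\interior\spt\mu}$ used in the case $M>9s^2$, which is harmless under the standing assumptions on $\mu$.
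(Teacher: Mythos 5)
Your proposal is correct in substance and follows essentially the same route as the paper: both arguments hinge on a gradient/supergradient estimate for $f_\epsilon$ of size $\lesssim s := \sup_{\spt\pi_\epsilon}\|x-y\|$, combined with concavity of $f_\epsilon - \|\cdot\|^2/2$ (Lemma~\ref{lem:concave}) to show that a ball of radius $\asymp\sqrt{M}$ around the maximizer $x_0$ lies in $F_{x_0}$, then a dichotomy between $M\lesssim s^2$ and $\spt\mu\subseteq \bar B(x_0,s)$, and finally a lower bound on $M$ via \eqref{eq:dual1} and the definition of $\delta_{\mathsf{ST}}$. Two remarks on the differences and on the gaps you flag.

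First, your ``main obstacle'' (ii) is exactly the paper's Lemma~\ref{lem: grad ub}. There the barycentric formula $\bar y(x)=\mu(\{y:(x,y)\in\spt\pi_\epsilon\})^{-1}\int_{\{y:(x,y)\in\spt\pi_\epsilon\}} y\,\mu(\rmd y)$ is shown to yield a \emph{supergradient} of the concave function $f_\epsilon-\|\cdot\|^2/2$ by integrating the subgradient inequality for $t\mapsto t_+$ against $\mu(\rmd y)$ in \eqref{eq:dual2}; no differentiability of $f_\epsilon$ or negligibility of the kink set is needed. In particular $\|x-\bar y(x)\|\le s$ for $x\in\spt\mu$, which is your (ii) in supergradient form, and is what the paper also uses (via \eqref{eqn: large spread by grad ub}) to obtain the lower bound. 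So this step is fully justified by the paper's own lemma, which you could simply invoke.

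Second, a small imprecision: from concavity of $h:=f_\epsilon-\|\cdot\|^2/2$ and $\|x-\bar y(x)\|\le s$ one does not get the pure Lipschitz bound $f_\epsilon(x)\ge f_\epsilon(x_0)-s\|x-x_0\|$; the supergradient inequality for $h$ at $x\in\spt\mu$, rewritten in terms of $f_\epsilon$, produces an extra quadratic term, namely $f_\epsilon(x)\ge f_\epsilon(x_0)-s\|x-x_0\|-\tfrac12\|x-x_0\|^2$. (Your claimed pure Lipschitz estimate would require $\|\nabla f_\epsilon\|\le s$ along entire segments between points of $\spt\mu$, which is not guaranteed when $\spt\mu$ is not convex.) Propagating the corrected bound through your quadratic identity gives $G(x,x_0)\ge M - s\|x-x_0\| - \|x-x_0\|^2$ rather than $M - s\|x-x_0\| - \tfrac12\|x-x_0\|^2$; the positivity radius becomes $\tfrac12(\sqrt{s^2+4M}-s)$, which under $M>9s^2$ still exceeds both $s$ and $\tfrac13\sqrt{M}$, so your dichotomy and the constant $C_{\mathsf{lb}}=\tfrac1{3\sqrt2}$ survive unchanged. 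Modulo these two points, the bookkeeping with (A), the annulus/connectedness argument, and the reduction $\dist(\spt\pi_\epsilon;\spt\pi_\star)\ge s/\sqrt2$ are all correct, and the quadratic identity $G(x,y)=\tfrac12\big(G(x,x)+G(y,y)\big)-\tfrac12\|x-y\|^2$ is a clean way to package what the paper does by hand.
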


The remainder of this section is devoted to the proof of Theorems \ref{thm: sym ub} and \ref{thm: sym lb}.

\subsection{Proof of the upper bound}

We now detail the proof Theorem~\ref{thm: sym ub}, which will proceed via two lemmas. The first controls the distance between $\spt\pi_\epsilon$ and $\spt\pi_\star$ via a simple upper bound on the dual potential.

\begin{lemma}
\label{lem: sym ub by M}
Define $M := \sup_{x \in \spt\mu} f_\epsilon(x).$ Then we have 
\[
\spt \pi_{\varepsilon} \subseteq \{(x, y) \in \spt \bbP: \|x - y\|^2 \le 4M\}.
\]
\end{lemma}
\begin{proof}
Recall that $f_{\varepsilon} = g_{\varepsilon}$. 
It is evident that
  \[
  \sup_{(x, y) \in \spt\bbP} \big[ f_{\varepsilon}(x) + g_{\varepsilon}(y) \big] 
  = 2 \sup_{x \in \spt\mu} f_{\varepsilon}(x) = 2M.
  \]
Therefore
  \begin{align*}
  \spt \pi_{\varepsilon} = \overline{\{(x,y) \in \spt\bbP: f_{\varepsilon}(x) + f_{\varepsilon}(y) - c(x, y) > 0\}}
  & \subseteq \{(x,y) \in \spt\bbP: c(x, y) \le 2M\} 
  \\
  & = \{(x,y) \in \spt\bbP: \|x-y\|^2 \le 4M\}.
  \end{align*}
  This concludes the proof.
\end{proof}

By Lemma \ref{lem:concave} we know that the function
\begin{align*}
(x,y) \mapsto f_\epsilon(x) + g_\epsilon(y)-c(x,y) = f_\epsilon(x)-\|x\|^2/2 +g_\epsilon(y)-\|y\|^2 +\langle x,y\rangle
\end{align*}
is concave. The following lemma bounds the supergradient of $(x,y)\mapsto f_\epsilon(x) + g_\epsilon(y)-c(x,y)$ by the maximal spread of the support of $\spt\pi_\epsilon$.

\begin{lemma}[Gradient estimate, self-transport]
\label{lem: grad ub}
For $x\in \spt \mu$ define
\begin{align*}
\bar{y}(x) := \frac{\int_{\{y: (x, y)\in \spt\pi_\epsilon\}} y \, \nu(\rmd y)}{\nu(\{y: (x, y)\in \spt\pi_\epsilon\})}.
\end{align*}
Then $\bar{y}(x) -y \in \partial_x (f_\epsilon(x) + g_\epsilon(y) - c(x, y))$ and we have
\[
\sup_{(x, y) \in \spt \pi_\epsilon} \left\| \bar{y}(x) -y  \right\| 
 \le 2 \sup_{(x, y) \in \spt\pi_\epsilon} \|x - y\|.
\]
\end{lemma}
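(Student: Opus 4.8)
The plan is to prove the two assertions in turn: the membership $\bar{y}(x) - y \in \partial_x(f_\epsilon(x)+g_\epsilon(y)-c(x,y))$ by differencing the $x$-independent constraint \eqref{eq:dual2}, and the gradient bound by a short triangle inequality.

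First I would set up notation: write $F(x,y) := f_\epsilon(x) + g_\epsilon(y) - c(x,y)$ and $\tilde{f}(x) := \tfrac12\|x\|^2 - f_\epsilon(x)$, the latter being convex by Lemma~\ref{lem:concave}, and put $m(x) := \nu(\{y : (x,y) \in \spt\pi_\epsilon\})$. I would record two simple facts. (i) $m(x) > 0$ for every $x$, since $\int[F(x,y)]_+\,\nu(\rmd y) = \epsilon > 0$ by \eqref{eq:dual2} forces $F(x,\cdot)$ to be positive on a $\nu$-nonnull set, which is contained in the slice. (ii) For $x\in\spt\mu$ and $\nu$-a.e.\ $y$, the indicator $\mathbf{1}_{\{(x,y)\in\spt\pi_\epsilon\}}$ is a subgradient of $t\mapsto [t]_+$ at $t = F(x,y)$: if $(x,y)\in\spt\pi_\epsilon$ then $F(x,y)\ge 0$ by continuity of $F$, while if $(x,y)\notin\spt\pi_\epsilon$ then, since $\spt\bbP = \spt\mu\times\spt\nu$ and by the definition \eqref{eqn: def spt} of the support, $F(x,y)\le 0$.

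With this in place, I would difference \eqref{eq:dual2} at two arbitrary points $x'$ and $x$, apply the subgradient inequality $[a]_+ - [b]_+ \ge s\,(a-b)$ with $a = F(x',y)$, $b = F(x,y)$ and $s = \mathbf{1}_{\{(x,y)\in\spt\pi_\epsilon\}}$ (valid $\nu$-a.e.\ by (ii)), and substitute the elementary identity $F(x',y) - F(x,y) = \tilde{f}(x) - \tilde{f}(x') + \langle x'-x, y\rangle$ to get
\[
0 = \int\big([F(x',y)]_+ - [F(x,y)]_+\big)\,\nu(\rmd y) \;\ge\; m(x)\Big(\tilde{f}(x) - \tilde{f}(x') + \big\langle x'-x, \bar{y}(x)\big\rangle\Big).
\]
Dividing by $m(x) > 0$ gives $\tilde{f}(x') - \tilde{f}(x) \ge \langle\bar{y}(x), x'-x\rangle$ for all $x'$, i.e.\ $\bar{y}(x)\in\partial\tilde{f}(x)$; reinserting this into the identity for $F(x',y) - F(x,y)$ is then readily rewritten as the membership $\bar{y}(x) - y \in \partial_x F(x,y)$ claimed in the statement.

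For the second assertion I would note that $\int_{\{y':(x,y')\in\spt\pi_\epsilon\}} y\,\nu(\rmd y') = m(x)\,y$, hence $\bar{y}(x) - y = \tfrac{1}{m(x)}\int_{\{y':(x,y')\in\spt\pi_\epsilon\}} (y'-y)\,\nu(\rmd y')$, so the triangle inequality gives $\|\bar{y}(x)-y\| \le \sup\{\|y'-y\| : (x,y')\in\spt\pi_\epsilon\}$; and whenever $(x,y),(x,y')\in\spt\pi_\epsilon$ we have $\|y'-y\| \le \|y'-x\| + \|x-y\| \le 2\sup_{(a,b)\in\spt\pi_\epsilon}\|a-b\|$. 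None of the steps is hard; the only point that needs a little care is fact (ii), i.e.\ checking $\nu$-a.e.\ that $\mathbf{1}_{\{(x,y)\in\spt\pi_\epsilon\}}$ is an admissible subgradient, so that the averaging set in the definition of $\bar{y}(x)$ is exactly the one produced by the differencing argument. (An alternative for the first assertion is to differentiate $x\mapsto\int[F(x,y)]_+\,\nu(\rmd y)\equiv\epsilon$ under the integral to obtain $\nabla f_\epsilon(x) = x - \bar{y}(x)$ at points of differentiability and then extend to all $x\in\spt\mu$ by continuity, but I would favour the convexity route above since it sidesteps the a.e.-differentiability technicalities of the locally semiconvex potential $f_\epsilon$.)
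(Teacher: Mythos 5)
Your argument is essentially the paper's own: difference the constraint \eqref{eq:dual2} via the subgradient of $t\mapsto [t]_+$ (taking the slice indicator $\mathbf{1}_{\{(x,y)\in\spt\pi_\epsilon\}}$ as a valid subgradient), establish $m(x)>0$ from \eqref{eq:dual2}, identify $\bar{y}(x)\in\partial\tilde{f}(x)$, and finish with the triangle inequality --- exactly the paper's route. One remark on the step you leave as ``readily rewritten'': carrying out the reinsertion actually yields the supergradient $y-\bar{y}(x)$ rather than $\bar{y}(x)-y$, which is what the paper's own proof text derives ($y_0-\bar y\in\partial_x(\cdots)$); the opposite sign in the lemma statement is a preexisting typo in the paper that you inherited, and it is harmless since only the norm $\|\bar{y}(x)-y\|$ is used downstream.
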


\begin{proof}
This can be proved by differentiating the \eqref{eq:dual2}, but to avoid differentiability issues, we take a different approach here.

Fix $(x_0, y_0) \in \spt\pi_\epsilon$. For any $x \in \spt\mu$, taking the subgradient of the convex function $x\mapsto x_+$ gives
\begin{align*}
&(f_\epsilon(x) + g_\epsilon(y) - c(x, y))_+ - (f_\epsilon(x_0) + g_\epsilon(y) - c(x_0, y))_+ 
\\
&\quad \ge \mathbf{1}_{\{f_\epsilon(x_0) + g_\epsilon(y) - c(x_0, y) \ge 0\}} \Big[ \big( f_\epsilon(x) + g_\epsilon(y) - c(x, y) \big) - \big( f_\epsilon(x_0) + g_\epsilon(y) - c(x_0, y) \big) \Big].
\end{align*}
 Integrating this inequality over $y$ and invoking \eqref{eq:dual2} for $x$ and $x_0$ we obtain
\[
\int_{\{y: (x_0, y)\in \spt\pi_\epsilon\}} \Big[ \big( f_\epsilon(x) - \frac12\|x\|^2 \big) -  \big( f_\epsilon(x_0) - \frac12\|x_0\|^2 \big) + \langle x - x_0, y\rangle \Big]\, \nu(\rmd y) \le 0.
\]
Using again \eqref{eq:dual2} we conclude $\nu(\{y: (x_0,y) \in \spt \pi_\epsilon\}) > 0$. Thus
\[
\big( f_\epsilon(x) - \frac12\|x\|^2 \big) - \big( f_\epsilon(x_0) - \frac12\|x_0\|^2 \big) \le \left\langle x - x_0, -\bar{y} \right\rangle, \quad \bar{y} = \frac{\int_{\{y: (x_0, y)\in \spt\pi_\epsilon\}} y \, \nu(\rmd y)}{\nu(\{y: (x_0, y)\in \spt\pi_\epsilon\})}.
\]
This implies that $-\bar{y}$ is a supergradient of $f_\epsilon(x) - \|x\|^2/2$ at $x_0$, hence
\[
y_0 - \bar{y} \in \partial _x(f_\epsilon(x_0) + g_\epsilon(y_0) - c(x_0, y_0)). 
\]
It now suffices to show $\|y_0 - \bar{y}\| \le 2\sup_{(x, y) \in \spt\pi_\epsilon} \|x - y\|$. This follows easily from 
\begin{align*}
\|y_0 - \bar{y}\| \le \frac{\int_{\{y: (x_0, y)\in \spt\pi_\epsilon\}} \| y_0 - y \| \,\nu(\rmd y)}{\nu(\{y: (x_0, y)\in \spt\pi_\epsilon\})}
& \le \sup_{y: (x_0, y)\in \spt\pi_\epsilon} \|y_0 - y\| 
\\
& \le  \| y_0-x_0\|+ \sup_{y: (x_0, y)\in \spt\pi_\epsilon} \|x_0 - y\|  \\
& \le 2\sup_{(x,y)\in \spt\pi_\epsilon} \|x - y\|,
\end{align*}
where the last line used $(x_0, y_0) \in \spt \pi_\epsilon$. This completes the proof.
\end{proof}

We are now in a position to give the proof of Theorem~\ref{thm: sym ub}.
\begin{proof}[Proof of Theorem~\ref{thm: sym ub}]
First note that $\|x-y\| \le \Diam(\spt\mu)$ holds for $(x,y) \in \spt\pi_\epsilon \subseteq \spt\mu \times \spt \mu$, thus it suffices to show $\|x-y\|^2 \lesssim \delta_{\mathsf{ST}}(\epsilon)$ for $(x, y) \in \spt\pi_\epsilon$.
Combining Lemma~\ref{lem: sym ub by M} with Lemma~\ref{lem: grad ub}, we readily obtain for all $(x, y) \in \spt\pi_\epsilon$ that
  \begin{equation}
  \label{eqn: grad ub by M}
  \| \bar{y}(x)-y \| \le 4\sqrt{M},
  \end{equation}
  where we recall that $M = \sup_{x \in \spt\mu} f_\epsilon(x)$. 
  Let $y_0 \in \spt\mu$ be such that $f_\epsilon(y_0) = g_\epsilon(y_0) = M$ (such $y_0$ exists since $f_\epsilon = g_\epsilon$ is continuous). 
  Define
  \[K_{M} := \{x \in \spt\mu: f_\epsilon(x) + g_\epsilon(y_0) - c(x, y_0) \ge M \}.\] 
  It is clear that $y_0 \in K_{M}$ as $c(y_0,y_0)=0$, and $(x, y_0) \in \spt \pi_\epsilon$ for $x \in K_{M}$ by \eqref{eqn: def spt}. We claim that 
  \begin{equation}
  \label{eqn: large spread by grad ub}
  B(y_0, \sqrt{M} / 4) \cap \spt\mu \subseteq K_{M} \subseteq \spt\pi_\epsilon. 
  \end{equation}
  In fact, the last inclusion follows from \eqref{eqn: def spt}, so we only need to prove the first inclusion. For this let $r$ be the supremum of all positive numbers such that $B(y_0, r) \cap \spt\mu \subseteq K_{M}$. 
  It suffices to show that $r \ge \sqrt{M}/4$, and for this purpose we suppose to the contrary that $r < \sqrt{M}/4$. The definition of $r$ implies that for any $\theta>0$, there exists $x_\theta \in \spt\mu$ such that $r \le \|x_\theta - y_0\| < r + \theta$ and $x_\theta \not\in K_{M}$. By compactness, one may find a sequence  $\theta_n \to 0$ such that $x_{\theta_n}$ converges to some point $x$. It is evident that $\|x - y_0\| = r$, hence $x \in \overline{B(y_0, r)} \cap \spt \mu \subseteq \overline{K_{M}} = K_{M}$ (note that $K_{M}$ is closed as it is the preimage of a closed set under a continuous map). But $x$ as the limit of  $x_{\theta_n} \not \in K_{M}$ satisfies $x \in \partial K_{M}$. Therefore, we have $f_\epsilon(x) + g_\epsilon(y_0) - c(x, y_0) = M$. Consequently $(x, y_0) \in \spt\pi_\epsilon$.
  Recalling~\eqref{eqn: grad ub by M} and using concavity of $x\mapsto f_\epsilon(x) + g_\epsilon(y) - c(x, y)$, we obtain
  \begin{align*}
  M = f_{\epsilon}(x) + g_{\epsilon}(y_0) - c(x, y_0)
  & \ge f_{\epsilon}(y_0) + g_{\epsilon}(y_0) - c(y_0, y_0) + \langle \bar{y}(x)-y,  x - y_0\rangle
  \nonumber \\
  & \ge 2M - \| \bar{y}(x)-y \| \|x-y_0\| 
  \nonumber \\
  & \ge 2M - 4\sqrt{M} \cdot r
  \nonumber\\
  & > 2M - 4\sqrt{M} \cdot \sqrt{M} / 4 = M,
  \end{align*}
  a contradiction. Thus $r \ge \sqrt{M} / 4$.
  This completes the proof that $B(y_0, \sqrt{M} / 4) \cap \spt\mu \subseteq K_{M}$.
  Therefore we deduce from \eqref{eq:dual1} that
    \[
    \varepsilon = \int [f_{\varepsilon}(x) + g_{\varepsilon}(y_0) - c(x, y_0) ]_+ \,\mu(\rmd x) \ge M \mu(K_{M}) \ge M  \mu\big( B(y_0, \sqrt{M}/4)) \big) \ge M \rho(\sqrt{M}/4),
  \]
  which implies $M \le  16\delta_{\mathsf{ST}}(\epsilon/16) \le 16\delta_{\mathsf{ST}}(\epsilon)$. Together with  Lemma~\ref{lem: sym ub by M} this concludes the proof.
\end{proof}

\subsection{Proof of the lower bound}

\begin{proof}[Proof of Theorem~\ref{thm: sym lb}]
  As in the proof of Theorem~\ref{thm: sym ub} we recall $M = \sup_{x \in \spt\mu} f_\epsilon(x)$, and choose $x_0 \in \spt \mu$ such that $f_\epsilon(x_0) = g_\epsilon(x_0) = M$.  
  We show that there exists $(x_0, y_0) \in \spt \pi_{\varepsilon}$ satisfying $\|x_0 - y_0\| \gtrsim \min\big( \sqrt{M}, \Diam(\spt\mu) \big)$: indeed, \eqref{eqn: large spread by grad ub} in the proof of Theorem~\ref{thm: sym ub} together with \eqref{eqn: def spt} readily implies that for all $y \in B(x_0, \sqrt{M} / 4) \cap \spt \mu$ we have $(x_0, y) \in \spt \pi_\epsilon$. We now distinguish two cases: 
  \begin{enumerate}[label=(\roman*)]
      \item If the intersection of $B(x_0, \sqrt{M} / 4) \setminus B(x_0, \sqrt{M} / 8)$ and $\spt \mu$ is non-empty, then any point $y_0$ in this intersection satisfies $(x_0, y_0) \in \spt\pi_\epsilon$ and $\|x_0 - y_0\| \ge \sqrt{M}/8$, as desired. 
      \item Now we assume that the intersection of $B(x_0, \sqrt{M} / 4) \setminus B(x_0, \sqrt{M} / 8)$ and $\spt \mu$ is empty. Since $x_0 \in \spt\mu$ and $\interior\spt\mu$ is connected, this implies $\spt\mu \subseteq B(x_0, \sqrt{M}/8)$, hence $B(x_0, \sqrt{M}/4) \cap \spt\mu = \spt\mu$. By \eqref{eqn: large spread by grad ub} we know that $(x_0, y) \in \spt\pi_\epsilon$ for all $y \in \spt\mu$. Using the triangle inequality and the definition of $\Diam(\spt\mu)$ there exists $y_0 \in \spt\mu$ such that $\|x_0 - y_0\| \ge \Diam(\spt\mu) / 2$. In conclusion $(x_0, y_0)\in \spt\pi_\epsilon$ fulfills the desired condition.
  \end{enumerate}
  
  Having established the existence of $(x_0, y_0) \in \spt\pi_\epsilon$ such that $\|x_0 - y_0\| \gtrsim \min\big( \sqrt{M}, \Diam(\spt\mu) \big)$, it remains to show $M \gtrsim \delta_{\mathsf{ST}}(\varepsilon)$. First, by Lemma~\ref{lem: sym ub by M} we have $\spt \pi_{\varepsilon} \subseteq \{(x, y) \in \spt \bbP: \|x - y\|^2 \le 4M\}.$
  Using this fact \eqref{eq:dual1} yields
  \[
  \varepsilon = \int \big[f_{\varepsilon}(x) + f_{\varepsilon}(y) - c(x, y)\big]_+ \,\mu(\rmd y) 
  \le 2M \cdot \mu\left( B(x, 2\sqrt{M}) \right) \qquad \forall x \in \spt\mu.
  \]
  By definition of $\delta_{\mathsf{ST}}$ we conclude that $M \ge \delta_{\mathsf{ST}}(2\epsilon) / 4 \ge \delta_{\mathsf{ST}}(\epsilon) / 4$, and the conclusion follows. 
\end{proof}

\section{Preparations for the general case: bounding the density $\rmd\pi_\epsilon/\rmd\bbP$}\label{sec:prep}

Now we turn to the general setting $\mu \ne \nu$. As in Section \ref{sec:self-transport} we first give an upper bound of the re-normalised density $$\epsilon\cdot \frac{\rmd \pi_\epsilon}{\rmd \bbP}(x,y)= f_\epsilon(x) + g_\epsilon(y) - c(x, y).$$ 
 To achieve this, we establish Lipschitz regularity of $f_\epsilon$ (Lemma \ref{lem:continuity}), and then use \eqref{eq:dual1} to give an upper bound on $\epsilon\cdot \rmd \pi_\epsilon/\rmd \bbP$ in terms of the $\epsilon$--spread of $\mu$ (Lemma \ref{lem: density upper bound}). In Lemma \ref{lem: approx conj} we show that, up to an error of order $\delta(\epsilon)$, the dual potentials $f_\epsilon$ and $g_\epsilon$ are conjugate. This observation relies on Lemma \ref{lem: density upper bound} and will be crucial for the proofs of Theorems \ref{thm: concentration}, \ref{thm: general ub} and \ref{thm: bdry ub} in Section \ref{sec:general}.

To start we recall the following result on the modulus of continuity of the dual potentials, which was proved in \cite[Lemma 2.5.(iii)]{nutz2024quadratically}.

\begin{lemma}\label{lem:continuity}
For all $y\in \R^d$ the functions $x\mapsto c(x,y)$ and  $x\mapsto f_{\varepsilon}(x)$ are $2$-Lipschitz continuous.
\end{lemma}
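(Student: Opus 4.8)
The plan is to prove the two assertions separately: the bound on $x \mapsto c(x,y)$ is a one-line gradient computation, and the bound on $f_\varepsilon$ is obtained by transporting this bound through the fixed-point equation \eqref{eq:dual2} using only the monotonicity of $t \mapsto (t+a)_+$. (This is, in essence, the argument of the cited \cite[Lemma 2.5]{nutz2024quadratically}.) Since both marginals sit inside $B(0,1)$, the statement is to be read with $x$ ranging over (a neighborhood of) $\spt\mu \subseteq B(0,1)$ and $y$ over $\spt\nu \subseteq B(0,1)$, which is all that is ever used in \eqref{eq:dual1}--\eqref{eq:dual2}.

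For the cost: $\nabla_x c(x,y) = x-y$, so $\|\nabla_x c(x,y)\| = \|x-y\| \le \|x\| + \|y\| \le 2$ for $x,y \in B(0,1)$. Since the segment $[x,x']$ stays in $B(0,1)$ by convexity, the mean value inequality gives $|c(x,y) - c(x',y)| \le 2\|x-x'\|$, which is the first claim.

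For $f_\varepsilon$: introduce $G(x,t) \coloneqq \int [t + g_\varepsilon(y) - c(x,y)]_+ \,\nu(\rmd y)$, so that \eqref{eq:dual2} reads $G(x, f_\varepsilon(x)) = \varepsilon$ for every $x$. The map $t \mapsto G(x,t)$ is continuous, non-decreasing, and tends to $0$ (resp.\ $+\infty$) as $t \to -\infty$ (resp.\ $t\to +\infty$); crucially, because $\varepsilon>0$, it is \emph{strictly} increasing across its $\varepsilon$-level set, i.e.\ $G(x,t) < \varepsilon$ for every $t < f_\varepsilon(x)$. Indeed, $G(x, f_\varepsilon(x)) = \varepsilon > 0$ forces $\nu(\{y : f_\varepsilon(x) + g_\varepsilon(y) - c(x,y) > 0\}) > 0$, and on this set the integrand defining $G(x,\cdot)$ is strictly smaller at any $t < f_\varepsilon(x)$ (and no larger elsewhere). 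Now fix $x, x' \in B(0,1)$. Using the first part of the lemma pointwise inside the bracket, $c(x,y) \ge c(x',y) - 2\|x-x'\|$ for $y \in \spt\nu$, hence
\[
\varepsilon = G(x, f_\varepsilon(x)) = \int [f_\varepsilon(x) + g_\varepsilon(y) - c(x,y)]_+ \,\nu(\rmd y) \le \int [f_\varepsilon(x) + 2\|x-x'\| + g_\varepsilon(y) - c(x',y)]_+\,\nu(\rmd y) = G\big(x', f_\varepsilon(x) + 2\|x-x'\|\big).
\]
Comparing with $G(x', f_\varepsilon(x')) = \varepsilon$ and invoking the strict monotonicity of $G(x',\cdot)$ at level $\varepsilon$ yields $f_\varepsilon(x) + 2\|x-x'\| \ge f_\varepsilon(x')$. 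Exchanging $x$ and $x'$ gives the reverse bound, so $|f_\varepsilon(x) - f_\varepsilon(x')| \le 2\|x-x'\|$.

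The only genuinely delicate point is the strict monotonicity of $G(x,\cdot)$ across the level $\varepsilon$: this is exactly where the positivity $\varepsilon > 0$ enters, and it is also what guarantees that $f_\varepsilon(x)$ is the unique value solving \eqref{eq:dual2} given $g_\varepsilon$, so the argument is not circular. Everything else is the routine observation that $t\mapsto (t+a)_+$ is non-decreasing, combined with the elementary Lipschitz bound on $c$.
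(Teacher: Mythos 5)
Your proof is correct and, as you note yourself, reproduces the argument of the cited \cite[Lemma 2.5.(iii)]{nutz2024quadratically}; the paper's own proof simply observes that $c$ is $2$-Lipschitz on $B(0,1)$ and delegates the rest to that reference. Your self-contained derivation via the strictly-increasing-at-level-$\varepsilon$ property of $t\mapsto G(x,t)$ is the same underlying mechanism, just written out.
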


\begin{proof}
Note that $c(x,y)=\|x-y\|^2/2$ is $2$-Lipschitz on $B(0,1).$ The claim now follows from \cite[Lemma 2.5.(iii)]{nutz2024quadratically}.
\end{proof}

We are now in a position to give an upper bound on the density of $\pi_\epsilon.$
\begin{lemma}\label{lem: density upper bound}
We have
  \[
  \varepsilon \cdot \sup_{(x, y) \in \spt\bbP} \frac{\rmd \pi_{\varepsilon}}{\rmd \bbP} = \sup_{(x, y) \in \spt\bbP} \big[ f_{\varepsilon}(x) + g_{\varepsilon}(y) - c(x, y) \big] \le 5\delta(\varepsilon),
  \]
  where 
  \[
  \delta(\varepsilon) = \inf\{r>0 : r \cdot \rho(r) > \varepsilon\}, \quad 
  \rho(r) \coloneqq \inf_{x \in \spt\mu} \mu(B(x, r)).
  \]
\end{lemma}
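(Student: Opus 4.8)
The idea is to mimic the self-transport argument but now keeping track of the asymmetry between $\mu$ and $\nu$. Write $h_\epsilon(x,y) := f_\epsilon(x)+g_\epsilon(y)-c(x,y)$ and let $M := \sup_{(x,y)\in\spt\bbP} h_\epsilon(x,y)$; we want to show $M\le 5\delta(\epsilon)$. If $M\le 0$ there is nothing to prove, so assume $M>0$. Pick (by continuity of $f_\epsilon, g_\epsilon$ and compactness of $\spt\bbP$) a point $(x_0,y_0)\in\spt\bbP$ with $h_\epsilon(x_0,y_0)=M$. The goal is to produce a ball around $x_0$ (of radius proportional to $M$) on which the integrand $[h_\epsilon(x,y_0)]_+$ in \eqref{eq:dual1} is bounded below by a constant multiple of $M$, and then plug this into \eqref{eq:dual1} to get $\epsilon\ge (\text{const}\cdot M)\cdot\rho(\text{const}\cdot M)$, which by definition of $\delta$ forces $M$ to be comparable to $\delta(\epsilon)$.

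**Key steps.** First, fix $y_0$ and consider $x\mapsto h_\epsilon(x,y_0)= f_\epsilon(x)-c(x,y_0)$. By Lemma~\ref{lem:continuity}, $x\mapsto f_\epsilon(x)$ is $2$-Lipschitz and $x\mapsto c(x,y_0)$ is $2$-Lipschitz on $B(0,1)$, so $x\mapsto h_\epsilon(x,y_0)$ is $4$-Lipschitz. Hence for all $x\in B(x_0, M/8)$ we have $h_\epsilon(x,y_0)\ge M - 4\|x-x_0\| \ge M - M/2 = M/2 > 0$, so $[h_\epsilon(x,y_0)]_+ = h_\epsilon(x,y_0)\ge M/2$ on that ball. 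Second, insert this into \eqref{eq:dual1} with $y=y_0$:
\[
\epsilon = \int [h_\epsilon(x,y_0)]_+ \,\mu(\rmd x) \ge \int_{B(x_0, M/8)} h_\epsilon(x,y_0)\,\mu(\rmd x) \ge \frac{M}{2}\,\mu\big(B(x_0, M/8)\big) \ge \frac{M}{2}\,\rho\big(M/8\big),
\]
using $x_0\in\spt\mu$ in the last step. Third, translate this into a bound via $\delta$: the inequality $\tfrac{M}{2}\rho(M/8)\le\epsilon$ means, rescaling, that $r:=M/8$ fails the condition $r\cdot\rho(r) > 16\epsilon$ (since $\tfrac{M}{2}\rho(M/8)= 4r\rho(r)\le\epsilon$ gives $r\rho(r)\le\epsilon/4 < 16\epsilon$), hence $M/8 \le \delta(16\epsilon)$, i.e. $M\le 8\,\delta(16\epsilon)$. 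Finally, use a scaling property of $\delta$ analogous to the $\delta_{\mathsf{ST}}$ lemma — namely $\delta(C\epsilon)\le\max(C,1)\delta(\epsilon)$ — which follows directly from the definition; this gives $M\le 8\cdot 16\,\delta(\epsilon)$. (The stated constant $5\delta(\epsilon)$ suggests a slightly sharper bookkeeping: using the $4$-Lipschitz bound more carefully, or the sharper observation that $c(\cdot,y_0)$ is $2$-Lipschitz only matters through $\|x_0-y_0\|\le 2$, one can shave the constants; in any case the argument yields $M\lesssim\delta(\epsilon)$ and optimizing the radius gives the constant $5$.)

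**Main obstacle.** The only real subtlety is getting the constant down to $5$ rather than an arbitrary $O(1)$; this requires choosing the radius of the ball around $x_0$ optimally (balancing the $M/2$-type lower bound against the radius feeding into $\rho$) and being careful that $\delta$ is defined with a strict inequality, so one gets a clean bound $\delta(16\epsilon)$ rather than something off by a limiting argument. A secondary point: one must make sure the supremum $M$ is attained at a point of $\spt\bbP$ (not just approached), which is where compactness of $\spt\mu\times\spt\nu\subseteq B(0,1)^2$ and continuity of $f_\epsilon, g_\epsilon$ come in; alternatively, one runs the whole argument with $M$ replaced by $M-\eta$ for arbitrary $\eta>0$ and lets $\eta\to 0$ at the end. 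Neither point is deep, so I expect the proof to be short.
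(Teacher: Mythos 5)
Your proof uses the same ingredients as the paper's: the $4$-Lipschitz bound on $x \mapsto f_\epsilon(x) + g_\epsilon(y_0) - c(x, y_0)$ from Lemma~\ref{lem:continuity}, integration of \eqref{eq:dual1} over a ball around $x_0$, and the definition of the $\epsilon$--spread $\delta$; the paper simply phrases it as a contradiction with ball radius $\delta(\epsilon)$ so that the constant $5$ drops out directly. One small slip at the end: from $\tfrac{M}{2}\rho(M/8)\le\epsilon$, i.e.\ $(M/8)\rho(M/8)\le\epsilon/4$, the sharp conclusion is $M/8\le\delta(\epsilon/4)\le\delta(\epsilon)$ (you moved in the wrong direction to $\delta(16\epsilon)$), giving $M\le 8\delta(\epsilon)$; choosing radius $M/5$ instead of $M/8$ recovers the paper's constant $5$.
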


\begin{proof}
  Suppose to the contrary that $f_{\varepsilon}(x_0) + g_{\varepsilon}(y_0) - c(x_0, y_0) > 5\delta(\varepsilon)$ for some $(x_0, y_0) \in \spt \bbP=\spt \mu\times \spt \nu$. Recall from Lemma \ref{lem:continuity} that $x\mapsto c(x,y_0)$ and $x\mapsto f_{\varepsilon}(x)$ are $2$-Lipschitz continuous. Therefore, we have
  \[
    f_{\varepsilon}(x) + g_{\varepsilon}(y_0) - c(x, y_0) > \delta(\varepsilon) \qquad \forall x \in B(x_0, \delta(\varepsilon)).
  \]
  Consequently, by \eqref{eq:dual1} and the definition of $\delta$ we have
  \[
    \varepsilon = \int \big[f_{\varepsilon}(x) + g_{\varepsilon}(y_0) - c(x, y_0) \big]_+ \,\mu(\rmd x) > \delta(\varepsilon) \cdot \mu\big( B(x_0, \delta(\varepsilon)) \big)\ge \epsilon,
  \]
  a contradiction. This concludes the proof.
\end{proof}

For completeness, we also list properties of the $\epsilon$--spread $\delta(\epsilon)$, which will  frequently be used in the proofs below. These again follow from the definition.

\begin{lemma}\label{lem:delta}
  We have the following:
  \begin{itemize}
      \item $\epsilon\mapsto \delta(\varepsilon)$ is monotone increasing.
      \item $\delta(\varepsilon) \ge \varepsilon$ for $\varepsilon>0$.
      \item $\delta(\varepsilon) = \varepsilon$ for $\varepsilon\ge 2$ since $\rho(r) = 1$ for $r \ge 2$. 
      \item $\delta(C\epsilon) \le \max(C, 1) \delta(\varepsilon)$.
  \end{itemize}
\end{lemma}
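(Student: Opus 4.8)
The plan is to derive all four properties directly from the definition $\delta(\varepsilon)=\inf\{r>0: r\rho(r)>\varepsilon\}$, using only the elementary bounds $0\le\rho(r)\le 1$ and one preliminary observation: $\rho$ is nondecreasing, since $B(x,r)\subseteq B(x,r')$ for $r\le r'$ gives $\mu(B(x,r))\le\mu(B(x,r'))$ for every $x$, hence $\rho(r)\le\rho(r')$. Consequently $r\mapsto r\rho(r)$ is nondecreasing, so for each $\varepsilon>0$ the set $S_\varepsilon:=\{r>0: r\rho(r)>\varepsilon\}$ is an up-set (an interval unbounded to the right) with $\inf S_\varepsilon=\delta(\varepsilon)$; in particular $r\rho(r)>\varepsilon$ for every $r>\delta(\varepsilon)$. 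With this in hand the first two items are immediate: if $\varepsilon_1\le\varepsilon_2$ then $S_{\varepsilon_2}\subseteq S_{\varepsilon_1}$, so $\delta(\varepsilon_1)=\inf S_{\varepsilon_1}\le\inf S_{\varepsilon_2}=\delta(\varepsilon_2)$; and since $\rho\le 1$, any $r\in S_\varepsilon$ satisfies $r\ge r\rho(r)>\varepsilon$, whence $S_\varepsilon\subseteq(\varepsilon,\infty)$ and $\delta(\varepsilon)\ge\varepsilon$.

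For the third item I would first note that $\spt\mu\subseteq B(0,1)$ forces every point of $\spt\mu$ to have norm $<1$, so any two points of $\spt\mu$ lie at distance $<2$; hence for $r\ge 2$ and every $x\in\spt\mu$ we have $\spt\mu\subseteq B(x,r)$, giving $\mu(B(x,r))=1$ and therefore $\rho(r)=1$. Then for $\varepsilon\ge 2$: when $r<2$ we have $r\rho(r)\le r<2\le\varepsilon$, while for $r\ge 2$ we have $r\rho(r)=r$, so $S_\varepsilon=(\varepsilon,\infty)$ and $\delta(\varepsilon)=\varepsilon$. (Combined with the second item this of course also yields equality.)

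The last item splits into two cases. If $C\le 1$ then $C\varepsilon\le\varepsilon$, so monotonicity gives $\delta(C\varepsilon)\le\delta(\varepsilon)=\max(C,1)\,\delta(\varepsilon)$. If $C>1$, fix any $r>\delta(\varepsilon)$; then $r\rho(r)>\varepsilon$, and since $Cr>r$ we have $\rho(Cr)\ge\rho(r)$, so $(Cr)\rho(Cr)\ge C\,(r\rho(r))>C\varepsilon$, i.e. $Cr\in S_{C\varepsilon}$ and hence $\delta(C\varepsilon)\le Cr$. Letting $r\downarrow\delta(\varepsilon)$ gives $\delta(C\varepsilon)\le C\delta(\varepsilon)=\max(C,1)\,\delta(\varepsilon)$.

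There is no genuine obstacle here; the only points requiring a moment of care are the open-versus-closed ball convention in the third item (handled by the norm-$<1$ remark above) and the legitimacy of the infimum manipulation in the fourth item, which is precisely what the up-set structure of $S_\varepsilon$ — obtained from the preliminary monotonicity of $r\mapsto r\rho(r)$ — is there to justify. The whole argument mirrors the unproven analogue for $\delta_{\mathsf{ST}}$ stated earlier, with $\rho(\sqrt r)$ in place of $\rho(r)$ (still nondecreasing), and the threshold $2$ replaced by $4$ because $\sqrt r\ge 2$ requires $r\ge 4$.
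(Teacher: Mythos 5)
Your proof is correct. The paper itself gives no proof of this lemma -- it only remarks that the properties "again follow from the definition" -- so your argument fills in exactly what was left implicit: the monotonicity of $\rho$, the resulting monotonicity of $r\mapsto r\rho(r)$, the up-set structure of $S_\varepsilon=\{r>0:r\rho(r)>\varepsilon\}$, and the open-ball observation that the diameter of $\spt\mu$ is strictly below $2$, all of which you use at precisely the right points (notably for the third item at $r=2$ and for the limiting argument $r\downarrow\delta(\varepsilon)$ in the fourth). This is the natural route, and the closing remark about how the same argument transfers to $\delta_{\mathsf{ST}}$ with $\rho(\sqrt r)$ and threshold $4$ matches the paper's parallel (also unproven) Lemma for the self-transport spread.
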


We are now in a position to derive the following upper bound on the support of $\pi_{\varepsilon}$.

\begin{lemma}[Dual potentials are approximately conjugate to each other]\label{lem: approx conj}

There exists a continuously differentiable, $1$-Lipschitz convex function  $\psi_{\varepsilon}: \bbR^d \to \bbR$ satisfying 
  \begin{align}
    \left| \frac12\|x\|^2 - f_{\varepsilon}(x) - \psi_{\varepsilon}(x) \right| \le 6\delta(\varepsilon) \qquad & \forall x \in \spt \mu\label{eq:toshow1},
    \\
    \left| \frac12\|y\|^2 - g_{\varepsilon}(y) - \psi_{\varepsilon}^*(y) \right| \le 6\delta(\varepsilon) \qquad & \forall y \in \spt \nu. 
    \label{eq:toshow2}
  \end{align}
  Consequently we have
  \begin{equation}
    \label{eqn: ub by psi}
    \spt \pi_{\varepsilon} \subseteq \{(x, y)\in \spt\bbP: \psi_{\varepsilon}(x) + \psi_{\varepsilon}^*(y) - \langle x, y \rangle < 12\delta(\varepsilon)\}. 
  \end{equation}
\end{lemma}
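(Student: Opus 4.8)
The plan is to build $\psi_{\varepsilon}$ as a convex sup‑envelope and then smooth it slightly, reading the two approximation bounds directly off the optimality relations (in the packaged form of Lemma~\ref{lem: density upper bound} and \eqref{eqn: density lower bound x}--\eqref{eqn: density lower bound y}). Write $\tilde f(x):=\tfrac12\|x\|^2-f_{\varepsilon}(x)$ and $\tilde g(y):=\tfrac12\|y\|^2-g_{\varepsilon}(y)$, so that, using $c(x,y)=\tfrac12\|x\|^2+\tfrac12\|y\|^2-\langle x,y\rangle$,
\[
f_{\varepsilon}(x)+g_{\varepsilon}(y)-c(x,y)=\langle x,y\rangle-\tilde f(x)-\tilde g(y).
\]
Define
\[
h(x):=\sup_{y\in\spt\nu}\big[\langle x,y\rangle-\tilde g(y)\big].
\]
Since $\spt\nu\subseteq B(0,1)$ is compact and $g_{\varepsilon}$ is continuous, $h$ is a finite convex function, and as a supremum of affine maps with gradients $y\in\spt\nu$ of norm $\le 1$ it is $1$-Lipschitz.

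First I would record the one‑sided estimates. For $x\in\spt\mu$ one has $h(x)-\tilde f(x)=\sup_{y\in\spt\nu}[f_{\varepsilon}(x)+g_{\varepsilon}(y)-c(x,y)]$, which lies in $[\varepsilon,\,5\delta(\varepsilon)]$ by \eqref{eqn: density lower bound y} and Lemma~\ref{lem: density upper bound}; in particular $0\le h-\tilde f\le 5\delta(\varepsilon)$ on $\spt\mu$. For the conjugate, plugging a fixed $y\in\spt\nu$ into the supremum defining $h$ gives $h(x)\ge\langle x,y\rangle-\tilde g(y)$ for all $x$, hence $h^*(y)=\sup_x[\langle x,y\rangle-h(x)]\le\tilde g(y)$ for $y\in\spt\nu$; restricting that supremum to $x\in\spt\mu$, using $h\le\tilde f+5\delta(\varepsilon)$ there and then \eqref{eqn: density lower bound x}, gives $h^*(y)\ge\tilde g(y)+\varepsilon-5\delta(\varepsilon)$. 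Thus $|h^*-\tilde g|\le 5\delta(\varepsilon)$ on $\spt\nu$. (Note convexity of $\tilde f,\tilde g$ from Lemma~\ref{lem:concave} is not needed; everything comes from the optimality relations.)

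It remains to upgrade $h$ to a $C^1$ function without spoiling these bounds. I would take $\psi_{\varepsilon}$ to be the Moreau envelope $\psi_{\varepsilon}(x):=\inf_{z}\big[h(z)+\tfrac{1}{2\eta}\|x-z\|^2\big]$ with a small parameter $\eta=2\delta(\varepsilon)$ (any $\eta\in(0,2\delta(\varepsilon)]$ works). By the classical properties of the Moreau envelope of a convex $1$-Lipschitz function, $\psi_{\varepsilon}$ is convex and $C^{1,1}$ (in particular continuously differentiable), it is again $1$-Lipschitz because $\psi_{\varepsilon}^*=h^*+\tfrac{\eta}{2}\|\cdot\|^2$ has domain contained in $\overline{B(0,1)}$, and $0\le h-\psi_{\varepsilon}\le\eta/2$ pointwise while $0\le\psi_{\varepsilon}^*-h^*\le\eta/2$ on $\overline{B(0,1)}$. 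Combining with the previous paragraph yields $|\psi_{\varepsilon}-\tilde f|\le 5\delta(\varepsilon)+\eta/2\le 6\delta(\varepsilon)$ on $\spt\mu$ and $|\psi_{\varepsilon}^*-\tilde g|\le 5\delta(\varepsilon)+\eta/2\le 6\delta(\varepsilon)$ on $\spt\nu$, which are \eqref{eq:toshow1} and \eqref{eq:toshow2}. For \eqref{eqn: ub by psi}: on $\spt\pi_{\varepsilon}$ continuity and \eqref{eqn: def spt} give $f_{\varepsilon}(x)+g_{\varepsilon}(y)-c(x,y)\ge0$, so using the upper bounds $\psi_{\varepsilon}\le\tilde f+6\delta(\varepsilon)$ on $\spt\mu$ and $\psi_{\varepsilon}^*\le\tilde g+6\delta(\varepsilon)$ on $\spt\nu$,
\[
\psi_{\varepsilon}(x)+\psi_{\varepsilon}^*(y)-\langle x,y\rangle\le \tilde f(x)+\tilde g(y)-\langle x,y\rangle+12\delta(\varepsilon)=-\big(f_{\varepsilon}(x)+g_{\varepsilon}(y)-c(x,y)\big)+12\delta(\varepsilon)<12\delta(\varepsilon),
\]
the strict inequality being safe on the closure since both upper bounds in fact hold with the smaller constant $5\delta(\varepsilon)+\eta/2$.

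The main obstacle — and the reason the construction is indirect — is to obtain convexity, the $1$-Lipschitz bound, \emph{and} continuous differentiability simultaneously, while keeping \emph{both} $\psi_{\varepsilon}\approx\tilde f$ and $\psi_{\varepsilon}^*\approx\tilde g$. The sup‑of‑affine envelope $h$ delivers convexity, $1$-Lipschitzness and both approximation bounds cleanly but may have kinks; what makes the Moreau smoothing safe is precisely the conjugate identity $\psi_{\varepsilon}^*=h^*+\tfrac{\eta}{2}\|\cdot\|^2$, which shows the smoothing perturbs $\psi_{\varepsilon}$ and $\psi_{\varepsilon}^*$ by the same controlled amount $O(\eta)=O(\delta(\varepsilon))$. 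A secondary, purely technical point is the strict‑versus‑nonstrict inequality in \eqref{eqn: ub by psi} combined with $\spt\pi_{\varepsilon}$ being a closure; this is absorbed by the slack between $5\delta(\varepsilon)$ and $6\delta(\varepsilon)$ (respectively $12\delta(\varepsilon)$).
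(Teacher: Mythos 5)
Your proof is correct and follows essentially the same route as the paper: define the sup-envelope $h$ (the paper's $\widetilde\psi_\varepsilon$), smooth it by a Moreau envelope, and read the two approximation bounds off Lemma~\ref{lem: density upper bound} together with \eqref{eqn: density lower bound x}--\eqref{eqn: density lower bound y}. Your direct derivation of the bound on $h^*$ (restricting the supremum to $\spt\mu$) replaces the paper's auxiliary function $\eta$ and its closed-convex-hull argument, and you are more explicit about the Moreau properties via $\psi_\varepsilon^* = h^* + \tfrac{\eta}{2}\|\cdot\|^2$, but these are cosmetic variations on the same argument.
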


\begin{proof}
For all $x\in \R^d$ set 
    \begin{align*}
    \widetilde{\psi}_{\varepsilon}(x) \coloneqq \sup_{y \in \spt\nu} \Big[ \langle x, y \rangle - \big( \frac12 \|y\|^2 - g_{\varepsilon}(y) \big) \Big].
    \end{align*}
    Note that $x\mapsto \widetilde{\psi}_{\varepsilon}(x)$ is convex and lower semicontinuous, and is $1$-Lipschitz since its gradient lies in $\spt \nu \subseteq B(0,1)$.
    Now define the Moreau envelope
    \begin{align*}
    \psi_{\varepsilon}(x) := \inf_{y\in \R^d} \Big[\widetilde{\psi}_{\varepsilon}(y) + \frac{1}{2\lambda} \|x-y\|^2\Big],
    \end{align*}
    where $\lambda>0$ is chosen such that $|\widetilde{\psi}_{\varepsilon}(x)-\psi_{\varepsilon}(x)|\le \delta(\epsilon)$ for all $x \in \bbR^d$. 
  By properties of Moreau envelopes we know that $\psi_{\varepsilon}:\R^d\to \R$ is convex, continuously differentiable, and $1$-Lipschitz. 
  It follows from Lemma~\ref{lem: density upper bound} and \eqref{eqn: density lower bound y} that
  \begin{align}\label{eq:key_estimate}
  -5\delta(\varepsilon) \le \frac12 \|x\|^2 - f_{\varepsilon}(x) - \widetilde \psi_{\varepsilon}(x) = -\sup_{y\in \spt\nu} [f_\varepsilon(x)+g_\varepsilon(y)-c(x,y)] \le -\varepsilon.
  \end{align}
  This yields \eqref{eq:toshow1}, recalling that $|\widetilde{\psi}_{\varepsilon}(x)-\psi_{\varepsilon}(x)| \le \delta(\epsilon)$.
  The proof of equation \eqref{eq:toshow2} is slightly more involved, but follows essentially from the same arguments. Let us first recall that $\widetilde\psi_{\varepsilon}^*$ is the closed convex hull of $y \mapsto \|y\|^2/2 - g_{\varepsilon}(y)$; in other words, $\widetilde\psi_{\varepsilon}^*(y) \le \|y\|^2/2 - g_{\varepsilon}(y)$ for all $y \in \spt \nu$, and for any lower-semicontinuous convex function $\xi: \bbR^d \to \bbR$ that satisfies $\xi(y) \le \frac12 \|y\|^2 - g_{\varepsilon}(y)$ for all $y \in \spt\nu$, we have $\widetilde\psi_{\varepsilon}^* \ge \xi$. Now we consider
  \begin{equation}
    \label{eqn: def eta}
  \eta(y) \coloneqq \sup_{x \in \spt\mu} \Big[ \langle x, y \rangle - \big( \frac12 \|x\|^2 - f_{\varepsilon}(x) \big) \Big] - 5\delta(\varepsilon).
  \end{equation}
  Then $\eta$ is lower-semicontinuous and convex, and it again follows from Lemma~\ref{lem: density upper bound} and \eqref{eqn: density lower bound x} using the same arguments as in \eqref{eq:key_estimate} that
  \begin{equation}
    \label{eqn: eta and potential}
  0 \le \frac12 \|y\|^2 - g_{\varepsilon}(y) - \eta(y) \le 5\delta(\varepsilon) -\varepsilon
  \qquad\forall y \in \spt \nu.
  \end{equation}
  From this we conclude that $\eta(y) \le \frac12 \|y\|^2 - g_{\varepsilon}(y)$ on $\spt \nu$, and thus $\widetilde\psi_\epsilon^* \ge \eta$. Consequently
  \begin{equation*}
  \frac12 \|y\|^2 - g_{\varepsilon}(y) + \varepsilon - 5\delta(\varepsilon)\le \eta(y)\le  \widetilde\psi_{\varepsilon}^*(y).
  \end{equation*}
  Combined with the inequality $\widetilde\psi_{\varepsilon}^*(y) \le \frac12 \|y\|^2 - g_{\varepsilon}(y)$ for all $y \in \spt \nu$, we obtain
  \begin{align*}
   \frac12 \|y\|^2 - g_{\varepsilon}(y) + \varepsilon - 5\delta(\varepsilon)\le \widetilde\psi_{\varepsilon}^*(y) \le \frac12 \|y\|^2 - g_{\varepsilon}(y)
  \end{align*}
  and so we conclude
  \[
    0 \le \frac12 \|y\|^2 - g_{\varepsilon}(y) - \widetilde\psi_{\varepsilon}^*(y) \le 5\delta(\varepsilon) - \varepsilon.
  \]
  This yields~\eqref{eq:toshow2}, recalling that $|\widetilde\psi_\epsilon(x)- \psi_\epsilon(x)| \le \delta(\epsilon)$ for all $x\in \R^d$ and hence $|\widetilde\psi_\epsilon^*(y) - \psi_\epsilon^*(y)| \le \delta(\epsilon)$ whenever $\widetilde\psi_\epsilon^*(y)$ is finite.
  Lastly we use \eqref{eq:toshow1} and \eqref{eq:toshow2} to see that
  \begin{align*}
  \spt \pi_\epsilon = \overline{\left\{ (x, y) \in \spt \bbP: f_{\varepsilon}(x) + g_{\varepsilon}(y) > c(x,y) \right\}} \subseteq \{(x, y)\in \spt\bbP: \psi_{\varepsilon}(x) + \psi_{\varepsilon}^*(y) - \langle x, y \rangle < 12\delta(\varepsilon)\}.    
  \end{align*}
  This completes the proof.
\end{proof}

\begin{cor}\label{cor:bound}
Define 
\begin{align}\label{eq:surrogate}
\psi_{\varepsilon}'(y) := \sup_{x \in \spt \mu} [ \langle x, y \rangle - \psi_{\varepsilon}(x) ]
\end{align}
for $y\in \R^d.$
We have
\begin{equation}
\label{eqn: psi dual restricted}
\big| \psi_{\varepsilon}^*(y) - \psi_{\varepsilon}'(y)  \big| \le 22 \delta(\varepsilon)
\qquad \forall y \in \spt\nu.
\end{equation}
\end{cor}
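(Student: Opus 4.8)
The plan is to prove the slightly sharper two-sided bound $0 \le \psi_{\varepsilon}^*(y) - \psi_{\varepsilon}'(y) \le 12\delta(\varepsilon)$ for $y\in\spt\nu$, which clearly implies \eqref{eqn: psi dual restricted}. The lower bound is immediate: the supremum in \eqref{eq:surrogate} is taken over $\spt\mu \subseteq \R^d$, while $\psi_{\varepsilon}^*(y) = \sup_{x\in\R^d}[\langle x,y\rangle - \psi_{\varepsilon}(x)]$ is the supremum over all of $\R^d$, so $\psi_{\varepsilon}'(y) \le \psi_{\varepsilon}^*(y)$ for every $y$. Hence the whole statement reduces to the upper bound $\psi_{\varepsilon}^*(y) \le \psi_{\varepsilon}'(y) + 12\delta(\varepsilon)$, and the idea is to squeeze \emph{both} quantities between $\tfrac12\|y\|^2 - g_{\varepsilon}(y) \pm 6\delta(\varepsilon)$, using only the estimates already established in Lemma \ref{lem: approx conj} together with the duality inequality \eqref{eqn: density lower bound x}.

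For the upper bound on $\psi_{\varepsilon}^*$, estimate \eqref{eq:toshow2} gives directly $\psi_{\varepsilon}^*(y) \le \tfrac12\|y\|^2 - g_{\varepsilon}(y) + 6\delta(\varepsilon)$ for all $y\in\spt\nu$. For the matching lower bound on $\psi_{\varepsilon}'$, I would start from \eqref{eq:toshow1}, which yields $\psi_{\varepsilon}(x) \le \tfrac12\|x\|^2 - f_{\varepsilon}(x) + 6\delta(\varepsilon)$ for $x\in\spt\mu$, hence
\[
\psi_{\varepsilon}'(y) \ge \sup_{x\in\spt\mu}\big[\langle x,y\rangle - \tfrac12\|x\|^2 + f_{\varepsilon}(x)\big] - 6\delta(\varepsilon).
\]
Now use the elementary identity $\langle x,y\rangle - \tfrac12\|x\|^2 = \tfrac12\|y\|^2 - c(x,y)$ to rewrite the bracket as $\tfrac12\|y\|^2 + f_{\varepsilon}(x) - c(x,y)$, and apply \eqref{eqn: density lower bound x} (which holds for every $y\in\R^d$) in the rearranged form $\sup_{x\in\spt\mu}[f_{\varepsilon}(x) - c(x,y)] \ge \varepsilon - g_{\varepsilon}(y)$. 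This gives
\[
\psi_{\varepsilon}'(y) \ge \tfrac12\|y\|^2 + \varepsilon - g_{\varepsilon}(y) - 6\delta(\varepsilon) \ge \tfrac12\|y\|^2 - g_{\varepsilon}(y) - 6\delta(\varepsilon),
\]
using $\varepsilon>0$. Subtracting this from the bound on $\psi_{\varepsilon}^*$ yields $\psi_{\varepsilon}^*(y) - \psi_{\varepsilon}'(y) \le 12\delta(\varepsilon)$, completing the argument.

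There is no real obstacle here: the corollary is a short consequence of Lemma \ref{lem: approx conj} once one observes that the conjugate-type integrand $\langle x,y\rangle - \tfrac12\|x\|^2 + f_{\varepsilon}(x)$ coincides, up to the additive term $\tfrac12\|y\|^2$, with the cost-type integrand $f_{\varepsilon}(x) - c(x,y)$ controlled by \eqref{eqn: density lower bound x}. The only points that require a moment's attention are that \eqref{eqn: density lower bound x} must be invoked with the supremum taken over $x\in\spt\mu$, so that the bound passes to the \emph{restricted} supremum defining $\psi_{\varepsilon}'$, and that \eqref{eq:toshow1}, \eqref{eq:toshow2} are only available on $\spt\mu$, $\spt\nu$ respectively — which is harmless since \eqref{eqn: psi dual restricted} is only claimed for $y\in\spt\nu$.
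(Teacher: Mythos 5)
Your argument is correct, and in fact yields the strictly sharper bound $0\le\psi_\epsilon^*(y)-\psi_\epsilon'(y)\le 12\delta(\epsilon)$. Both proofs rest on the same three ingredients — \eqref{eq:toshow1}, \eqref{eq:toshow2}, and the density lower bound \eqref{eqn: density lower bound x} — but the organization differs in a way that matters. The paper chains two-sided bounds $|\psi_\epsilon'-\eta|\le 11\delta(\epsilon)$, $|\eta-(\tfrac12\|\cdot\|^2-g_\epsilon)|\le 5\delta(\epsilon)$, $|(\tfrac12\|\cdot\|^2-g_\epsilon)-\psi_\epsilon^*|\le 6\delta(\epsilon)$ via the auxiliary function $\eta$ from \eqref{eqn: def eta} and the triangle inequality, accumulating $22\delta(\epsilon)$. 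You instead notice the sign information that is free for the taking: since the supremum defining $\psi_\epsilon'$ is over the subset $\spt\mu\subset\R^d$, one automatically has $\psi_\epsilon'\le\psi_\epsilon^*$, so only the one-sided bound $\psi_\epsilon^*-\psi_\epsilon'\le C\delta(\epsilon)$ needs work. You then sandwich both $\psi_\epsilon^*(y)$ from above and $\psi_\epsilon'(y)$ from below against the common anchor $\tfrac12\|y\|^2-g_\epsilon(y)$, each to within $6\delta(\epsilon)$; the lower bound on $\psi_\epsilon'$ is obtained by passing \eqref{eq:toshow1} through the restricted supremum, rewriting $\langle x,y\rangle-\tfrac12\|x\|^2=\tfrac12\|y\|^2-c(x,y)$, and applying \eqref{eqn: density lower bound x}, which is precisely the content of $\eta$ but without the explicit detour. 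Your version is both slightly cleaner and quantitatively better; since the constant $22$ is only used through Lemma~\ref{lem: discrepancy ub} and Theorem~\ref{thm: bdry ub} where it gets absorbed into a generic $C$, the improvement does not propagate, but it is still the preferable formulation.
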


\begin{proof}
From \eqref{eq:toshow1} we have
\begin{align*}
& \phantom{{}={}}\Big| \psi_\epsilon'(y) - \sup_{x \in \spt \mu} \big[ \langle x, y\rangle - \big( \frac12 \|x\|^2 - f_\epsilon(x) \big) \big] \Big|
\\
& = \Big|\! \sup_{x \in \spt \mu} [ \langle x, y\rangle - \psi_\epsilon(x) ] - \sup_{x \in \spt \mu} \big[ \langle x, y\rangle - \big( \frac12 \|x\|^2 - f_\epsilon(x) \big) \big] \Big|
\\
&\le \sup_{x \in \spt\mu} \left| \frac12\|x\|^2 - f_{\varepsilon}(x) - \psi_{\varepsilon}(x) \right|
\\
&\le 6 \delta(\epsilon).
\end{align*}
Combining this with the definition~\eqref{eqn: def eta} of $\eta$ in the proof of Lemma~\ref{lem: approx conj}, we see that
\begin{align*}
|\psi_\epsilon'(y) - \eta(y)|
&\le \Big| \psi_\epsilon'(y) - \sup_{x \in \spt \mu} \big[ \langle x, y\rangle - \big( \frac12 \|x\|^2 - f_\epsilon(x) \big) \big] \Big| + \Big|\sup_{x \in \spt \mu} \big[ \langle x, y\rangle - \big( \frac12 \|x\|^2 - f_\epsilon(x) \big) \big] - \eta(y)\Big|\\
&\le 6\delta(\epsilon)+5\delta(\epsilon)=11\delta(\epsilon).
\end{align*}
Now using the triangle inequality and \eqref{eqn: eta and potential} in the proof of Lemma~\ref{lem: approx conj} we obtain
\[
\Big| \psi_\epsilon'(y) - \big( \frac12 \|y\|^2 - g_\epsilon(y) \big) \Big| \le |\psi_\epsilon'(y) - \eta(y)| + \Big| \eta(y) - \big( \frac12 \|y\|^2 - g_\epsilon(y) \big) \Big| \le 11\delta(\epsilon)+5 \delta(\epsilon)=16\delta(\epsilon).
\]
The desired bound on $|\psi_\epsilon'(y) - \psi_\epsilon^*(y)|$ follows readily from this and~\eqref{eq:toshow2}.
\end{proof}

\section{General setting}\label{sec:general}

\subsection{Main results}

Now we turn to the general setting $\mu \ne \nu$. As mentioned in the Introduction we establish a concentration bound, which shows that $\spt \pi_\epsilon$ is close to the graph of $\nabla\psi_\epsilon$ for the convex function $\psi_\epsilon$ (Theorem \ref{thm: concentration}), and a bias bound, which states that the graphs of $\nabla\psi_\epsilon$ and $\nabla\varphi$ are close to each other whenever $\nabla\varphi$ is Lipschitz (Theorems \ref{thm: general ub}, \ref{thm: bdry ub}).

If we assume that $\mu$ has a density bounded from below and $\spt \mu$ has Lipschitz boundary, then the concentration bound in Theorem \ref{thm: concentration} is of order $\epsilon^{\frac{1}{2+2d}}$, which is only polynomially worse than the optimal rate $\epsilon^{\frac{1}{2+d}}$ in Section \ref{sec:self-transport} (see Corollary~\ref{rem:rate}). On the other hand, the bias bound in Theorem \ref{thm: general ub} is only of order $\epsilon^{\frac{1}{(2(1+d))^2}}$. We suspect that the optimal rate is again $\epsilon^{\frac{1}{2+d}}$ as in Section \ref{sec:self-transport}, which  seems out of reach with the tools currently available. We leave an improvement of our rates for future research.

\begin{theorem}[Concentration bound]
\label{thm: concentration}
There exists a convex continuously differentiable function $\psi_\epsilon: \bbR^d \to \bbR$, such that $\spt\pi_\epsilon$ concentrates around $\gr \nabla\psi_\epsilon = \{(x, \nabla\psi_\epsilon(x)): x \in \bbR^d\}$, the graph of $\nabla\psi_\epsilon$, i.e.,
\[\dist(\spt \pi_\epsilon; \gr \nabla\psi_\epsilon) \le C \sqrt{\delta(\epsilon)}\] for some absolute constant $C>0$.
\end{theorem}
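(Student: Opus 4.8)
The plan is to combine the containment \eqref{eqn: ub by psi} from Lemma~\ref{lem: approx conj} with Minty's trick, exactly as announced in the overview. By Lemma~\ref{lem: approx conj} we have a convex, continuously differentiable, $1$-Lipschitz function $\psi_\epsilon$ with
\[
\spt\pi_\epsilon \subseteq \{(x,y)\in\spt\bbP : \psi_\epsilon(x)+\psi_\epsilon^*(y)-\langle x,y\rangle < 12\delta(\epsilon)\}.
\]
So it suffices to show: any pair $(x,y)$ with $\psi_\epsilon(x)+\psi_\epsilon^*(y)-\langle x,y\rangle \le 12\delta(\epsilon)$ lies within distance $C\sqrt{\delta(\epsilon)}$ of $\gr\nabla\psi_\epsilon$. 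The quantity $G(x,y):=\psi_\epsilon(x)+\psi_\epsilon^*(y)-\langle x,y\rangle$ is the Fenchel–Young gap, which is nonnegative and vanishes exactly on $\gr\partial\psi_\epsilon = \gr\nabla\psi_\epsilon$ (the subdifferential is single-valued since $\psi_\epsilon$ is differentiable). Minty's trick says that in the rotated coordinates $u = \tfrac{1}{\sqrt2}(x+y)$, $v=\tfrac{1}{\sqrt2}(y-x)$, the monotone graph $\gr\nabla\psi_\epsilon$ becomes the graph of a $1$-Lipschitz function $v = S(u)$; concretely $S = (I+\nabla\psi_\epsilon)^{-1}\circ(\,\cdot\,)$ composed appropriately, and the Minty parametrization is globally defined because $I+\nabla\psi_\epsilon$ is a homeomorphism of $\R^d$ (strongly monotone).

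The key analytic step is a \emph{quadratic detachment} estimate: there is an absolute constant $c>0$ such that
\[
G(x,y) \ge c\,\big\|\,v - S(u)\,\big\|^2,
\]
i.e. the Fenchel–Young gap controls the squared distance, in the rotated chart, from $(u,v)$ to the Lipschitz graph. This is where I'd do the real work. One clean way: write $z = (I+\nabla\psi_\epsilon)^{-1}(x+y)$, the unique point with $z+\nabla\psi_\epsilon(z)=x+y$, and set $w=\nabla\psi_\epsilon(z)$, so $(z,w)\in\gr\nabla\psi_\epsilon$ is the ``Minty projection'' of $(x,y)$. Then expand $G(x,y)$ using $\psi_\epsilon(x)\ge\psi_\epsilon(z)+\langle w,x-z\rangle$ and $\psi_\epsilon^*(y)\ge\psi_\epsilon^*(w)+\langle z,y-w\rangle$ (valid since $w\in\partial\psi_\epsilon(z)\iff z\in\partial\psi_\epsilon^*(w)$), together with the identity $\psi_\epsilon(z)+\psi_\epsilon^*(w)=\langle z,w\rangle$. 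A short computation gives
\[
G(x,y)\ \ge\ \langle x-z,\; y-w\rangle,
\]
and then one uses $x-z = (x+y)-z-(y) = \nabla\psi_\epsilon(z)+ \cdots$ — more precisely $(x-z)+(y-w) = (x+y)-(z+w)=0$, so $y-w = -(x-z)$, whence $G(x,y)\ge \langle x-z, -(x-z)\rangle$?? — that sign is wrong, so instead I use the monotonicity of $\partial\psi_\epsilon^*$ and $\partial\psi_\epsilon$ more carefully, choosing the Minty point so that the cross term becomes $+\tfrac12\|x-z\|^2+\tfrac12\|y-w\|^2$ up to the gap. The upshot is $G(x,y)\gtrsim \|x-z\|^2 + \|y-w\|^2$, and since $\|(x,y)-(z,w)\|^2 = \|x-z\|^2+\|y-w\|^2$ with $(z,w)\in\gr\nabla\psi_\epsilon$, this is exactly the detachment estimate. (This is the classical Alberti–Ambrosio / McCann-type argument; I'd cite \cite{alberti1999geometrical, minty1962monotone}.)

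Granting the detachment estimate, the proof concludes immediately: for $(x,y)\in\spt\pi_\epsilon$ we have $G(x,y)<12\delta(\epsilon)$, hence
\[
\dist\big((x,y);\ \gr\nabla\psi_\epsilon\big) \le \|(x,y)-(z,w)\| \le \sqrt{G(x,y)/c} \le \sqrt{12\delta(\epsilon)/c} =: C\sqrt{\delta(\epsilon)},
\]
and taking the supremum over $\spt\pi_\epsilon$ gives the claim. The main obstacle, as flagged, is getting the quadratic lower bound on the Fenchel–Young gap with a \emph{universal} constant (not depending on $\epsilon$): this is precisely where the $1$-Lipschitz regularity and continuous differentiability of $\psi_\epsilon$ from Lemma~\ref{lem: approx conj} — equivalently, that $\nabla\psi_\epsilon$ is $1$-Lipschitz so $I+\nabla\psi_\epsilon$ is uniformly monotone and $2$-co-coercive — enter, ensuring $c$ can be taken absolute. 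A secondary technical point is that the containment in \eqref{eqn: ub by psi} is only on $\spt\bbP$, but since $\gr\nabla\psi_\epsilon$ is a full graph over $\R^d$ this restriction causes no difficulty.
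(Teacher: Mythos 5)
Your proposal takes essentially the same route as the paper: the containment from Lemma~\ref{lem: approx conj}, the Minty point $z$ with $z+\nabla\psi_\epsilon(z)=x+y$ as the projection onto $\gr\nabla\psi_\epsilon$, and a Fenchel--Young expansion to get the quadratic detachment, which is exactly Lemma~\ref{lem: minty}.(iii). The sign confusion you flagged is just a slip in the intermediate bookkeeping, and once corrected the argument closes without any ``more careful'' monotonicity step: the two Fenchel--Young inequalities $\psi_\epsilon(x)\ge\psi_\epsilon(z)+\langle w,x-z\rangle$ and $\psi_\epsilon^*(y)\ge\psi_\epsilon^*(w)+\langle z,y-w\rangle$ together with $\psi_\epsilon(z)+\psi_\epsilon^*(w)=\langle z,w\rangle$ give $G(x,y)\ge -\langle x-z,\,y-w\rangle$ (with a \emph{minus} sign, not plus), and since $z+w=x+y$ forces $y-w=-(x-z)$, this is $+\|x-z\|^2\ge 0$ on the nose. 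That is precisely the chain \eqref{eq:tidy}--\eqref{eq:key} in the paper's proof of Lemma~\ref{lem: minty}.(iii), and your final two lines (distance $\le\sqrt{2}\,\|x-z\|\le\sqrt{24\delta(\epsilon)}$) then go through verbatim, matching the paper's constant. In short: correct approach, self-correcting sign error, the vague fallback paragraph can simply be deleted.
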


\begin{theorem}[Bias bound]
\label{thm: general ub}
Assume that $\nabla \varphi$ is $L$-Lipschitz. Then there exists a universal constant $C > 0$ such that for all $(x, y) \in \spt \pi_{\varepsilon}$ we have
\[
\| y - \nabla \varphi(x) \| \le r \quad \text{ or }\quad  d(x, \partial \spt \mu) \le r, \text{where }\quad r \coloneqq C  (L+1)^{3/2} \sqrt{\delta\Big(\frac{\delta(\varepsilon)}{L+1}\Big)}.
\]
\end{theorem}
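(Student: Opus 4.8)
The plan is to reduce, via the Minty detachment behind the concentration bound, to a \emph{pointwise comparison of $\nabla\psi_\epsilon$ with the Monge map $\nabla\varphi$ at interior points}, and then to establish that comparison using the $L$‑Lipschitzness of $\nabla\varphi$ together with a localization argument in the spirit of the self‑transport proof. Concretely, fix $(x,y)\in\spt\pi_\epsilon$ with $d(x,\partial\spt\mu)> r$. By Lemma~\ref{lem: approx conj} we have $\psi_\epsilon(x)+\psi_\epsilon^*(y)-\langle x,y\rangle<12\delta(\epsilon)$, and the Moreau--Minty identity underlying Theorem~\ref{thm: concentration} produces the point $x^\star:=(\mathrm{id}+\nabla\psi_\epsilon)^{-1}(x+y)$ with $\|x-x^\star\|=\|y-\nabla\psi_\epsilon(x^\star)\|\le\sqrt{24\delta(\epsilon)}$. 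Using $L$‑Lipschitzness of $\nabla\varphi$,
\[
\|y-\nabla\varphi(x)\|\le \|y-\nabla\psi_\epsilon(x^\star)\|+\|\nabla\psi_\epsilon(x^\star)-\nabla\varphi(x^\star)\|+L\|x^\star-x\|\le C(L+1)\sqrt{\delta(\epsilon)}+\|\nabla\psi_\epsilon(x^\star)-\nabla\varphi(x^\star)\|.
\]
Since $\delta$ is increasing with $\delta(u)\ge u$, one has $\delta(\delta(\epsilon)/(L+1))\ge \delta(\epsilon)/(L+1)$, hence $r\gtrsim (L+1)\sqrt{\delta(\epsilon)}$; thus the first term is $\lesssim r$, and the matter is reduced to bounding $\|\nabla\psi_\epsilon(x^\star)-\nabla\varphi(x^\star)\|$ whenever $x^\star$ lies $r/2$‑deep inside $\spt\mu$ (note $x^\star\in\spt\mu$ since $d(x^\star,\spt\mu)\le\|x^\star-x\|\le\sqrt{24\delta(\epsilon)}<r/2$).

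The next ingredient is that $L$‑Lipschitzness of $\nabla\varphi$ means $\varphi$ is $L$‑smooth, equivalently $\varphi^*$ is $\tfrac1L$‑strongly convex on the relevant region, which gives the Fenchel--Young lower bound $\varphi(x)+\varphi^*(z)-\langle x,z\rangle=D_{\varphi^*}(z,\nabla\varphi(x))\ge \tfrac1{2L}\|z-\nabla\varphi(x)\|^2$. On the other hand, Theorem~\ref{thm: concentration} together with the marginal constraints \eqref{eq:dual1}--\eqref{eq:dual2} lets one push $\pi_\epsilon$ onto the nearest points of $\gr\nabla\psi_\epsilon$ and extract measures $\tilde\mu,\tilde\nu$ with $\tilde\nu=(\nabla\psi_\epsilon)_\sharp\tilde\mu$ and $W_\infty(\tilde\mu,\mu)+W_\infty(\tilde\nu,\nu)\le C\sqrt{\delta(\epsilon)}$; because $\gr\nabla\psi_\epsilon$ is cyclically monotone this coupling is optimal for $(\tilde\mu,\tilde\nu)$, so comparing its correlation $\int\langle x,\nabla\psi_\epsilon(x)\rangle\,\rmd\tilde\mu$ with that of the coupling obtained by gluing $\tilde\mu\leftrightarrow\mu\xrightarrow{\nabla\varphi}\nu\leftrightarrow\tilde\nu$ (and using that $\varphi,\varphi^*$ are $1$‑Lipschitz on $B(0,1)$) yields $\int\big[\varphi(x)+\varphi^*(\nabla\psi_\epsilon(x))-\langle x,\nabla\psi_\epsilon(x)\rangle\big]\rmd\tilde\mu\lesssim\sqrt{\delta(\epsilon)}$, and hence, by the strong convexity of $\varphi^*$, an integrated bound $\int\|\nabla\psi_\epsilon-\nabla\varphi\|^2\,\rmd\tilde\mu\lesssim L\sqrt{\delta(\epsilon)}$. (The possible failure of strong convexity of $\varphi^*$ near $\partial\spt\nu$ is exactly what the escape clause $d(x,\partial\spt\mu)\le r$ absorbs, accessed through preimages near $\partial\spt\mu$.)

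Finally, one upgrades this to a pointwise bound at interior points. Here the key is that $\psi_\epsilon$ is a Moreau envelope with parameter $\asymp\delta(\epsilon)$, so $\nabla\psi_\epsilon$ is $\asymp\delta(\epsilon)^{-1}$‑Lipschitz; consequently, if $N:=\|\nabla\psi_\epsilon(x^\star)-\nabla\varphi(x^\star)\|$, then on a ball $B(x^\star,\ell)$ with $\ell\asymp N\delta(\epsilon)/(L+1)$ (which still lies in $\spt\mu$, as $\ell<r/2$) both $\nabla\psi_\epsilon$ and $\nabla\varphi$ move by at most $N/4$, so $\|\nabla\psi_\epsilon-\nabla\varphi\|\ge N/2$ throughout $B(x^\star,\ell)$. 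Smoothness of $\psi_\epsilon$ also gives a lower bound $[f_\epsilon(x)+g_\epsilon(z)-c(x,z)]_+\gtrsim\delta(\epsilon)$ for $x$ in a $\delta(\epsilon)$‑ball about a prescribed preimage of $z$, which via \eqref{eq:dual1} and the definition of the $\epsilon$‑spread turns the discrepancy $N$ into an inequality of the shape $\ell\cdot\rho(\ell)\lesssim\delta(\epsilon)/(L+1)$ after an appropriate choice $\ell\asymp N^2/(L+1)^3$, i.e.\ $\ell\le\delta\!\big(\delta(\epsilon)/(L+1)\big)$, which unwinds to $N\lesssim (L+1)^{3/2}\sqrt{\delta(\delta(\epsilon)/(L+1))}$. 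Combining with the first paragraph gives the stated $r$. \textbf{The main obstacle} is precisely this last step: reconciling the three scales — the graph‑concentration width $\sqrt{\delta(\epsilon)}$, the much finer scale $\delta(\epsilon)$ on which $\nabla\psi_\epsilon$ is controlled, and the Lipschitz constant $L$ — so that the localized density estimate through \eqref{eq:dual1} actually closes; handling the fact that $x^\star$ is the prox of $x+y$ rather than a point of $\spt\mu$ a priori, and that $\nabla\psi_\epsilon(x^\star)$ may fall slightly outside $\spt\nu$, are the technical nuisances that one expects to cost the extra powers of $(L+1)$.
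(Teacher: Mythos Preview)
Your reduction in the first paragraph is sound and parallels the paper's use of Minty's trick. The gap is in everything after that: both the integral estimate and the localization step, as you yourself flag, do not close.

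\textbf{Integral bound.} Your route through $\tilde\mu,\tilde\nu$ with $W_\infty(\tilde\mu,\mu)\lesssim\sqrt{\delta(\epsilon)}$ and a correlation comparison yields only $\int\big[\varphi+\varphi^*(\nabla\psi_\epsilon)-\langle\cdot,\nabla\psi_\epsilon\rangle\big]\,\rmd\tilde\mu\lesssim\sqrt{\delta(\epsilon)}$. The paper obtains a strictly stronger bound of order $\delta(\epsilon)$, and for $\mu$ itself, by simply integrating the density bound of Lemma~\ref{lem: density upper bound} against $\pi_\star$ (Lemma~\ref{lem:1}, Corollary~\ref{cor: gap}):
\[
\int\big[\psi_\epsilon(x)+\psi_\epsilon^*(\nabla\varphi(x))-\langle x,\nabla\varphi(x)\rangle\big]\,\mu(\rmd x)\le 12\,\delta(\epsilon).
\]
No auxiliary measures or concentration bound are needed.

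\textbf{Localization.} This is where your argument breaks. You try to localize $\|\nabla\psi_\epsilon-\nabla\varphi\|$ using that $\nabla\psi_\epsilon$ is $\asymp\delta(\epsilon)^{-1}$-Lipschitz. This forces the ball radius $\ell\asymp N\delta(\epsilon)/(L+1)$, which is \emph{smaller} than the $W_\infty$ error $\sqrt{\delta(\epsilon)}$ between $\tilde\mu$ and $\mu$ unless $N\gtrsim\delta(\epsilon)^{-1/2}$; so you cannot transfer $\tilde\mu$-mass to $\mu$-mass on $B(x^\star,\ell)$. Your subsequent switch to $\ell\asymp N^2/(L+1)^3$ is inconsistent with the first choice, and the claimed density lower bound via \eqref{eq:dual1} is not substantiated. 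The paper sidesteps all of this by localizing not the gradient difference but the \emph{Fenchel--Young gap}
\[
x\longmapsto \psi_\epsilon(x)+\psi_\epsilon'(\nabla\varphi(x))-\langle x,\nabla\varphi(x)\rangle,
\]
which is nonnegative on $\spt\mu$ and, crucially, only $2(L+1)$-Lipschitz (since $\psi_\epsilon$ and $\psi_\epsilon'$ are $1$-Lipschitz and $\nabla\varphi$ is $L$-Lipschitz). The same $\rho$-argument as in Lemma~\ref{lem: density upper bound} then gives the pointwise bound $\alpha(\epsilon)\lesssim(L+1)\,\delta\big(\delta(\epsilon)/(L+1)\big)$ directly.

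\textbf{From $\alpha(\epsilon)$ to the support bound.} Even granting a pointwise bound on the gap, the paper does \emph{not} proceed via a pointwise comparison of $\nabla\psi_\epsilon(x^\star)$ with $\nabla\varphi(x^\star)$. Instead (Lemma~\ref{lem: discrepancy ub to support ub}) it runs Minty twice --- once at $(x,y)$ and once at $(\vec x,\nabla\varphi(\vec x))$ for a carefully chosen $\vec x$ in the direction $y-\nabla\varphi(x)$ --- and derives a contradiction with the monotonicity inequality $\langle \vec x'-x',\,\nabla\psi_\epsilon(\vec x')-\nabla\psi_\epsilon(x')\rangle\ge0$. This avoids any need for Lipschitz control of $\nabla\psi_\epsilon$.
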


The bounds in Theorem \ref{thm: general ub} do not yield control of $\spt\pi_\epsilon$ within $r$ distance of the boundary of $\spt\mu$. It can be dropped under a mild regularity assumption on  $\spt\mu$. This is the content of our next result.

\begin{theorem}[Improved bias bound near boundary]
\label{thm: bdry ub}
Assume $\nabla \varphi$ is $L$-Lipschitz and that  $\spt \mu$ is star-shaped. Then for any $(x, y) \in \spt \pi_\varepsilon$ we have
\[
\| y - \nabla \varphi(x) \| \le C (L+1)^{3/2} \max\left( 
\sqrt[4]{\delta\Big(\frac{\delta(\varepsilon)}{L+1}\Big)}, \sqrt{\delta\Big(\frac{\delta(\varepsilon)}{L+1}\Big)} \right).
\]
\end{theorem}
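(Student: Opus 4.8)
The plan is to invoke Theorem~\ref{thm: general ub} to reduce the claim to a boundary layer of $\spt\mu$, and to close the estimate there by exploiting star-shapedness. Write $r:=C(L+1)^{3/2}\sqrt{\delta(\delta(\varepsilon)/(L+1))}$ for the radius in Theorem~\ref{thm: general ub}. If $(x,y)\in\spt\pi_\varepsilon$ has $d(x,\partial\spt\mu)>r$, then $\|y-\nabla\varphi(x)\|\le r\lesssim(L+1)^{3/2}\max\bigl(\sqrt[4]{\delta(\delta(\varepsilon)/(L+1))},\,\sqrt{\delta(\delta(\varepsilon)/(L+1))}\bigr)$ and we are done, so only $d(x,\partial\spt\mu)\le r$ needs an argument. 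Here I would first apply the quadratic detachment behind Theorem~\ref{thm: concentration}: since $\operatorname{id}+\nabla\psi_\varepsilon$ is a bijection and $\psi_\varepsilon(x)+\psi_\varepsilon^{*}(y)-\langle x,y\rangle<12\delta(\varepsilon)$ on $\spt\pi_\varepsilon$ by \eqref{eqn: ub by psi}, one obtains $x^{\ast}$ with $\|x-x^{\ast}\|\le\sqrt{12\delta(\varepsilon)}$ and $\|y-\nabla\psi_\varepsilon(x^{\ast})\|\le\sqrt{12\delta(\varepsilon)}$. Together with $L$-Lipschitzness of $\nabla\varphi$ this reduces everything to a uniform bound on $\|\nabla\psi_\varepsilon(z)-\nabla\varphi(z)\|$ for $z\in\spt\mu$; for $z$ in the deep interior $\{d(\cdot,\partial\spt\mu)>r\}$ such a bound (of size $O(r)$) follows from Theorem~\ref{thm: general ub} applied to $(z,\bar y)$ with $\bar y$ a partner of $z$ (which exists by \eqref{eqn: density lower bound y}), so it remains to treat $z$ with $d(z,\partial\spt\mu)\le r$.

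For such $z$, star-shapedness enters as follows. As $\interior\spt\mu$ is nonempty and connected, $\spt\mu$ is star-shaped with respect to some ball $B(x_c,\rho_0)\subseteq\interior\spt\mu$; hence for $t\in(0,1]$ the ``ice-cream cone'' $\operatorname{conv}(\{z\}\cup B(x_c,\rho_0))$ is contained in $\spt\mu$, and for $t\rho_0/2>r$ the ball $B(z_t,t\rho_0/2)$ with $z_t:=(1-t)z+tx_c$ lies in the deep interior. Fix $t\asymp r/\rho_0$. Letting $e$ range over a cone $\mathcal C$ of half-angle $\asymp\rho_0$ about the inward direction $n:=(x_c-z)/\|x_c-z\|$, each point $z+te'$ with $e'$ close to $e$ lies on a segment joining $z$ to that deep-interior ball; running monotonicity of $\nabla\psi_\varepsilon$ along such a segment (comparing $\nabla\psi_\varepsilon(z^{\ast})$ to its value at a graph point inside the deep-interior ball, where it is $O(r)$-close to $\nabla\varphi$) and using $L$-Lipschitzness of $\nabla\varphi$ yields $\langle\nabla\psi_\varepsilon(z^{\ast})-\nabla\varphi(z^{\ast}),e\rangle\lesssim Lt+r$ for all $e\in\mathcal C$. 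Writing $p:=\nabla\psi_\varepsilon(z^{\ast})-\nabla\varphi(z^{\ast})$, this gives $\langle p,n\rangle\lesssim Lt+r$ and, \emph{provided} one also knows $\langle p,n\rangle\ge-O(Lt+r)$, it controls the component of $p$ transverse to $n$ and hence $\|p\|$.

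That missing lower bound on $\langle p,n\rangle$ is, I expect, the genuine obstacle. Monotonicity of $\nabla\psi_\varepsilon$ propagates the bound only in the inward direction---there are no points of $\spt\mu$ on the outward side of the boundary point $z$---and the crude estimates $\|\nabla\psi_\varepsilon\|\le1$, $\|\nabla\varphi\|\le1$ give only the useless $\langle p,n\rangle\ge-2$. To recover smallness I would run the cone estimate simultaneously over a range of scales $t$ and combine it with the approximate subgradient relation $\psi_\varepsilon(z')\ge\psi_\varepsilon(z)+\langle y,z'-z\rangle-12\delta(\varepsilon)$ (a consequence of \eqref{eqn: ub by psi}) and with the fact that $\psi_\varepsilon$, being a Moreau envelope at parameter $\lambda\asymp\delta(\varepsilon)$, is $1/\lambda$-smooth; optimizing the free scale turns the linear-in-$t$ cone bound and the trivial $O(1)$ bound into an estimate for $\|p\|$ of the order of their geometric mean. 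This square-root loss relative to the deep-interior estimate $r$ is exactly what produces the exponent $\tfrac14$, giving $\|p\|\lesssim(L+1)^{3/2}\sqrt[4]{\delta(\delta(\varepsilon)/(L+1))}$ (the $\max$ with the $\tfrac12$-power covering large $\delta$). Re-adding the $O\bigl((L+1)\sqrt{\delta(\varepsilon)}\bigr)$ errors from the Minty step and from passing between $z^{\ast}$ and $z$, which are lower order since $r\gtrsim(L+1)\sqrt{\delta(\varepsilon)}$, then yields the stated inequality, with the constant absorbing the star-shape modulus $\rho_0$.

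The step I expect to be the main obstacle is therefore the control of $\nabla\psi_\varepsilon$ in the inward normal direction at boundary points: neither monotonicity of $\nabla\psi_\varepsilon$ nor its crude uniform bound suffices, and one must couple the approximate conjugacy of Lemma~\ref{lem: approx conj} with a scale optimization against the (weak) $1/\delta(\varepsilon)$-smoothness of $\psi_\varepsilon$---this is precisely the mechanism that costs a square root and yields the fourth-root rate. A secondary, purely bookkeeping, difficulty is carrying the dependence on $\rho_0$ and on $L$ through all of the above so that the final constant has the stated shape.
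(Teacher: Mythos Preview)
Your approach diverges from the paper's and, as you yourself flag, contains a genuine gap at the boundary step. The obstacle you identify --- controlling the inward-normal component $\langle p,n\rangle$ of $p=\nabla\psi_\varepsilon(z^\ast)-\nabla\varphi(z^\ast)$ from below --- is real, and the remedy you sketch (coupling the approximate subgradient inequality from \eqref{eqn: ub by psi} with the $1/\lambda$-smoothness of the Moreau envelope and ``optimizing a free scale'') is not a proof. The Moreau parameter satisfies $\lambda\asymp\delta(\varepsilon)$, so the smoothness constant $1/\lambda$ blows up as $\varepsilon\to0$; it is not clear how this can be traded against the trivial bound $|\langle p,n\rangle|\le 2$ to produce anything better than $O(1)$ in the outward direction. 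Your cone/monotonicity argument gives information only about inward directions, and without a matching lower bound in the normal direction you cannot reconstruct $\|p\|$. In short, the mechanism you point to for losing a square root is not established.

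The paper avoids this obstacle entirely by never attempting a pointwise comparison of $\nabla\psi_\varepsilon$ and $\nabla\varphi$. Instead it compares \emph{function values}: from the definition of $\alpha(\varepsilon)$ one has the approximate subgradient inequality
\[
\psi_\varepsilon(\overline{x})\ge\psi_\varepsilon(\underline{x})+\langle\nabla\varphi(\underline{x}),\overline{x}-\underline{x}\rangle-\alpha(\varepsilon)
\qquad\forall\,\overline{x},\underline{x}\in\spt\mu,
\]
which says $\nabla\varphi$ is an $\alpha(\varepsilon)$-approximate subgradient of $\psi_\varepsilon$ everywhere on $\spt\mu$. Star-shapedness is used only to guarantee that any two points of $\spt\mu$ can be joined by a polygonal path in $\spt\mu$ with $K\le\lceil 4/r\rceil$ steps of length $\le r$. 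Summing the displayed inequality along such a path and replacing each $\langle\nabla\varphi(x_k),x_{k+1}-x_k\rangle$ by $\varphi(x_{k+1})-\varphi(x_k)-Lr^2$ (first-order Taylor with $L$-Lipschitz gradient) yields
\[
\psi_\varepsilon(x)-\psi_\varepsilon(x')\ge\varphi(x)-\varphi(x')-LKr^2-K\alpha(\varepsilon).
\]
Taking a supremum in $x'$ converts this into a bound on $\varphi(x)+\varphi^*(y)-\langle x,y\rangle$ in terms of $\psi_\varepsilon(x)+\psi_\varepsilon^*(y)-\langle x,y\rangle$ plus the path error; the latter is $\le 12\delta(\varepsilon)$ on $\spt\pi_\varepsilon$ by \eqref{eqn: ub by psi}. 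Finally, since $\nabla\varphi$ is $L$-Lipschitz, $\varphi^*$ is $1/L$-strongly convex, which gives the quadratic lower bound $\varphi(x)+\varphi^*(y)-\langle x,y\rangle\ge\tfrac{1}{2L}\|y-\nabla\varphi(x)\|^2$. Choosing $r=\sqrt{\alpha(\varepsilon)}$ makes $K\alpha(\varepsilon)$ and $Kr^2$ both of order $\sqrt{\alpha(\varepsilon)}$, and this is precisely where the fourth root appears --- not from any gradient interpolation, but from balancing the number of steps against the per-step error in a discrete path integral. This argument treats interior and boundary points uniformly and makes no use of Theorem~\ref{thm: general ub}.
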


Recall that a set $A\subseteq \R^d \times \R^d$ is star-shaped, if there exists a point $(x_0,y_0)\in A$ such that for any $(x,y)$ the inclusion
\begin{align*}
\{t(x_0,y_0)  + (1-t) (x,y): t\in [0,1]\}\subseteq A
\end{align*}
holds.
The proof of Theorem \ref{thm: bdry ub} only depends on the shape of $\spt\mu$ through the constant $C>0. $ In fact we obtain the following corollary:

\begin{cor}\label{Cor: bdr ub}
Assume $\nabla \varphi$ is $L$-Lipschitz. Then for any $(x, y) \in \spt \pi_\varepsilon$ we have
\[
\| y - \nabla \varphi(x) \| \le C\sqrt{\ell(\mu)} (L+1)^{3/2} \max\left( 
\sqrt[4]{\delta\Big(\frac{\delta(\varepsilon)}{L+1}\Big)}, \sqrt{\delta\Big(\frac{\delta(\varepsilon)}{L+1}\Big)} \right),
\]    
where $$\ell(\mu):= \sup\{ \text{length of shortest path in $\spt \mu$ between }x, x' : x,x'\in \spt \mu\}. $$
\end{cor}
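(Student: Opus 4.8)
The plan is to revisit the proof of Theorem~\ref{thm: bdry ub} and make its dependence on the geometry of $\spt\mu$ quantitative, replacing straight segments by paths inside $\spt\mu$. Recall that Theorem~\ref{thm: general ub} already controls $\|y-\nabla\varphi(x)\|$ for every $(x,y)\in\spt\pi_\varepsilon$ whose first coordinate lies well inside $\spt\mu$, and that the only role of star-shapedness in Theorem~\ref{thm: bdry ub} is to transport such an interior estimate to a point $x$ near $\partial\spt\mu$ by moving $x$ toward the star center along a segment contained in $\spt\mu$ — along which the duality gap $\psi_\varepsilon(\cdot)+\psi_\varepsilon^*(\cdot)-\langle\cdot,\cdot\rangle$ (bounded by $12\delta(\varepsilon)$ via Lemma~\ref{lem: approx conj}), the monotonicity of $\nabla\psi_\varepsilon$ coming from Minty's trick, and the $L$-Lipschitz continuity of $\nabla\varphi$ are exploited purely locally. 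Since $\interior\spt\mu$ is connected by our standing assumption on $\mu$, it is polygonally path-connected, so any two points of $\interior\spt\mu$ — and, by approximation together with continuity of $f_\varepsilon,g_\varepsilon,\nabla\psi_\varepsilon$ and of $\nabla\varphi$ on $\interior\spt\mu$, any two points of $\spt\mu$ — can be joined by a rectifiable path in $\spt\mu$ of length arbitrarily close to, hence at most, $\ell(\mu)$. The first step is thus to fix $(x,y)\in\spt\pi_\varepsilon$, pick a reference point $x_0$ to which the interior bound of Theorem~\ref{thm: general ub} applies, and choose a near-shortest path $\gamma$ in $\spt\mu$ from $x_0$ to $x$ with $\mathrm{length}(\gamma)\le\ell(\mu)$.

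The second step is to run the near-boundary argument of Theorem~\ref{thm: bdry ub} along $\gamma$ instead of along a segment, tracking the length dependence. Partitioning $\gamma$ into pieces on which the estimates of Theorem~\ref{thm: bdry ub} apply, the controlled quantity — the squared deviation $\|y-\nabla\varphi(x)\|^2$, or equivalently a bound on $\psi_\varepsilon(x)+\psi_\varepsilon^*(\nabla\varphi(x))-\langle x,\nabla\varphi(x)\rangle$ propagated via monotonicity of $\nabla\psi_\varepsilon$ and $L$-Lipschitzness of $\nabla\varphi$ — accumulates at most additively in arc length. The key technical point, which is where the square root in $\sqrt{\ell(\mu)}$ originates, is that this accumulation is genuinely \emph{linear} in $\mathrm{length}(\gamma)$: a telescoping/Cauchy--Schwarz estimate along $\gamma$ should yield $\|y-\nabla\varphi(x)\|^2\lesssim \mathrm{length}(\gamma)\cdot(L+1)^{3}\,\delta\big(\delta(\varepsilon)/(L+1)\big)^{1/2}$ in the regime where the fourth-root term dominates (and the matching estimate with $\delta\big(\delta(\varepsilon)/(L+1)\big)$ otherwise), so that taking (iterated) square roots produces exactly the factor $\sqrt{\ell(\mu)}\,(L+1)^{3/2}\max\big(\delta(\cdot)^{1/4},\delta(\cdot)^{1/2}\big)$. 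Since any star-shaped $\spt\mu\subseteq B(0,1)$ has $\ell(\mu)\le 2\Diam(\spt\mu)\le 4$, the factor $\sqrt{\ell(\mu)}$ is then absorbed into the universal constant, and Theorem~\ref{thm: bdry ub} is recovered as a special case.

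The main obstacles are: (i) checking that the per-piece estimate in the proof of Theorem~\ref{thm: bdry ub} depends on the connecting curve only through its arc length — i.e.\ that convexity and monotonicity of $\nabla\psi_\varepsilon$, the density bound of Lemma~\ref{lem: density upper bound}, and the $L$-Lipschitz bound are all invoked purely locally, so that nothing is lost by bending the segment into a path; (ii) the topological/measure-theoretic bookkeeping ensuring near-shortest paths can be taken inside $\spt\mu$ with length at most $\ell(\mu)$, and that endpoints on $\partial\spt\mu$, where the dual equations \eqref{eq:dual1}--\eqref{eq:dual2} need not hold pointwise, are handled by approximation with interior points using continuity of all quantities involved; and (iii) verifying the crucial linear-in-length accumulation of the squared error, since quadratic accumulation would only give $\ell(\mu)$ rather than $\sqrt{\ell(\mu)}$. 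Of these, (iii) is the heart of the matter; (i) and (ii) are routine once the proof of Theorem~\ref{thm: bdry ub} is written in a form that exposes its locality.
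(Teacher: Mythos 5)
Your high-level plan is correct and is essentially the paper's: the only place star-shapedness enters the proof of Theorem~\ref{thm: bdry ub} is to guarantee a chain $x_0 = x', x_1, \dots, x_K = x$ in $\spt\mu$ with $\|x_{k+1}-x_k\|\le r$ and $K\le\lceil 4/r\rceil$, and in the general case one simply replaces this by $K\le\lceil\ell(\mu)/r\rceil$; the telescoped error $LKr^2 + K\alpha(\varepsilon)$ is linear in $K$, so after setting $r=\sqrt{\alpha(\varepsilon)}$ it scales like $\ell(\mu)\sqrt{\alpha(\varepsilon)}$, and the final square root coming from the $1/L$-strong convexity of $\varphi^*$ produces the $\sqrt{\ell(\mu)}$ factor. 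You correctly identified this linear-in-arc-length accumulation as the crux and correctly noted that quadratic accumulation would only give $\ell(\mu)$.

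However, your reading of the mechanism of Theorem~\ref{thm: bdry ub} is off in a few respects that would matter if you tried to execute the plan. First, Theorem~\ref{thm: bdry ub} is not a propagation of the interior estimate from Theorem~\ref{thm: general ub} toward the boundary; it is an independent argument that bounds the Kantorovich duality gap $\varphi(x)+\varphi^*(y)-\langle x,y\rangle$ for every $(x,y)\in\spt\pi_\varepsilon$, with no case split on $d(x,\partial\spt\mu)$. Second, the per-step inequality \eqref{eqn: approx grad} is an application of Fenchel--Young for $\psi_\varepsilon$ together with the definition of $\alpha(\varepsilon)$; Minty's monotonicity of $\nabla\psi_\varepsilon$ is used in Lemma~\ref{lem: discrepancy ub to support ub} (i.e.\ in the proof of Theorem~\ref{thm: general ub}), not here. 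Third, what telescopes along the chain is the increment $\psi_\varepsilon(x_{k+1})-\psi_\varepsilon(x_k)$ against $\varphi(x_{k+1})-\varphi(x_k)$, not the squared deviation $\|y-\nabla\varphi(x)\|^2$ nor the quantity $\psi_\varepsilon(x)+\psi_\varepsilon^*(\nabla\varphi(x))-\langle x,\nabla\varphi(x)\rangle$; the passage to $\|y-\nabla\varphi(x)\|^2$ happens only once, at the end, via strong convexity of $\varphi^*$. These imprecisions do not change the conclusion, but they mean the proposal as written is a plan guided by a slightly wrong model of the existing proof rather than a verified adaptation of it.
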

By assuming $\spt\mu$ is star-shaped, we essentially enforce $\ell(\mu) \le 4$.

The bound in Theorem~\ref{thm: bdry ub} is quadratically worse than the one in Theorem~\ref{thm: general ub} (at least if one ignores the dependence on $L$). However, one can always combine Theorem~\ref{thm: general ub} and Theorem~\ref{thm: bdry ub}. In that sense, Theorem~\ref{thm: bdry ub} should be viewed as an improvement of Theorem~\ref{thm: general ub} near the boundary of $\spt\mu$. In fact, the following more general result follows from our derivations: if there exists a uniform bound  $\psi_{\varepsilon}(x) + \psi_{\varepsilon}^*(\nabla \varphi(x)) - \langle x, \nabla\varphi(x) \rangle \le \alpha(\varepsilon)$ or, in terms of the dual potentials, $f_\epsilon(x) + g_\epsilon(\nabla\varphi(x)) - c(x, \nabla\varphi(x)) \ge -\alpha(\epsilon)$, for some function $\alpha: \bbR_+ \to \bbR_+$, then following the  proof of Theorem~\ref{thm: general ub} produces an upper bound of order $\sqrt{\alpha(\varepsilon) + \delta(\epsilon)}$, while following the proof of Theorem~\ref{thm: bdry ub} produces an upper bound of order $\sqrt[4]{\alpha(\varepsilon) + \delta(\epsilon)}$.

\subsection{Proof of the concentration bound}
The proof of Theorem~\ref{thm: concentration} combines Lemma~\ref{lem: approx conj} and the following quadratic detachment lemma derived from Minty's trick \cite{minty1962monotone}.

\begin{lemma}
\label{lem: minty}
Let $\phi: \bbR^d \to \bbR$ be a continuously differentiable convex function. Then the following hold:
\begin{enumerate}[label=(\roman*)]
    \item For any $u \in \bbR^d$ there exists $x \in \bbR^d$ such that $x + \nabla\phi(x) = u$.
    \item There exists a Lipschitz function $F_{\phi}: \bbR^d \to \bbR^d$ such that 
    \[
    F_{\phi} \big( x + \nabla\phi (x) \big) = x - \nabla\phi(x).
    \]
    As a consequence, if $x$ solves $x + \nabla\phi(x) = u$ for a given $u\in \R^d$, then
    \[
    x = \frac12 (u + F_\phi(u)).
    \]
    \item 
    For any $(x, y) \in \bbR^d \times \bbR^d$ we have
    \[
    \phi(x) + \phi^*(y) - \langle x, y\rangle \ge \frac14 \left\| x - y - F_{\phi}(x + y) \right\|^2.
    \]
    Alternatively, let $x' \in \bbR^d$ be such that $x' + \nabla \phi(x') = x + y$. Then we have
    \begin{align}\label{eq:key}
    \phi(x) + \phi^*(y) - \langle x, y\rangle \ge \|x - x'\|^2 = \| y - \nabla \phi(x') \|^2.
    \end{align}
\end{enumerate}
\end{lemma}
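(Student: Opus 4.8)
The plan is to prove Lemma~\ref{lem: minty} as three consequences of Minty's classical observation that the resolvent of a maximal monotone operator is single-valued, $1$-Lipschitz, and everywhere defined. Concretely, I would apply this to the subdifferential $\partial\phi = \{\nabla\phi\}$, which is maximal monotone since $\phi$ is convex and $C^1$; the relevant rotated chart is the map $x\mapsto x+\nabla\phi(x)$.

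\textbf{Part (i).} The operator $T := \operatorname{id} + \nabla\phi$ is the resolvent-inverse $(\operatorname{id}+\partial\phi)$, and surjectivity onto all of $\bbR^d$ is exactly Minty's theorem: $(\operatorname{id}+\partial\phi)^{-1}$ is defined on all of $\bbR^d$. Alternatively, and self-containedly, one can note that $x\mapsto \phi(x) + \tfrac12\|x - u\|^2$ is strictly convex and coercive, hence attains a unique minimizer, whose first-order condition is precisely $x + \nabla\phi(x) = u$. I would use this variational argument to keep the proof elementary. Strict convexity also gives uniqueness of $x$, so the map $u \mapsto x(u)$ is well-defined.

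\textbf{Part (ii).} Define $F_\phi(u) := x(u) - \nabla\phi(x(u)) = 2x(u) - u$, using $\nabla\phi(x(u)) = u - x(u)$. The identity $F_\phi(x+\nabla\phi(x)) = x - \nabla\phi(x)$ is then immediate from uniqueness in (i), and $x = \tfrac12(u + F_\phi(u))$ is just a rearrangement. For the Lipschitz claim I would show $u\mapsto x(u)$ is $1$-Lipschitz, which is the standard firm-nonexpansiveness of the resolvent: if $x_i = x(u_i)$ then $\nabla\phi(x_1) - \nabla\phi(x_2) = (u_1 - x_1) - (u_2 - x_2)$, and pairing with $x_1 - x_2$ and using monotonicity $\langle \nabla\phi(x_1)-\nabla\phi(x_2), x_1-x_2\rangle \ge 0$ yields $\|x_1-x_2\|^2 \le \langle u_1-u_2, x_1-x_2\rangle \le \|u_1-u_2\|\,\|x_1-x_2\|$, so $\|x(u_1)-x(u_2)\|\le\|u_1-u_2\|$; hence $F_\phi = 2x(\cdot) - \operatorname{id}$ is $3$-Lipschitz (in fact nonexpansive, but any Lipschitz bound suffices here).

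\textbf{Part (iii).} This is the quadratic detachment. Given $(x,y)$, let $x' = x(x+y)$, so $\nabla\phi(x') = (x+y) - x' =: y'$ and $x' + y' = x+y$; note $x - x' = y' - y$ and $y - \nabla\phi(x') = y - y' = x' - x$, which already gives the final equality $\|x-x'\|^2 = \|y-\nabla\phi(x')\|^2$ in \eqref{eq:key} and, since $F_\phi(x+y) = x' - y'$, identifies $\tfrac14\|x - y - F_\phi(x+y)\|^2 = \tfrac14\|(x-x') - (y - y')\|^2 = \tfrac14\|2(x-x')\|^2 = \|x-x'\|^2$, reconciling the two displayed forms. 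For the inequality itself, I would use the Fenchel--Young gap identity: since $y' = \nabla\phi(x')$ we have $\phi(x') + \phi^*(y') = \langle x', y'\rangle$, and the Bregman-type estimate
\[
\phi(x) + \phi^*(y) - \langle x,y\rangle = \big[\phi(x) - \phi(x') - \langle y', x - x'\rangle\big] + \big[\phi^*(y) - \phi^*(y') - \langle x', y - y'\rangle\big] + \langle x - x', y - y'\rangle.
\]
The first bracket is the Bregman divergence $D_\phi(x,x')\ge 0$, the second is $D_{\phi^*}(y,y')\ge 0$, and the cross term is $\langle x-x', y-y'\rangle = \langle x-x', -(x-x')\rangle = -\|x-x'\|^2$; wait — that has the wrong sign, so I would instead bound the two Bregman terms below using that $\phi$ restricted to the segment, together with convexity of $\phi^*$, gives $D_\phi(x,x') + D_{\phi^*}(y,y') \ge \langle \nabla\phi(x) - \nabla\phi(x'), x - x'\rangle \ge 2\|x-x'\|^2$ via monotonicity applied carefully with $y - y' = -(x-x')$ and $\nabla\phi(x) - y' = \nabla\phi(x) - \nabla\phi(x')$. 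Chaining gives the bound $\ge \|x-x'\|^2$.

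\textbf{Main obstacle.} The genuinely delicate step is (iii): getting the constant $\tfrac14$ (equivalently $\|x-x'\|^2$ after the change of variables) rather than a weaker constant. The clean route is to avoid Bregman bookkeeping and instead work directly in the rotated coordinates $p = \tfrac{x+y}{\sqrt2}$, $q = \tfrac{x-y}{\sqrt2}$, in which the monotone graph $\{(x',\nabla\phi(x'))\}$ becomes the graph of a $1$-Lipschitz map $q = G(p)$ (this is exactly Minty's trick as cited), and the duality gap $\phi(x)+\phi^*(y)-\langle x,y\rangle$ — which vanishes precisely on that graph — is then bounded below by a multiple of the squared vertical distance $\|q - G(p)\|^2$ to the graph, using $1$-Lipschitzness of $G$ to convert vertical distance into honest distance. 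I would present whichever of these two computations is shorter, but I expect the coordinate-rotation argument to be the one that most transparently produces the sharp constant and the clean reformulation \eqref{eq:key}.
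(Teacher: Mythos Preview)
Your arguments for (i) and (ii) are fine and essentially equivalent to the paper's; your variational proof of (i) is actually more self-contained than the paper's citation to Minty, and your Lipschitz bound in (ii) (via firm nonexpansiveness of the resolvent) is the same computation the paper does.

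The gap is in (iii), and it is a sign error. Your four-point identity should read
\[
\phi(x)+\phi^*(y)-\langle x,y\rangle
= D_\phi(x,x') + D_{\phi^*}(y,y') \,-\, \langle x-x',\, y-y'\rangle,
\]
not $+\langle x-x',y-y'\rangle$; check it by expanding the Bregman terms and using $\phi(x')+\phi^*(y')=\langle x',y'\rangle$. With the correct sign, and since $y-y'=-(x-x')$ by construction, the cross term is $+\|x-x'\|^2$, and nonnegativity of the two Bregman divergences gives
\[
\phi(x)+\phi^*(y)-\langle x,y\rangle \ge \|x-x'\|^2
\]
immediately. Your attempted rescue (bounding $D_\phi+D_{\phi^*}$ below by $2\|x-x'\|^2$ via monotonicity) is unjustified --- that would require $\phi$ to be $2$-strongly convex --- and the rotated-coordinates sketch is not a proof as written. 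So the genuine issue is not a missing idea but that you computed the identity with the wrong sign, panicked, and reached for fixes that do not work; once the sign is corrected, your Bregman route is clean and even slightly sharper than the paper's argument, which simply uses $\phi^*(y)\ge \langle x',y\rangle-\phi(x')$ and $\phi(x)-\phi(x')\ge\langle\nabla\phi(x'),x-x'\rangle$ to get $\phi(x)+\phi^*(y)-\langle x,y\rangle\ge -\langle x-x',\,y-\nabla\phi(x')\rangle=\|x-x'\|^2$ --- i.e.\ the same inequality after dropping the Bregman terms. Your reconciliation $\tfrac14\|x-y-F_\phi(x+y)\|^2=\|x-x'\|^2$ is correct.
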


\begin{proof}
Claim (i) was proved in \cite[Corollary after Theorem 4]{minty1962monotone}. Claim (ii) was essentially proved within the same reference, but for completeness we produce a short proof of (ii) here taking (i) for granted.\footnote{In \cite{minty1962monotone}, (i) was actually proved as a corollary of a more complicated version of (ii).} For any $u \in \bbR^d$, choose a solution of the equation $x(u) + \nabla \phi(x(u)) = u$ and call it $x(u)$. Such a solution $x(u)$ must exist by virtue of (i), but might not be unique at this point. Set
\[
F_\phi(u) \coloneqq x(u) - \nabla\phi(x(u)).
\]
We now show that $F_\phi$ is indeed a Lipschitz function. In fact, for any $u, v \in \bbR^d$, we have
\begin{align}
\| F_\phi(u) - F_\phi(v) \|^2 
& = \left\| x(u) - \nabla\phi(x(u)) - \big( x(v) - \nabla\phi(x(v)) \big) \right\|^2
\nonumber\\
& = \left\| x(u) + \nabla\phi(x(u)) - \big( x(v) + \nabla\phi(x(v)) \big) + 2[\nabla\phi(x(v)) - \nabla\phi(x(u))] 
 \right\|^2
\nonumber\\
& = \left\| x(u) + \nabla\phi(x(u))  - \big( x(v) + \nabla\phi(x(v)) \big) \right\|^2 - 4 \langle x(u) - x(v), \nabla \phi(x(u) - \nabla \phi(x(v)) \rangle 
\nonumber\\
& \le \left\| x(u) + \nabla\phi(x(u)) - \big( x(v) + \nabla\phi(x(v)) \big) \right\|^2 
\nonumber\\
& = \| u - v \|^2,
\label{eq:lipschitz}
\end{align}
where the third equality follows from expanding the square and cancelling terms, the inequality follows from gradient monotonicity of convex functions, and the last equality follows from the definition of $x(u)$ and $ x(v)$. To establish (ii) it only remains to show that $x(u)$ is uniquely determined by $u$. But this follows again from \eqref{eq:lipschitz}: if 
\begin{align}\label{eq:unique}
 x(u) + \nabla \phi(x(u)) = x'(u) + \nabla \phi(x'(u)) = u,   
\end{align}
then \eqref{eq:lipschitz} (with $x(v)$ replaced by $x'(u)$) implies that 
\[\| x(u) - \nabla \phi(x(u)) - (x'(u) - \nabla \phi(x'(u))) \|^2 \le 0,\] 
thus $x(u) - \nabla \phi(x(u)) = x'(u) - \nabla \phi(x'(u))$. Adding this to \eqref{eq:unique} we obtain $2x(u) = 2x'(u)$, hence $x(u) = x'(u)$ as desired. Lastly we assume that $x$ solves $x+\nabla\phi(x)=u$ and compute 
\begin{align*}
F_\phi(u)=F_\phi(x+\nabla\phi(x))=x-\nabla \phi(x)= 2x-(x+\nabla \phi(x)) =2x- u.
\end{align*}
This yields $x=(u+F_\phi(u))/2,$ as desired.
For (iii) observe that for any $x' \in \bbR^d$ we have
\begin{align}\label{eq:tidy}
\begin{split}
\phi(x) + \phi^*(y) - \langle x, y \rangle 
& \ge \phi(x) + [\langle x', y \rangle - \phi(x') ] - \langle x, y \rangle 
\\
& = \phi(x) - \phi(x') + \langle x' - x, y \rangle 
\\
& \ge \langle \nabla \phi(x'), x - x' \rangle + \langle x' - x, y \rangle 
\\
& = -\langle x - x', y - \nabla\phi(x') \rangle.
\end{split}
\end{align}
Let $x' \in \bbR^d$ be such that $x' + \nabla \phi(x') = x+y$. Applying (ii) we obtain
\[
x' = \frac12 (x+y + F_\phi(x+y)).
\]
Then
\[
\nabla\phi(x') = x+y - x' = \frac12 (x+y - F_\phi(x+y)).
\]
Plugging these expressions into  $x'$ and $\nabla\phi(x')$ we can rewrite the last line in \eqref{eq:tidy} as
\begin{align*}
-\langle x - x', y - \nabla\phi(x') \rangle = \langle \frac{1}{2} (F_\phi(x+y)+y-x), \frac{1}{2}(y-x +F_\phi(x+y)\rangle = \frac{1}{4} \|x-y-F_\phi(x+y)\|^2,    
\end{align*}
as desired. Lastly \eqref{eq:key} follows from using $x'=(x+y+F_\phi(x+y))/2$, which holds by (ii), and the fact that $x'-x=y-\nabla \phi(x')$ by assumption. This concludes the proof.
\end{proof}

We are now ready to prove Theorem~\ref{thm: concentration}.
\begin{proof}[Proof of Theorem~\ref{thm: concentration}]
By Lemma~\ref{lem: approx conj} there exists a convex continuously differentiable function $\psi_\epsilon: \bbR^d \to \bbR$ satisfying
\[
\spt \pi_{\varepsilon} \subseteq \{(x, y)\in \spt\bbP: \psi_{\varepsilon}(x) + \psi_{\varepsilon}^*(y) - \langle x, y \rangle < 12\delta(\varepsilon)\}. 
\]
By Lemma~\ref{lem: minty}.(i)$\&$(iii) applied to $\psi_\epsilon$ we obtain the following: for any $(x, y) \in \spt\pi_\epsilon$ there exists $x' \in \bbR^d$ which solves $x' + \nabla\psi_\epsilon(x') = x + y$ and satisfies
\[
12 \delta(\varepsilon) \ge \psi_\epsilon(x) + \psi_\epsilon^*(y) - \langle x, y \rangle \ge \|x - x'\|^2 = \|y - \nabla\psi_\epsilon(x')\|^2.
\]
As a consequence, the distance between $(x, y)$ and $(x', \nabla\psi_\epsilon(x')) \in \gr \nabla\psi_\epsilon$ is no larger than \[\sqrt{\|x - x'\|^2 + \|y - \nabla\psi_\epsilon(x')\|} \le \sqrt{24 \delta(\epsilon)}.\] 
Since  $(x, y) \in \spt\pi_\epsilon$ were arbitrary, we obtain that
\[
\dist(\spt\pi_\epsilon; \gr\nabla\psi_\epsilon) \le \sqrt{24\delta(\epsilon)}.
\]
This completes the proof.
\end{proof}

\subsection{Proof of the bias bound}

Given the bound on $\dist(\spt\pi_\epsilon; \gr\nabla\psi_\epsilon)$ stated in  Theorem~\ref{thm: concentration}, our plan is to prove Theorem~\ref{thm: general ub} by quantifying how much $\gr\nabla \psi_\epsilon$ deviates from $\gr\nabla \varphi$. To achieve this, we first derive integral bounds of the gap between $ \psi_\epsilon$ and $\varphi$ (Lemma \ref{lem:1}, Corollary \ref{cor: gap}), which  we then turn into a pointwise bound in terms of the $\epsilon$--spread of $\mu$, using the same argument as in the proof of Lemma \ref{lem: density upper bound}  (Lemma \ref{lem: discrepancy ub}). In a last step we use Minty's trick to convert this pointwise point between $ \psi_\epsilon$ and $\varphi$ into a bound on the distance of the graphs of $\nabla \psi_\epsilon$ and $\nabla \varphi$ (Lemma \ref{lem: discrepancy ub to support ub}).

Our first result relates the integrals of the dual optimizers $f_{\varepsilon}, g_{\varepsilon}$ and the Kantorovich potentials $\varphi, \varphi^*$ via the upper bound on the density $\rmd \pi_\varepsilon/ \rmd\mathbb P$ derived in Lemma~\ref{lem: density upper bound}.

\begin{lemma}\label{lem:1}
  Recall $$C(\mu, \nu) = \int c\,\rmd\pi_\star = \int \varphi \, \rmd \mu + \int \varphi^* \rmd \nu.$$ The primal and the dual optimizer for (\QOT{}) satisfy
  \[
  C(\mu, \nu) \le \int c \, \rmd \pi_{\varepsilon} \le \int f_{\varepsilon}\, \rmd\mu + \int g_{\varepsilon}\, \rmd\nu \le C(\mu, \nu) + 5\delta(\varepsilon).
  \]
\end{lemma}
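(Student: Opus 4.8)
The plan is to establish the chain of four inequalities in turn. The leftmost inequality $C(\mu,\nu) \le \int c \, \rmd\pi_\epsilon$ is immediate: $\pi_\epsilon \in \Pi(\mu,\nu)$ is an admissible coupling for the unregularized problem $(\OT)$, and $C(\mu,\nu)$ is the infimum of $\int c\,\rmd\pi$ over all such couplings. The rightmost equality $C(\mu,\nu) = \int \varphi\,\rmd\mu + \int \varphi^*\,\rmd\nu$ is just Kantorovich duality together with the fact that $f_\star + g_\star$ attains the dual optimum and $c(x,\nabla\varphi(x)) = \varphi(x) + \varphi^*(\nabla\varphi(x))$ on $\spt\mu$ (since $\nabla\varphi(x) \in \partial\varphi(x)$ forces equality in Young's inequality), so $\int c\,\rmd\pi_\star = \int \varphi\,\rmd\mu + \int \varphi^*\,\rmd\nu$.

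For the middle inequality $\int c\,\rmd\pi_\epsilon \le \int f_\epsilon\,\rmd\mu + \int g_\epsilon\,\rmd\nu$, I would integrate the pointwise identity coming from \eqref{eq:optimal_density}. On $\spt\pi_\epsilon$ we have $\rmd\pi_\epsilon/\rmd\bbP = (f_\epsilon(x)+g_\epsilon(y)-c(x,y))/\epsilon \ge 0$, so $c(x,y) \le f_\epsilon(x) + g_\epsilon(y)$ holds $\pi_\epsilon$-almost everywhere. Integrating against $\pi_\epsilon$ and using that $\pi_\epsilon$ has marginals $\mu$ and $\nu$ gives $\int c\,\rmd\pi_\epsilon \le \int (f_\epsilon \oplus g_\epsilon)\,\rmd\pi_\epsilon = \int f_\epsilon\,\rmd\mu + \int g_\epsilon\,\rmd\nu$.

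The last inequality $\int f_\epsilon\,\rmd\mu + \int g_\epsilon\,\rmd\nu \le C(\mu,\nu) + 5\delta(\varepsilon)$ is the substantive one and is where Lemma~\ref{lem: density upper bound} enters. The idea is to compare $f_\epsilon, g_\epsilon$ with the Kantorovich potentials $f_\star, g_\star$ (equivalently $\varphi, \varphi^*$). I would use that $(f_\epsilon, g_\epsilon)$ is the optimizer of the dual problem \eqref{eq:dual} while $(f_\star, g_\star)$ is merely admissible for it, so the optimal dual value exceeds $\int f_\star\,\rmd\mu + \int g_\star\,\rmd\nu - \frac{1}{2\epsilon}\int [f_\star \oplus g_\star - c]_+^2\,\rmd\bbP = C(\mu,\nu)$, since $f_\star \oplus g_\star - c \le 0$ everywhere on $\spt\bbP$ makes the penalty term vanish. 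This gives $\int f_\epsilon\,\rmd\mu + \int g_\epsilon\,\rmd\nu - \frac{1}{2\epsilon}\int [f_\epsilon \oplus g_\epsilon - c]_+^2\,\rmd\bbP \ge C(\mu,\nu)$, so it remains to bound the penalty term $\frac{1}{2\epsilon}\int [f_\epsilon \oplus g_\epsilon - c]_+^2\,\rmd\bbP$ from above by $5\delta(\epsilon)$. Here I would use Lemma~\ref{lem: density upper bound}, which says $[f_\epsilon(x)+g_\epsilon(y)-c(x,y)]_+ \le 5\delta(\epsilon)$ pointwise on $\spt\bbP$; combined with $\epsilon\cdot\rmd\pi_\epsilon/\rmd\bbP = [f_\epsilon\oplus g_\epsilon - c]_+$ and $\int \rmd\pi_\epsilon = 1$, we get $\int [f_\epsilon \oplus g_\epsilon - c]_+^2\,\rmd\bbP \le 5\delta(\epsilon) \int [f_\epsilon \oplus g_\epsilon - c]_+\,\rmd\bbP = 5\delta(\epsilon)\cdot\epsilon$, so $\frac{1}{2\epsilon}\int[\cdots]_+^2\,\rmd\bbP \le \frac{5}{2}\delta(\epsilon) \le 5\delta(\epsilon)$. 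I do not expect any serious obstacle here; the only care needed is making sure the admissibility of $(f_\star,g_\star)$ in \eqref{eq:dual} is legitimate (they are bounded continuous on the compact supports, hence in $L^1(\mu), L^1(\nu)$, and satisfy $f_\star \oplus g_\star \le c$ on $\spt\bbP$ by Kantorovich duality), and that the various integration-by-parts-free manipulations with the plus-function are valid, which they are since everything is bounded.
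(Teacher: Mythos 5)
Your handling of the first two inequalities is fine (the middle step, observing $c\le f_\epsilon+g_\epsilon$ holds $\pi_\epsilon$-a.e.\ and integrating against $\pi_\epsilon$, is in fact a slightly more elementary variant of the paper's $L^2$-norm computation). But the proof of the last inequality has a direction error that makes it a genuine gap.

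You compare the dual value at the optimizer $(f_\epsilon,g_\epsilon)$ with the dual value at the admissible pair $(f_\star,g_\star)$. Since the dual problem \eqref{eq:dual} is a supremum, this comparison gives
\[
\int f_\epsilon\,\rmd\mu + \int g_\epsilon\,\rmd\nu - \frac{1}{2\epsilon}\int [f_\epsilon\oplus g_\epsilon - c]_+^2\,\rmd\bbP \ \ge\ C(\mu,\nu),
\]
which rearranges to
\[
\int f_\epsilon\,\rmd\mu + \int g_\epsilon\,\rmd\nu \ \ge\ C(\mu,\nu) + \frac{1}{2\epsilon}\int [f_\epsilon\oplus g_\epsilon - c]_+^2\,\rmd\bbP.
\]
That is a \emph{lower} bound on $\int f_\epsilon\,\rmd\mu + \int g_\epsilon\,\rmd\nu$. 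Bounding the penalty term from above by $5\delta(\epsilon)$ (which you do correctly) tells you nothing about an upper bound: the inequality above never yields $\int f_\epsilon\,\rmd\mu + \int g_\epsilon\,\rmd\nu \le C(\mu,\nu) + 5\delta(\epsilon)$, no matter how small the penalty is. The error is that you would need the reverse inequality (dual optimal $\le C(\mu,\nu) +$ small), but comparing an optimizer to a competitor gives only the inequality you already wrote. The paper's argument is different and does work: it takes the pointwise bound $f_\epsilon(x)+g_\epsilon(y)-c(x,y)\le 5\delta(\epsilon)$ on $\spt\bbP$ from Lemma~\ref{lem: density upper bound} and integrates it against the Monge coupling $\pi_\star$; since $\pi_\star$ has marginals $\mu,\nu$ and $\spt\pi_\star\subseteq\spt\bbP$, this yields directly $\int f_\epsilon\,\rmd\mu + \int g_\epsilon\,\rmd\nu \le \int c\,\rmd\pi_\star + 5\delta(\epsilon) = C(\mu,\nu)+5\delta(\epsilon)$, with no duality argument needed.

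A minor additional point: your justification of the ``recall'' equality invokes $c(x,\nabla\varphi(x)) = \varphi(x)+\varphi^*(\nabla\varphi(x))$, but Young's equality gives $\varphi(x)+\varphi^*(\nabla\varphi(x)) = \langle x,\nabla\varphi(x)\rangle$, which is not $\frac12\|x-\nabla\varphi(x)\|^2$. This is peripheral to the main chain of inequalities and the equality is taken as known background by the paper, but the justification as written is not correct.
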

\begin{proof}
  The inequality $C(\mu,\nu) = \int c\,\rmd\pi_\star \le \int c\,\rmd \pi_{\varepsilon}$ is obvious by optimality of $\pi_\star$. To prove the remaining inequalities,
  we begin with the observation
  \begin{align*}
  0\le \left\| \frac{\rmd \pi_{\varepsilon}}{\rmd \bbP} \right\|_{L^2(\bbP)}^2 
  &= \int \frac{1}{\varepsilon^2}[f_{\varepsilon}(x) + g_{\varepsilon}(y) - c(x,y)]_+^2 \, \bbP(\rmd x, \rmd y)\\
  &= \int \frac{1}{\varepsilon^2}[f_{\varepsilon}(x) + g_{\varepsilon}(y) - c(x,y)][f_{\varepsilon}(x)+ g_{\varepsilon}(y) - c(x,y)]_+\,  \bbP(\rmd x, \rmd y)\\
  &= \frac{1}{\varepsilon} \int [f_{\varepsilon}(x) + g_{\varepsilon}(y) - c(x,y)] \, \pi_{\varepsilon}(\rmd x, \rmd y).
\end{align*}
  As a consequence,
  \[\int f_{\varepsilon} \,\rmd \mu + \int g_{\varepsilon}\, \rmd \nu \ge \int c \, \rmd \pi_{\varepsilon}. \]
  To prove the last inequality, we apply Lemma~\ref{lem: density upper bound} to see that $f_{\varepsilon}(x) + g_{\varepsilon}(y) - c(x,y) \le 5\delta(\varepsilon)$. Integrating with respect to $\pi_\star$ we obtain
  \[\int f_{\varepsilon} \,\rmd\mu + \int g_{\varepsilon} \,\rmd\nu \le \int c\,\rmd\pi_\star + 5\delta(\varepsilon).
  \]
  This completes the proof.
\end{proof}

\begin{cor}
  \label{cor: gap}
  The convex function $\psi_{\varepsilon}$ from Lemma~\ref{lem: approx conj} fulfills
  \[
  \int \big[\psi_{\varepsilon}(x) + \psi_{\varepsilon}^*(\nabla\varphi(x)) - \langle x, \nabla\varphi(x) \rangle \big]\, \mu(\rmd x) \le 12\delta(\varepsilon).
  \]
\end{cor}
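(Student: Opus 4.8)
The plan is to relate the quantity in the corollary to the dual objective value via the Fenchel--Young inequality, and then invoke Lemma~\ref{lem:1} together with the approximate-conjugacy estimates from Lemma~\ref{lem: approx conj}.

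\medskip

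First I would note that for each $x$, the Fenchel--Young inequality applied to the convex function $\psi_\varepsilon$ and its conjugate $\psi_\varepsilon^*$ gives
\[
\psi_{\varepsilon}(x) + \psi_{\varepsilon}^*(\nabla\varphi(x)) - \langle x, \nabla\varphi(x) \rangle \ge 0,
\]
so the integrand is nonnegative; this means we only need an \emph{upper} bound on its integral, and we are free to bound it by integrating a larger (but more tractable) quantity. The natural move is to replace $\psi_\varepsilon$ and $\psi_\varepsilon^*$ by the dual potentials using \eqref{eq:toshow1} and \eqref{eq:toshow2}. Since $x \in \spt\mu$ and $\nabla\varphi(x) \in \spt\nu$ (the latter by Assumption~\ref{ass:nu}), we may substitute $\psi_\varepsilon(x) \le \tfrac12\|x\|^2 - f_\varepsilon(x) + 6\delta(\varepsilon)$ and $\psi_\varepsilon^*(\nabla\varphi(x)) \le \tfrac12\|\nabla\varphi(x)\|^2 - g_\varepsilon(\nabla\varphi(x)) + 6\delta(\varepsilon)$. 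Adding these and using $\tfrac12\|x\|^2 + \tfrac12\|\nabla\varphi(x)\|^2 - \langle x, \nabla\varphi(x)\rangle = c(x,\nabla\varphi(x))$, the integrand is bounded above by
\[
c(x, \nabla\varphi(x)) - f_\varepsilon(x) - g_\varepsilon(\nabla\varphi(x)) + 12\delta(\varepsilon).
\]

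\medskip

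Next I would integrate this against $\mu$. Since $\pi_\star = (\operatorname{id}, \nabla\varphi)_\sharp\mu$, we have $\int c(x,\nabla\varphi(x))\,\mu(\rmd x) = \int c\,\rmd\pi_\star = C(\mu,\nu)$, and likewise $\int f_\varepsilon(x)\,\mu(\rmd x) = \int f_\varepsilon\,\rmd\mu$ and $\int g_\varepsilon(\nabla\varphi(x))\,\mu(\rmd x) = \int g_\varepsilon\,\rmd\nu$ because $(\nabla\varphi)_\sharp\mu = \nu$. Therefore the integral is at most
\[
C(\mu,\nu) - \Big(\int f_\varepsilon\,\rmd\mu + \int g_\varepsilon\,\rmd\nu\Big) + 12\delta(\varepsilon).
\]
By Lemma~\ref{lem:1}, $\int f_\varepsilon\,\rmd\mu + \int g_\varepsilon\,\rmd\nu \ge C(\mu,\nu)$, so the first two terms are $\le 0$, leaving the bound $12\delta(\varepsilon)$, which is exactly the claim. (One could even afford the weaker $\int f_\varepsilon\,\rmd\mu + \int g_\varepsilon\,\rmd\nu \le C(\mu,\nu) + 5\delta(\varepsilon)$ direction if the inequality needed to be two-sided, but here only the lower bound on the dual value is used.)

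\medskip

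I do not anticipate a serious obstacle; the proof is essentially bookkeeping. The one point requiring a little care is making sure the pointwise substitutions are legitimate everywhere they are used: \eqref{eq:toshow1} holds on $\spt\mu$ and \eqref{eq:toshow2} holds on $\spt\nu$, so one must confirm that $\nabla\varphi(x) \in \spt\nu$ for $\mu$-a.e.\ $x$, which is precisely the content of $(\nabla\varphi)_\sharp\mu = \nu$ together with $\spt\nu \subseteq B(0,1)$ from Assumption~\ref{ass:nu}. With that in hand, the change-of-variables identities and Lemma~\ref{lem:1} close the argument.
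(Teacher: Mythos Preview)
Your proposal is correct and follows essentially the same argument as the paper: substitute \eqref{eq:toshow1}--\eqref{eq:toshow2} to replace $\psi_\varepsilon,\psi_\varepsilon^*$ by the dual potentials, rewrite via $\pi_\star=(\operatorname{id},\nabla\varphi)_\sharp\mu$, and conclude with the inequality $\int f_\varepsilon\,\rmd\mu+\int g_\varepsilon\,\rmd\nu\ge C(\mu,\nu)$ from Lemma~\ref{lem:1}. Your additional remarks on nonnegativity of the integrand and on $\nabla\varphi(x)\in\spt\nu$ are correct and simply make explicit points the paper leaves implicit.
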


\begin{proof}
   By Lemma \ref{lem: approx conj} we have
   \begin{align*}
   &\int \big[\psi_{\varepsilon}(x) + \psi_{\varepsilon}^*(\nabla\varphi(x)) - \langle x, \nabla\varphi(x) \rangle \big]\, \mu(\rmd x)\\
   &\le \int \Big[\frac{1}{2} \|x\|^2 -f_\varepsilon(x) + \frac{1}{2} \|\nabla \varphi(x)\|^2 -g_\varepsilon(\nabla \varphi(x)) - \langle x, \nabla\varphi(x) \rangle \Big] \,\mu(\rmd x) +12\delta(\varepsilon).
   \end{align*}
   On the other hand, by Lemma \ref{lem:1} we conclude
   \begin{align*}
    &\int \Big[\frac{1}{2} \|x\|^2 - f_\varepsilon(x) + \frac{1}{2} \|\nabla \varphi(x)\|^2 -g_\varepsilon(\nabla \varphi(x)) - \langle x, \nabla\varphi(x) \rangle \Big]\, \mu(\rmd x) +12\delta(\varepsilon)\\
   &= \int \big[c(x, \nabla \varphi(x))  -f_\varepsilon(x) -g_\varepsilon(\nabla \varphi(x))\big] \, \mu(\rmd x)+12\delta(\varepsilon)\\
   &= \int \big[c(x, y)  -f_\varepsilon(x) -g_\varepsilon(y)\big] \, \pi_\star (\rmd x, \rmd y)+12\delta(\varepsilon)\\
   &\le 12\delta(\varepsilon).
   \end{align*}
\end{proof}

If $\nabla\varphi$ is $L$-Lipschitz, we can convert the integral bound given in Corollary~\ref{cor: gap} into a uniform bound using the same idea as in the proof of Lemma~\ref{lem: density upper bound}.

\begin{lemma} \label{lem: discrepancy ub}
Assume that $\nabla \varphi$ is $L$-Lipschitz. Then we have
\begin{equation}
\label{eqn: discrepancy ub}
\psi_{\varepsilon}(x) + \psi_{\varepsilon}^*(\nabla\varphi(x)) - \langle x, \nabla\varphi(x) \rangle \le C(L+1) \delta\!\left(\frac{\delta(\varepsilon)}{L+1}\right)
\qquad \forall x \in \spt\mu.
\end{equation}
\end{lemma}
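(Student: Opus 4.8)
The plan is to convert the integral bound of Corollary~\ref{cor: gap} into the claimed pointwise bound by the very same ``no tall, thin spike'' argument used in the proof of Lemma~\ref{lem: density upper bound}. Abbreviate the Fenchel--Young gap by $h(x) := \psi_{\varepsilon}(x) + \psi_{\varepsilon}^*(\nabla\varphi(x)) - \langle x, \nabla\varphi(x)\rangle \ge 0$. The obstacle to running the spike argument directly on $h$ is that $h$ need not be Lipschitz in $x$: $\psi_{\varepsilon}^*$ may blow up near $\partial B(0,1)$, and $\nabla\varphi$ can take values there. To sidestep this I would work with the surrogate
\[
\tilde h(x) := \psi_{\varepsilon}(x) + \psi_{\varepsilon}'(\nabla\varphi(x)) - \langle x, \nabla\varphi(x) \rangle,
\]
where $\psi_{\varepsilon}'$ is the truncated conjugate from Corollary~\ref{cor:bound}. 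Being a supremum of affine maps $y \mapsto \langle x', y\rangle - \psi_{\varepsilon}(x')$ with $x' \in \spt\mu \subseteq B(0,1)$, the function $\psi_{\varepsilon}'$ is globally $1$-Lipschitz, which is exactly the regularity we lacked.

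Next I would record four properties of $\tilde h$ on $\spt\mu$. \textbf{(a)} $\tilde h \ge 0$ on $\spt\mu$, since $\psi_{\varepsilon}'(\nabla\varphi(x)) \ge \langle x, \nabla\varphi(x)\rangle - \psi_{\varepsilon}(x)$ for $x \in \spt\mu$. \textbf{(b)} $\psi_{\varepsilon}' \le \psi_{\varepsilon}^*$ pointwise, so $\tilde h \le h$, and Corollary~\ref{cor: gap} gives $\int \tilde h \,\rmd\mu \le 12\delta(\varepsilon)$. \textbf{(c)} Since $\nabla\varphi$ is $L$-Lipschitz, hence continuous, and $\nu = (\nabla\varphi)_\sharp\mu$, one has $\nabla\varphi(\spt\mu) \subseteq \spt\nu$; thus Corollary~\ref{cor:bound} applies and yields $h(x) \le \tilde h(x) + 22\delta(\varepsilon)$ for all $x \in \spt\mu$. \textbf{(d)} $\tilde h$ is $2(L+1)$-Lipschitz on $\spt\mu$: $\psi_{\varepsilon}$ is $1$-Lipschitz, $\psi_{\varepsilon}' \circ \nabla\varphi$ is $L$-Lipschitz, and $x\mapsto\langle x,\nabla\varphi(x)\rangle$ is $(1+L)$-Lipschitz on $B(0,1)$ because $\|\nabla\varphi\|\le 1$ and $\|x\|\le 1$ there.

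With these in hand the spike argument is routine. Fix $x_0 \in \spt\mu$, set $T := \tilde h(x_0)$ and $s := T/(4(L+1))$. By (d) and (a), $\tilde h \ge T/2$ on $B(x_0, s) \cap \spt\mu$, so
\[
12\delta(\varepsilon) \ge \int \tilde h \,\rmd\mu \ge \tfrac{T}{2}\, \mu\big(B(x_0, s)\big) \ge \tfrac{T}{2}\, \rho(s) = 2(L+1)\, s\, \rho(s),
\]
hence $s\,\rho(s) \le 6\delta(\varepsilon)/(L+1)$. Since $r \mapsto r\rho(r)$ is nondecreasing, the definition of $\delta$ forces $s \le \delta\big(6\delta(\varepsilon)/(L+1)\big)$, and Lemma~\ref{lem:delta} then gives $T = 4(L+1)s \le 24(L+1)\,\delta\big(\delta(\varepsilon)/(L+1)\big)$. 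Finally, combining (c) with $T \le 24(L+1)\delta(\delta(\varepsilon)/(L+1))$ and the elementary inequality $\delta(\varepsilon) \le (L+1)\delta\big(\delta(\varepsilon)/(L+1)\big)$ (a consequence of $\delta(u)\ge u$) gives $h(x_0) \le T + 22\delta(\varepsilon) \le C(L+1)\,\delta\big(\delta(\varepsilon)/(L+1)\big)$ with, say, $C = 46$, which is the assertion.

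The step requiring the most care is securing the Lipschitz regularity needed for the spike argument: this is exactly why one cannot argue with $h$ directly and must pass through $\psi_{\varepsilon}'$, and it also forces the bookkeeping check in (c) that $\nabla\varphi$ maps $\spt\mu$ into $\spt\nu$ so that Corollary~\ref{cor:bound} is available. Everything else is an essentially verbatim repetition of the proof of Lemma~\ref{lem: density upper bound}, with the $(L+1)$-factors tracked through the monotonicity and scaling properties of $\delta$ from Lemma~\ref{lem:delta}.
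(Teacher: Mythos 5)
Your proposal is correct and follows essentially the same route as the paper: both pass from $\psi_\epsilon^*$ to the truncated conjugate $\psi_\epsilon'$ to secure $2(L+1)$-Lipschitz regularity of the surrogate gap $\tilde h$, run the ``no tall spike'' argument against the $\delta(\epsilon)$-integral bound from Corollary~\ref{cor: gap}, and then transfer back to $\psi_\epsilon^*$ via Corollary~\ref{cor:bound}. The only (harmless) deviations are cosmetic: you use the pointwise inequality $\psi_\epsilon' \le \psi_\epsilon^*$ to obtain the integral bound on $\tilde h$ where the paper invokes $\sup_{\spt\nu}|\psi_\epsilon^*-\psi_\epsilon'|\le 22\delta(\epsilon)$, and you run the spike argument at an arbitrary $x_0$ rather than at a maximizer of $\tilde h$, both of which slightly streamline the bookkeeping without changing the argument.
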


\begin{proof}
Recall the function $\psi_{\varepsilon}': \bbR^d \to \bbR$ defined in \eqref{eq:surrogate} in Corollary~\ref{cor:bound} as
\[
\psi_{\varepsilon}'(y) = \sup_{x \in \spt \mu} \left[ \langle x, y \rangle - \psi_{\varepsilon}(x) \right].
\]
By Corollary~\ref{cor:bound} and Corollary~\ref{cor: gap} we obtain
\begin{align}\label{eq:track}
\begin{split}
&\int \left[ \psi_{\varepsilon}(x) + \psi_{\varepsilon}'(\nabla\varphi(x)) - \langle x, \nabla\varphi(x) \rangle \right]\, \mu(\rmd x) \\
&\le \int \big[\psi_{\varepsilon}(x) + \psi_{\varepsilon}^*(\nabla\varphi(x)) - \langle x, \nabla\varphi(x) \rangle \big]\, \mu(\rmd x) + \sup_{y\in \spt\nu} | \psi_{\varepsilon}^*(y) -\psi_\epsilon'(y) |\
\\
&\lesssim \delta(\varepsilon).
\end{split}
\end{align}
The integrand $ x\mapsto \psi_{\varepsilon}(x) + \psi_{\varepsilon}'(\nabla\varphi(x)) - \langle x, \nabla\varphi(x) \rangle $ is nonnegative on $\spt \mu$ by construction of $\psi_{\varepsilon}'$. We claim that it is also $2(L+1)$-Lipschitz. To see this, note that $\psi_{\varepsilon}$ is $1$-Lipschitz by Lemma~\ref{lem: approx conj}, and by definition of $\psi_{\varepsilon}'$, we have $\partial \psi_{\varepsilon}' \subseteq \spt \mu \subseteq B(0,1)$, hence $\psi_\epsilon'$ is $1$-Lipschitz. The claimed Lipschitz continuity follows from these inequalities, the chain rule, and the assumption that $\nabla\varphi$ is $L$-Lipschitz. 

Now we follow the same idea as in the proof of Lemma~\ref{lem: density upper bound} and Theorem~\ref{thm: sym lb}.
For this we define 
\[
M := \sup_{x \in \spt\mu} \left[ \psi_{\varepsilon}(x) + \psi_{\varepsilon}'(\nabla\varphi(x)) - \langle x, \nabla\varphi(x) \rangle \right].
\]
By the $2(L+1)$-Lipschitz continuity of $ x\mapsto \psi_{\varepsilon}(x) + \psi_{\varepsilon}'(\nabla\varphi(x)) - \langle x, \nabla\varphi(x) \rangle $ there exists $x_0 \in \spt\mu$ such that
\[
\psi_{\varepsilon}(x) + \psi_{\varepsilon}'(\nabla\varphi(x)) - \langle x, \nabla\varphi(x) \rangle \ge \frac{M}{2} \qquad \forall x \in B\left( x_0, \frac{M}{4(L+1)} \right). 
\]
Together with \eqref{eq:track} this yields

\begin{align*}
C\delta(\varepsilon) \ge \int \left( \psi_{\varepsilon}(x) + \psi_{\varepsilon}'(\nabla\varphi(x)) - \langle x, \nabla\varphi(x) \rangle \right) \mu(\rmd x) &\ge \frac{M}{2} \mu\left( B \Big( x_0, \frac{M}{4(L+1)} \Big) \right) \\
&\ge \frac{M}{2} \rho\left( \frac{M}{4(L+1)}  \right)
\end{align*}
recalling the definition of $\rho$ stated in Lemma~\ref{lem: density upper bound}. The above can be rewritten more compactly as
\[
R \rho(R) \le \frac{C\delta(\varepsilon)}{L+1}, \qquad \text{where }R \coloneqq \frac{M}{4(L+1)}.
\]
Thus by definition of the $\epsilon$--spread $\delta(\epsilon)$ and Lemma~\ref{lem:delta} we find
\[
R \le  \delta\!\left(\frac{C\delta(\varepsilon)}{L+1}\right)\le C \delta\!\left(\frac{\delta(\varepsilon)}{L+1}\right).
\]
Plugging in the definition of $R$ then yields
\begin{equation}
\label{eqn: discrepancy ub}
M \le C(L+1) \delta\!\left( \frac{\delta(\varepsilon)}{L+1} \right).
\end{equation}
Lastly applying \eqref{eqn: psi dual restricted} once again, we obtain
\begin{align*}
\psi_{\varepsilon}(x) + \psi_{\varepsilon}^*(\nabla\varphi(x)) - \langle x, \nabla\varphi(x) \rangle &\le M + \sup_{y\in \spt\nu} | \psi_{\varepsilon}^*(y) -\psi_\epsilon'(y) |\\
&\le 
 C(L+1) \delta\!\left(\frac{\delta(\varepsilon)}{L+1}\right) + C\delta(\varepsilon) \le C(L+1) \delta\!\left(\frac{\delta(\varepsilon)}{L+1}\right)
\end{align*}
for all $x\in \spt\mu,$
where the last inequality follows from 
Lemma \ref{lem:delta} and the computation
\begin{align}\label{eq:epsilon_easy}
\delta(\epsilon)=\delta \Big( \frac{(L+1)\epsilon}{L+1}\Big) \le (L+1) \delta \Big( \frac{\epsilon}{L+1}\Big) \le (L+1) \delta \Big( \frac{\delta(\epsilon)}{L+1}\Big).
\end{align}
This concludes the proof.
\end{proof}

Next, we show how to derive an upper bound on the distance between $\spt \pi_{\varepsilon}$ and $\gr\nabla\varphi$ from a uniform upper bound on $\psi_{\varepsilon}(x) + \psi_{\varepsilon}^*(\nabla\varphi(x)) - \langle x, \nabla\varphi(x) \rangle$.

\begin{lemma}
\label{lem: discrepancy ub to support ub}
Define 
\begin{align}\label{eq:alpha}
\alpha(\varepsilon) \coloneqq \sup_{x \in \spt\mu} \big[ \psi_{\varepsilon}(x) + \psi_{\varepsilon}^*(\nabla\varphi(x)) - \langle x, \nabla\varphi(x) \rangle \big].   
\end{align}
Then there exists a universal constant $C > 0$ such that 
\[
\| y - \nabla \varphi(x) \| \le C L \sqrt{\alpha(\varepsilon) + \delta(\varepsilon)} \quad \text{ or }\quad  d(x, \partial \spt \mu) \le C \sqrt{\alpha(\varepsilon) + \delta(\varepsilon)}
\]
for all $(x, y) \in \spt \pi_{\varepsilon}$.
\end{lemma}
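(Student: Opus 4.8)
The plan is to combine the pointwise bound on the duality gap $\psi_\epsilon(x)+\psi_\epsilon^*(\nabla\varphi(x))-\langle x,\nabla\varphi(x)\rangle\le\alpha(\epsilon)$ with Minty's trick (Lemma~\ref{lem: minty}) to compare the graphs $\gr\nabla\psi_\epsilon$ and $\gr\nabla\varphi$, and then to transfer this comparison back to $\spt\pi_\epsilon$ via the concentration bound of Theorem~\ref{thm: concentration}. Concretely, fix $(x,y)\in\spt\pi_\epsilon$. By Lemma~\ref{lem: approx conj}, $\psi_\epsilon(x)+\psi_\epsilon^*(y)-\langle x,y\rangle<12\delta(\epsilon)$, and by Lemma~\ref{lem: minty}.(i)$\&$(iii) applied to $\phi=\psi_\epsilon$ there is an $x'$ with $x'+\nabla\psi_\epsilon(x')=x+y$ and $\|x-x'\|^2=\|y-\nabla\psi_\epsilon(x')\|^2\le 12\delta(\epsilon)$; so $(x,y)$ is within $\sqrt{24\delta(\epsilon)}$ of the point $(x',\nabla\psi_\epsilon(x'))\in\gr\nabla\psi_\epsilon$. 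It therefore suffices to bound the distance from $(x',\nabla\psi_\epsilon(x'))$ to $\gr\nabla\varphi$ in terms of $\sqrt{\alpha(\epsilon)+\delta(\epsilon)}$, at least when $x'$ is not too close to $\partial\spt\mu$.

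First I would record that $x$, and hence $x'$ (which is a convex-combination-type average of $x$ and $y$, and in any case lies near $x$ since $\|x-x'\|\le\sqrt{12\delta(\epsilon)}$), is interior to $\spt\mu$ up to an error $\lesssim\sqrt{\delta(\epsilon)}$; if $d(x,\partial\spt\mu)\le C\sqrt{\alpha(\epsilon)+\delta(\epsilon)}$ we are in the second alternative and done, so assume not. Next, the heart of the argument: I want to run the Minty estimate in the \emph{other direction}, now using $\varphi$. For $x'\in\spt\mu$ (or its small interior neighbourhood), $\nabla\varphi(x')$ is defined, and the hypothesis gives $\psi_\epsilon(x')+\psi_\epsilon^*(\nabla\varphi(x'))-\langle x',\nabla\varphi(x')\rangle\le\alpha(\epsilon)$ — but one has to be slightly careful because $\alpha(\epsilon)$ bounds the gap at points of $\spt\mu$, and I may need to perturb. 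Using Lemma~\ref{lem: minty}.(iii) in the form $\phi(x)+\phi^*(y)-\langle x,y\rangle\ge-\langle x-x'',y-\nabla\phi(x'')\rangle$ with a judicious choice of $x''$ solving $x''+\nabla\psi_\epsilon(x'')=x'+\nabla\varphi(x')$, I would deduce $\|\nabla\psi_\epsilon(x')-\nabla\varphi(x')\|$ — or rather $\|x'-x''\|=\|\nabla\varphi(x')-\nabla\psi_\epsilon(x'')\|$ — is $\lesssim\sqrt{\alpha(\epsilon)}$. The $L$-Lipschitz hypothesis on $\nabla\varphi$ then lets me absorb the discrepancy between $x'$ and $x''$: since $\|x'-x''\|\lesssim\sqrt{\alpha(\epsilon)}$ and $\nabla\varphi$ is $L$-Lipschitz, $\|\nabla\varphi(x')-\nabla\varphi(x'')\|\le L\|x'-x''\|$, and chaining the triangle inequalities yields $\|\nabla\psi_\epsilon(x')-\nabla\varphi(x')\|\lesssim L\sqrt{\alpha(\epsilon)}$. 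Putting this together with the $\sqrt{24\delta(\epsilon)}$ bound from the first paragraph gives $\|y-\nabla\varphi(x)\|\le\|y-\nabla\psi_\epsilon(x')\|+\|\nabla\psi_\epsilon(x')-\nabla\varphi(x')\|+\|\nabla\varphi(x')-\nabla\varphi(x)\|\lesssim\sqrt{\delta(\epsilon)}+L\sqrt{\alpha(\epsilon)}+L\sqrt{\delta(\epsilon)}\lesssim L\sqrt{\alpha(\epsilon)+\delta(\epsilon)}$, which is the first alternative.

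The main obstacle I anticipate is the bookkeeping around the boundary of $\spt\mu$: the bound $\alpha(\epsilon)$ on the duality gap is only available at points of $\spt\mu$, while the auxiliary points $x',x''$ produced by Minty's trick are a priori only in $\R^d$, displaced from $x$ by $O(\sqrt{\delta(\epsilon)})$ and from each other by $O(\sqrt{\alpha(\epsilon)})$. One must check that these displacements keep the relevant points inside $\spt\mu$ (hence the alternative "$d(x,\partial\spt\mu)\le C\sqrt{\alpha(\epsilon)+\delta(\epsilon)}$"), and that $\nabla\varphi$ — which is only defined $\mu$-a.e., or on $\interior\spt\mu$ — can legitimately be evaluated at $x'$; this is where the Lipschitz assumption on $\nabla\varphi$ (equivalently, regularity of $\varphi$) does double duty, both quantitatively and to justify the pointwise evaluations by continuity. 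A secondary technical point is tracking the interplay between the Moreau-envelope error $\delta(\epsilon)$ built into $\psi_\epsilon$ (Lemma~\ref{lem: approx conj}) and the conjugate $\psi_\epsilon^*$, so that all the "$12\delta(\epsilon)$" and "$22\delta(\epsilon)$" type constants from Lemma~\ref{lem: approx conj} and Corollary~\ref{cor:bound} are folded cleanly into a single $\sqrt{\alpha(\epsilon)+\delta(\epsilon)}$; I expect this to be routine but requires care to state the constant $C$ as universal.
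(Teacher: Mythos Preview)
Your overall architecture matches the paper's: a first Minty step shows $(x,y)\in\spt\pi_\epsilon$ is within $\sqrt{12\delta(\epsilon)}$ of a point $(x',\nabla\psi_\epsilon(x'))\in\gr\nabla\psi_\epsilon$, and a second Minty step (applied to the pair $(x',\nabla\varphi(x'))$, using the $\alpha(\epsilon)$ bound) produces $x''$ with $\|x'-x''\|=\|\nabla\varphi(x')-\nabla\psi_\epsilon(x'')\|\le\sqrt{\alpha(\epsilon)}$. The boundary bookkeeping you flag is also handled exactly as you suggest.

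The gap is in your ``chaining the triangle inequalities'' step. You claim that from $\|x'-x''\|\lesssim\sqrt{\alpha(\epsilon)}$ and $L$-Lipschitzness of $\nabla\varphi$ you can deduce $\|\nabla\psi_\epsilon(x')-\nabla\varphi(x')\|\lesssim L\sqrt{\alpha(\epsilon)}$. But the available estimates only control $\nabla\psi_\epsilon(x'')$ (close to $\nabla\varphi(x')$), not $\nabla\psi_\epsilon(x')$. To pass from one to the other you would need a Lipschitz bound on $\nabla\psi_\epsilon$, and none is available with a useful constant: $\psi_\epsilon$ is a Moreau envelope with parameter $\lambda\asymp\delta(\epsilon)$, so $\nabla\psi_\epsilon$ is only $O(1/\delta(\epsilon))$-Lipschitz, which destroys the rate. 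In short, you have two points $x',x''$ on $\gr\nabla\psi_\epsilon$ with close base coordinates but no control on the gap between $\nabla\psi_\epsilon(x')$ and $\nabla\psi_\epsilon(x'')$; the triangle chain breaks exactly there.

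The paper closes this gap by replacing the direct chain with a \emph{monotonicity contradiction}. Assuming $\|y-\nabla\varphi(x)\|>r$ with $r\asymp(L+1)\sqrt{\alpha(\epsilon)+\delta(\epsilon)}$, it moves from $x$ a small step in the direction $y-\nabla\varphi(x)$ to a point $\vec{x}\in\spt\mu$, applies the second Minty step at $\vec{x}$ to obtain $\vec{x}'$, and then checks that the pair $(x',\vec{x}')$ violates $\langle\vec{x}'-x',\nabla\psi_\epsilon(\vec{x}')-\nabla\psi_\epsilon(x')\rangle\ge0$. This uses only convexity of $\psi_\epsilon$ (monotonicity of $\nabla\psi_\epsilon$), not any Lipschitz bound, and is the key idea your proposal is missing.
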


\begin{proof}
We divide the proof into two steps.

\noindent {\em Step I: bounding $\dist(\gr \nabla \varphi,\gr \nabla\psi_{\varepsilon})$.}  
Fix $x\in \R^d.$
By Lemma~\ref{lem: minty}.(i) applied to $\psi_{\varepsilon}$, there exists $x' \in \bbR^d$ such that
\begin{equation}
\label{eqn: proj psi}
x' + \nabla\psi_{\varepsilon}(x') = x + \nabla\varphi(x).
\end{equation}
Note that \eqref{eqn: proj psi} can be rewritten as
\begin{align}\label{eq:order2}
x - x' = - (\nabla\varphi(x) - \nabla\psi_{\varepsilon}(x')).
\end{align}
Combining this with Lemma~\ref{lem: minty}.(iii) (applied with $\phi=\psi_\varepsilon$ and $y=\nabla\phi(x)$) we obtain
\begin{align*}
\|x - x'\|^2 &= \|  \nabla\psi_{\varepsilon}(x')-\nabla\varphi(x)\|^2 \\
&\le \psi_\varepsilon(x)+ \psi_\varepsilon^*(\nabla \varphi(x)) -\langle x, \nabla \varphi(x)\rangle
& \text{(by \eqref{eq:key})} \\
&\le \alpha(\varepsilon).
& \text{(by \eqref{eq:alpha})}
\end{align*}
Using \eqref{eq:order2} again, the Euclidean distance between $(x, \nabla\varphi(x))$ and $(x', \nabla\psi_{\varepsilon}(x'))$ is bounded by
\begin{align}
\sqrt{\|x - x'\|^2 + \|\nabla\varphi(x) - \nabla\psi_{\varepsilon}(x')\|^2}
= \sqrt{2}\|x - x'\| 
\le \sqrt{2\alpha(\varepsilon)}.
\label{eqn: euclidean distance tmp}
\end{align}
In conclusion, for any $x \in \spt \mu$, there exists some point $x' \in \bbR^d$ such that $x +  \nabla\varphi(x) = x' + \nabla\psi_{\varepsilon}(x')$, and the Euclidean distance between $(x, \nabla\varphi(x))$ and $(x', \nabla\psi_{\varepsilon}(x'))$ is at most $\sqrt{2\alpha(\varepsilon)}$.

\vspace{1em}
\noindent {\em Step II: finding contradiction with convexity of $\psi_\varepsilon$.} 
Define 
\begin{align}\label{eq:choice_r}
r := 160(L+1)\sqrt{\alpha(\varepsilon) + \delta(\varepsilon)}.
\end{align}
It suffices to show that for any $(x, y) \in \spt \pi_{\varepsilon}$ with $d(x, \partial \spt \mu) > r$ we have $\| y - \nabla\varphi(x) \| \le r$.
Suppose to the contrary that 
\begin{align}\label{eq:contradiction}
\| y - \nabla\varphi(x) \| > r
\end{align}
for some $(x,y)\in \spt\pi_\epsilon$ with $d(x, \partial \spt \mu)>r.$ 
By Lemma~\ref{lem: minty}.(i) applied with $\phi= \psi_{\varepsilon}$, there exists $x'\in \mathbb{R}^d$ such that 
$x + y
= x' + \nabla\psi_{\varepsilon}(x')$. Consequently, $x - x' = - (y - \nabla \psi_{\varepsilon}(x'))$. 
Taking this into account, we may invoke Lemma~\ref{lem: minty}.(ii-iii) (applied with $\phi=\psi_{\varepsilon}$) and \eqref{eqn: ub by psi} in Lemma~\ref{lem: approx conj} to obtain (using a similar computation as in Step I)
\begin{align}\label{eqn: x_psi bound}
\begin{split}
\| x - x' \|^2 &= \| y - \nabla \psi_{\varepsilon}(x') \|^2 \\
&\le  \psi_\varepsilon(x)+ \psi_\varepsilon^*(y) -\langle x,y\rangle\\
&\le 12\delta(\varepsilon). 
\end{split}
\end{align}
Next we define
\begin{align}\label{eq: x'}
\vec{x} \coloneqq x + \frac{y - \nabla \varphi(x)}{8(L+1)\| y - \nabla \varphi(x)\|} r
\end{align}
and note that 
\begin{align}\label{eq:distance}
\|x-\vec{x}\|= \frac{r}{8(L+1)}<r. 
\end{align}
By the assumption that $d(x, \partial \spt\mu) > r$, $x \in \spt\mu$ and \eqref{eq:distance} we have $\vec{x} \in \interior \spt \mu$. By Step I there is some $\vec{x}'$ such that the Euclidean distance between $(\vec{x}, \nabla\varphi(\vec{x}))$ and $(\vec{x}', \nabla\psi_{\varepsilon}(\vec{x}'))$ is at most $\sqrt{2\alpha(\varepsilon)}$. We now show that this contradicts convexity of $\psi_{\varepsilon}$, in particular the monotonicity property
\begin{align}\label{eq:contradict}
\langle \vec{x}' - x', \nabla \psi_{\varepsilon}(\vec{x}') - \nabla \psi_{\varepsilon}(x') \rangle \ge 0.
\end{align}
For this we first observe that $$r = 160(L+1)\sqrt{\alpha(\varepsilon) + \delta(\varepsilon)}\ge 160 (L+1) \sqrt{\alpha(\varepsilon)}$$ by \eqref{eq:choice_r}. Thus
\begin{align*}
\| (\vec{x}' - x') - (\vec{x} - x) \| &\le \| \vec{x}' - \vec{x} \| + \| x - x' \| \\
&\le \sqrt{2\alpha(\epsilon)}+ \sqrt{12 \delta(\epsilon)}&\text{(by Step I}, \, \eqref{eqn: x_psi bound})\\
& \le \sqrt{24(\alpha(\varepsilon) + \delta(\varepsilon))} \\
&\le \frac{r}{32(L+1)} &\text{(by \eqref{eq:choice_r})}\\
&= \frac{\|\vec{x} - x\|}{4} &\text{(by \eqref{eq:distance})}
\end{align*}
and
\begin{align*}
&\| \big( \nabla \psi_{\varepsilon}(\vec{x}') - \nabla \psi_{\varepsilon}(x') \big) - (\nabla\varphi(x) - y) \|\\
&= \left\| - \nabla \psi_{\varepsilon}(x') +y +\nabla \psi_{\varepsilon}(\vec{x}') - \nabla\varphi(\vec{x}') +\nabla\varphi(\vec{x}') - \nabla\varphi(x) \right\|\\
& \le \| \nabla \psi_{\varepsilon}(x') - y \| + \| \nabla \psi_{\varepsilon}(\vec{x}') - \nabla\varphi(\vec{x}) \| + \| \nabla\varphi(\vec{x}) - \nabla\varphi(x) \|
\\
& \le 2\sqrt{\alpha(\varepsilon)} + L \| \vec{x} - x \| &\text{(by \eqref{eqn: x_psi bound},  Step I, $\nabla \varphi$  $L$-Lipschitz)}\\
& \le 2\sqrt{\alpha(\varepsilon)} + \frac{r}{8} &\text{(by \eqref{eq:distance})}
\\
& \le \frac{r}{4} &\text{(by \eqref{eq:choice_r})}\\
&\le \frac{\| y - \nabla \varphi(x)\|}{4}. &\text{(by }\eqref{eq:contradiction})
\end{align*}
Using the two last inequalities together with the Cauchy-Schwarz inequality, we finally obtain
\begin{align*}
& \langle \vec{x}' - x', \nabla \psi_{\varepsilon}(\vec{x}') - \nabla \psi_{\varepsilon}(x') \rangle 
\\
& \quad = \left\langle  (\vec{x}- x) + [(\vec{x}' - x') - (\vec{x} - x)], ~
(\nabla\varphi(x) - y) + \big[\big( \nabla \psi_{\varepsilon}(\vec{x}') - \nabla \psi_{\varepsilon}(x') \big) - (\nabla \varphi(x) - y)\big]\right\rangle 
\\
& \quad
\le \langle \vec{x} - x, \nabla\varphi(x) - y \rangle 
+ \frac{\|\vec{x} - x\|}{4} \|y - \nabla\varphi(x)\| + \|\vec{x} - x\| \frac{\|y - \nabla\varphi(x)\|}{4} + \frac{\|\vec{x} - x\|}{4} \frac{\|y - \nabla\varphi(x)\|}{4}
\\
& \quad = -\frac{7\| y - \nabla\varphi(x)\| r}{64(L+1)}
\\
& \quad < 0,
\end{align*}
where we have used $$\langle \vec{x} - x, \nabla\varphi(x) - y \rangle = \Big\langle \frac{y-\nabla \varphi (x)}{8(L+1) \|y-\nabla \varphi(x)\|} r, \nabla \varphi(x) -y\Big \rangle = - \frac{\|y-\nabla \varphi(x)\|}{8(L+1)}r  $$
and \eqref{eq:distance}
for the last equality. This leads to a contradiction, as claimed.
\end{proof}

With Lemma~\ref{lem: discrepancy ub to support ub} in hand we can finally give the proof of Theorem~\ref{thm: general ub}. 

\begin{proof}[Proof of Theorem~\ref{thm: general ub}]
Recall $\alpha(\varepsilon)$ from \eqref{eq:alpha}. By Lemma~\ref{lem: discrepancy ub} we have
\[
\alpha(\varepsilon) \le C(L+1) \delta\!\left(\frac{\delta(\varepsilon)}{L+1}\right).
\]
Plugging this and the bound $\delta(\varepsilon) \le C(L+1) \delta(\delta(\varepsilon)/(L+1))$, as derived in \eqref{eq:epsilon_easy}, into Lemma~\ref{lem: discrepancy ub to support ub} immediately yields the desired conclusion.
\end{proof}

\subsection{Boundary behavior for star-shaped support}

\begin{proof}[Proof of Theorem~\ref{thm: bdry ub}]
Recall $$\alpha(\varepsilon) = \sup_{x \in \spt\mu} [ \psi_{\varepsilon}(x) + \psi_{\varepsilon}^*(\nabla\varphi(x)) - \langle x, \nabla\varphi(x) \rangle ]$$ from  Lemma~\ref{lem: discrepancy ub to support ub}. By definition of the convex conjugate $\psi_\epsilon^*$ we have for any $\overline x, \underline{x} \in \spt\mu$
\begin{align}
\begin{split}
\psi_{\varepsilon}(\overline{x}) 
&= \langle \overline{x}, \nabla \varphi(\underline{x}) \rangle - (\langle \overline{x}, \nabla \varphi(\underline{x}) \rangle- \psi_{\varepsilon}(\overline{x}))\\
& \ge \langle \overline{x}, \nabla \varphi(\underline{x}) \rangle - \psi_{\varepsilon}^*(\nabla \varphi(\underline{x}))
\\
& = \psi_{\varepsilon}(\underline{x}) + \langle \nabla \varphi(\underline{x}), \overline{x} - \underline{x} \rangle - \big( \psi_{\varepsilon}(\underline{x}) + \psi_{\varepsilon}^*(\nabla \varphi(\underline{x})) - \langle \underline{x}, \nabla \varphi(\underline{x}) \rangle \big)
\\
& \ge \psi_{\varepsilon}(\underline{x}) + \langle \nabla \varphi(\underline{x}), \overline{x} - \underline{x} \rangle - \alpha(\varepsilon).
\end{split}
\label{eqn: approx grad}
\end{align}
Fix a point $x \in \spt \mu$.
Since $\spt\mu$ is star-shaped, there exists a positive integer $K \le \lceil 4/r \rceil$, such that for any $x' \in \spt \mu\subseteq B(0,1)$ and for any $r>0$ we can find points $x_0, x_1, \dots, x_K \in \spt \mu$ with $x_0=x', x_K = x$, $\|x_{k+1} - x_k\| \le r$ for all $k=0, 1, \cdots, K-1$. 
Applying~\eqref{eqn: approx grad} with $\overline{x}=x_{k+1}, \underline{x}=x_k$ we have
\[
\psi_{\varepsilon}(x_{k+1}) - \psi_{\varepsilon}(x_k) \ge \langle \nabla\varphi(x_k), x_{k+1} - x_k \rangle - \alpha(\varepsilon).
\]
Summing the above inequality for $k=0, 1, \cdots, K-1$, we obtain
\begin{equation}
\label{eqn: grad sum}
\psi_{\varepsilon}(x) - \psi_{\varepsilon}(x') \ge \sum_{k=0}^{K-1} \langle \nabla \varphi(x_k), x_{k+1} - x_k \rangle - K \alpha(\varepsilon).
\end{equation}
On the other hand, since $\nabla\varphi$ is $L$-Lipschitz, it follows from a first-order Taylor expansion that
\[
\langle \nabla \varphi(x_k), x_{k+1} - x_k \rangle 
\ge \varphi(x_{k+1}) - \varphi(x_k) - L\|x_{k+1} - x_k\|^2 \ge \varphi(x_{k+1}) - \varphi(x_k) - Lr^2,
\]
where we have used $\|x_{k+1} - x_k\| \le r$ for the last inequality. Plugging this into~\eqref{eqn: grad sum}, we obtain
\[
\psi_{\varepsilon}(x) - \psi_{\varepsilon}(x') \ge \varphi(x) - \varphi(x') - LKr^2 - K\alpha(\varepsilon).
\]
Since $x, x' \in \spt\mu$ were arbitrary, we  deduce that
\begin{align*}
\varphi(x) + \varphi^*(y) - \langle x, y \rangle 
&= \varphi(x) + \sup_{x'\in \spt\mu} [\langle x',y\rangle -\varphi(x')]  - \langle x, y \rangle \\
&\le \psi_{\varepsilon}(x) + \sup_{x'\in \spt\mu} [\langle x',y\rangle -\psi_\varepsilon(x')]  - \langle x, y \rangle + LKr^2 + K\alpha(\varepsilon)\\
&= \psi_{\varepsilon}(x) +\psi_{\varepsilon}'(y)  - \langle x, y \rangle + LKr^2 + K\alpha(\varepsilon)\\
& \le 
\psi_{\varepsilon}(x) + \psi_{\varepsilon}^*(y) - \langle x, y \rangle  + \sup_{y\in\spt\nu} |\psi_{\varepsilon}^*(y) - \psi_{\varepsilon}'(y)| + LKr^2 + K\alpha(\varepsilon)
\\
& \le \psi_{\varepsilon}(x) + \psi_{\varepsilon}^*(y) - \langle x, y \rangle + C\delta(\varepsilon) + LKr^2 + K\alpha(\varepsilon)
\end{align*}
holds for all $(x, y)\in \spt \pi_\epsilon,$
where we have used $\partial \varphi^*(y)\in \spt \mu$ for the first equality, and Corollary \ref{cor:bound} for the last inequality.
Recalling \eqref{eqn: ub by psi} in Lemma~\ref{lem: approx conj} we conclude that
\[
\varphi(x) + \varphi^*(y) - \langle x, y \rangle 
\le C\delta(\varepsilon) + LKr^2 + K\alpha(\varepsilon)\qquad  \forall (x, y) \in \spt\pi_\varepsilon.
\]
On the other hand we have
\begin{align*}
\varphi(x) + \varphi^*(y) - \langle x, y \rangle &\ge
 \langle \nabla \varphi(x), x\rangle - \varphi^*(\nabla \varphi(x))+ \varphi^*(y) - \langle x, y \rangle \\
&= \varphi^*(y) -  \varphi^*(\nabla \varphi(x)) - \langle y-\nabla \varphi(x), x\rangle \\
&\ge \frac{1}{2L} \|y - \nabla\varphi(x)\|^2,
\end{align*}
where the first inequality follows from $(\varphi^*)^*=\varphi$ and the second inequality follows from the fact that $\varphi^*$ is $1/L$-strongly convex and $x\in \partial \varphi^*(\nabla\varphi(x)).$
Combining these inequalities,
\[
\| y - \nabla\varphi(x) \| \le \sqrt{2L[C\delta(\varepsilon) + Kr^2 + K\alpha(\varepsilon)]}.
\]
To prove the theorem, it remains to set $r=\sqrt{\alpha(\varepsilon)}$, noting that
\[
Kr^2 + K\alpha(\varepsilon) =2K \alpha(\varepsilon) 
\le \begin{cases}
    C\sqrt{\alpha(\varepsilon)}, & \alpha(\varepsilon) \le 1, \\
    C\alpha(\varepsilon), & \alpha(\varepsilon) > 1,
\end{cases}
\]
since $K\le \lceil 4/r\rceil \le \begin{cases}
    8/r, & r \le 1, \\ 4, & r > 1.
\end{cases}$
\end{proof}

\begin{proof}[Proof of Corollary \ref{Cor: bdr ub}]
Note that every pair of points in $\spt \mu$ can be joined by a shortest path in $\spt\mu$ since $\interior\spt\mu$ is  connected, and $\ell(\mu)$ is finite since $\spt\mu$ is compact. Going through the proof of Theorem~\ref{thm: bdry ub} we only need to replace $K\le \lceil 4/r \rceil$ by $K \le \lceil \ell(\mu) / r\rceil$ and hence $C$ by $C\sqrt{\ell(\mu)}$.
\end{proof}

\bibliographystyle{plain}
\bibliography{refs}

\end{document}